\documentclass{amsart}
\usepackage{amssymb}
\usepackage{amsfonts}

\setcounter{MaxMatrixCols}{10}

\newtheorem{theorem}{Theorem}
\theoremstyle{plain}

\newtheorem{corollary}{Corollary}

\newtheorem{definition}{Definition}
\newtheorem{example}{Example}

\newtheorem{lemma}{Lemma}

\newtheorem{remark}{Remark}

\numberwithin{equation}{section}
\input{tcilatex}

\begin{document}
\title{A new generalization of fuzzy ideals in LA-semigroups}
\author{S. Abdullah}
\address{Department of mathematics, Quaid-i-Azam university, Islamabad,
Pakistan}
\email{saleemabdullah81@yahoo.com}
\author{M Atique Khan}
\address{Department of mathematics, Quaid-i-Azam university, Islamabad,
Pakistan}
\email{atiquekhan\_16@yahoo.com}
\author{M. Aslam}
\address{Department of mathematics, Quaid-i-Azam university, Islamabad,
Pakistan}
\email{draslamqau@yahoo.com}
\keywords{LA-semigroup, $\left( \in _{\gamma },\in _{\gamma }\vee q_{\delta
}\right) $-fuzzy LA-subsemigroup, $\left( \in _{\gamma },\in _{\gamma }\vee
q_{\delta }\right) $-fuzzy left(right) ideals, $\left( \in _{\gamma },\in
_{\gamma }\vee q_{\delta }\right) $-fuzzy bi-ideals, $\left( \in _{\gamma
},\in _{\gamma }\vee q_{\delta }\right) $-generalized bi-ideals}

\begin{abstract}
In this article, the concept of $\ \left( \in _{\gamma },\in _{\gamma }\vee
q_{\delta }\right) $-fuzzy LA-subsemigroups, $\left( \in _{\gamma },\in
_{\gamma }\vee q_{\delta }\right) $-fuzzy left(right) ideals, $\left( \in
_{\gamma },\in _{\gamma }\vee q_{\delta }\right) $-fuzzy generalized
bi-ideals and $\left( \in _{\gamma },\in _{\gamma }\vee q_{\delta }\right) $%
-fuzzy bi-ideals of an LA-semigroup are introduced. The given concept is a
generalization of $\ \left( \in ,\in \vee q\right) $-fuzzy LA-subsemigroups, 
$\left( \in ,\in \vee q\right) $-fuzzy left(right) ideals, $\left( \in ,\in
\vee q\right) $-fuzzy generalized bi-ideals and $\left( \in ,\in \vee
q\right) $-fuzzy bi-ideals of an LA-semigroup. We also give some examples of 
$\left( \in _{\gamma },\in _{\gamma }\vee q_{\delta }\right) $-fuzzy
LA-subsemigroups ( left, right, generalized bi- and bi) ideals of an
LA-semigroup. we prove some fundamental results of these ideals. We
characterize $\left( \in _{\gamma },\in _{\gamma }\vee q_{\delta }\right) $%
-fuzzy left(right) ideals, $\left( \in _{\gamma },\in _{\gamma }\vee
q_{\delta }\right) $-fuzzy generalized bi-ideals and $\left( \in _{\gamma
},\in _{\gamma }\vee q_{\delta }\right) $-fuzzy bi-ideals of an LA-semigroup
by the properties of level sets. The given concept is a generalization of $\
\left( \in ,\in \vee q\right) $-fuzzy LA-subsemigroups, $\left( \in ,\in
\vee q\right) $-fuzzy left(right) ideals, $\left( \in ,\in \vee q\right) $%
-fuzzy generalized bi-ideals and $\left( \in ,\in \vee q\right) $-fuzzy
bi-ideals of an LA-semigroup. We also give some examples of $\left( \in
_{\gamma },\in _{\gamma }\vee q_{\delta }\right) $-fuzzy LA-subsemigroups (
left, right, generalized bi- and bi) ideals of an LA-semigroup.
\end{abstract}

\maketitle

\section{Introduction}

The concept of LA-semigroup was first indroduced by Kazim and Naseerudin 
\cite{qw,MN}. A non-empty set $S$ with binary operation $\ast $ is said to
be an LA-semigroup if identity $(x\ast y)\ast z=(z\ast y)\ast x$ for all $%
x,y,z\in S$ holds. Later, Q. Mushtaq and others have been investigated the
structure further and added many useful results to theory of LA-semigroups
see \cite{QMK,QMY,QMMY}. Ideals of LA-semigroups were defined by Mushtaq and
Khan in his paper \cite{MM}. In \cite{MNA}, Khan and Ahmad characterized
LA-semigroup by their ideals. L. A. Zadeh introduced the fundamental concept
of a fuzzy set \cite{1} in 1965. On the basis of this concept,
mathematicians initiated a natural framework for generalizing some basic
notions of algebra, e.g group theory, set theory, ring theory, topology,
measure theory and semigroup theory etc. The importance of fuzzy technology
in information processing is increasing day by day. In granular computing,
the information is represented in the form of aggregates, called granules.
Fuzzy logic is very useful in modeling the granules as fuzzy sets. Bargeila
and Pedrycz considered this new computing methodology in \cite{22}. Pedrycz
and Gomide in \cite{23} considered the presentation of update trends in
fuzzy set theory and it's applications. Foundations of fuzzy groups are laid
by Rosenfeld in \cite{2}. The theory of fuzzy semigroups was initiated by
Kuroki in his papers \cite{3,4}. Recently, Khan and Khan introduced fuzzy
ideals in LA-semigroups \cite{Fuzz}. In \cite{7} Murali gave the concept of
belongingness of a fuzzy point to a fuzzy subset under a natural equivalence
on a fuzzy subset. In \cite{8} the idea of quasi-coincidence of a fuzzy
point with a fuzzy set is defined. These two ideas played a vital role in
generating some different types of fuzzy subgroups. Using these ideas Bhakat
and Das \cite{9,10} gave the concept of $\left( \alpha ,\beta \right) $%
-fuzzy subgroups, where $\alpha ,\beta \in \{\in ,q,\in \vee q,\in \wedge q\}
$ and $\alpha \neq \in \wedge q$. These fuzzy subgroups are further studied
in \cite{11,12}. The concept of $\left( \in ,\in \vee q\right) $-fuzzy
subgroups is a viable generalization of Rosenfeld's fuzzy subgroups, $\left(
\in ,\in \vee q\right) $-fuzzy subrings and ideals are defined In \cite{13},
S.K. Bhakat and P. Das introduced the $\left( \in ,\in \vee q\right) $-fuzzy
subrings and ideals. Davvaz gave the concept of $\left( \in ,\in \vee
q\right) $-fuzzy subnearrings and ideals of a near ring in \cite{14}. Jun
and Song initiated the study of $\left( \alpha ,\beta \right) $-fuzzy
interior ideals of a semigroup in \cite{15}. In \cite{16} Kazanci and Yamak
studied $\left( \in ,\in \vee q\right) $-fuzzy bi-ideals of a semigroup. In 
\cite{17} regular semigroups are characterized by the properties of $\left(
\in ,\in \vee q\right) $-fuzzy ideals. Aslam et al defined generalized fuzzy 
$\Gamma $-ideals in $\Gamma $-LA-semigroups \cite{222}. In \cite{221},
Abdullah et al give new generalization of fuzzy normal subgroup and fuzzy
coset of groups. Generalizing the idea of the quasi-coincident of a fuzzy
point with a fuzzy subset Jun \cite{19} defined $\left( \in ,\in \vee
q_{k}\right) $-fuzzy subalgebras in BCK/BCI-algebras. In \cite{26}, $\left(
\in ,\in \vee q_{k}\right) $-fuzzy ideals of semigroups are introduced.
Further generalizing the concept, $\left( \in ,\in \vee q_{k}\right) $, J.
Zhan and Y. Yin defined $\left( \in _{\gamma },\in _{\gamma }\vee q_{\delta
}\right) $-fuzzy ideals of near rings \cite{25}. In \cite{24}, $\left( \in
_{\gamma },\in _{\gamma }\vee q_{\delta }\right) $-fuzzy ideals of
BCI-algebras are introduced.

In this article, the concept of $\ \left( \in _{\gamma },\in _{\gamma }\vee
q_{\delta }\right) $-fuzzy LA-subsemigroups, $\left( \in _{\gamma },\in
_{\gamma }\vee q_{\delta }\right) $-fuzzy left(right) ideals, $\left( \in
_{\gamma },\in _{\gamma }\vee q_{\delta }\right) $-fuzzy generalized
bi-ideals and $\left( \in _{\gamma },\in _{\gamma }\vee q_{\delta }\right) $%
-fuzzy bi-ideals of an LA-semigroup are introduced. The given concept is a
generalization of $\ \left( \in ,\in \vee q\right) $-fuzzy LA-subsemigroups, 
$\left( \in ,\in \vee q\right) $-fuzzy left(right) ideals, $\left( \in ,\in
\vee q\right) $-fuzzy generalized bi-ideals and $\left( \in ,\in \vee
q\right) $-fuzzy bi-ideals of an LA-semigroup. We also give some examples of 
$\left( \in _{\gamma },\in _{\gamma }\vee q_{\delta }\right) $-fuzzy
LA-subsemigroups ( left, right, generalized bi- and bi) ideals of an
LA-semigroup. we prove some fundamental results of these ideals. We
characterize $\left( \in _{\gamma },\in _{\gamma }\vee q_{\delta }\right) $%
-fuzzy left(right) ideals, $\left( \in _{\gamma },\in _{\gamma }\vee
q_{\delta }\right) $-fuzzy generalized bi-ideals and $\left( \in _{\gamma
},\in _{\gamma }\vee q_{\delta }\right) $-fuzzy bi-ideals of an LA-semigroup
by the properties of level sets. We prove that for $\mu $ be a fuzzy subset
of $S$. Then, the following hold (i) $\mu $ is an $\left( \in _{\gamma },\in
_{\gamma }\vee q_{\delta }\right) $-fuzzy ideal of $S$ if and only if $\ \mu
_{r}^{\gamma }(\neq \phi )$ is an ideal of $S$ for all $r$ $\in (\gamma
,\delta ].$ (ii) If $2\delta =1+\gamma ,$ then $\mu $ is an $\left( \in
_{\gamma },\in _{\gamma }\vee q_{\delta }\right) $-fuzzy ideal of of $S$ if
and only if $\ \mu _{r}^{\delta }(\neq \phi )$ is an ideal of $S$ for all $r$
$\in (\delta ,1].$ (iii) If $2\delta =1+\gamma ,$ then $\mu $ is an $\left(
\in _{\gamma },\in _{\gamma }\vee q_{\delta }\right) $-fuzzy ideal of of $S$
if and only if $\ [\mu ]_{r}^{\delta }(\neq \phi )$ is an ideal of $S$ for
all $r$ $\in (\gamma ,1].$ (iv) $\mu $ is an $\left( \in _{\gamma },\in
_{\gamma }\vee q_{\delta }\right) $-fuzzy ideal of $S$ if and only if $\
U(\mu ;r)(\neq \phi )$ is an ideal of $S$ for all $r$ $\in (\gamma ,\delta
]. $ Similarly, we prove that for $\left( \in _{\gamma },\in _{\gamma }\vee
q_{\delta }\right) $-fuzzy generalized bi-ideals, $\left( \in _{\gamma },\in
_{\gamma }\vee q_{\delta }\right) $-fuzzy bi-ideals and $\left( \in _{\gamma
},\in _{\gamma }\vee q_{\delta }\right) $-fuzzy quasi-ideals of an
LA-semigroup.

\section{Preliminaries}

An LA-subsemigroup of $S$ \ means a non-empty subset $A$ of $S$ such that $%
A^{2}\subseteq A.$ By a left (right) ideals of $S$ we mean a non-empty
subset $I$ of $S$ such that $SI\subseteq I(IS\subseteq I).$ An ideal $I$ is
said to be two sided or simply ideal if it is both left and right ideal. An
LA-subsemigroup $A$ is called bi-ideal if $\left( BS\right) B\subseteq A.$ A
non-empty subset $B$ is called generalized bi-ideal if $\left( BS\right)
B\subseteq A.$ A non-empty subset $Q$ is called a quasi-ideal if $QS\cap
SQ\subseteq Q.$ A non-empty subset $A$ is called interior ideal if it is
LA-subsemigroup of $S$ and $\left( SA\right) S\subseteq A.$ An LA-semigroup $%
S$ is called regular if for each $a\in S$ there exists $x\in S$ such that $%
a=(ax)a.$ An LA-semigroup $S$ is called intra-regular if for each $a\in S$
there exist $x,y\in S$ such that $a=(xa^{2})y.$ In an LA-semigroup $S,$ the
following law hold, (1) $\left( ab\right) c=\left( ab\right) c,$ for all $%
a,b,c\in S.$ $\left( 2\right) $ $\left( ab\right) \left( cd\right) =\left(
ac\right) \left( bd\right) ,$ for all $a,b,c,d\in S.$ If an LA-semigroup $S$
has a left identity $e,$ then the following law holds, (3) $\left( ab\right)
\left( cd\right) =\left( db\right) \left( ca\right) ,$ for all $a,b,c,d\in
S. $ (4) $a(bc)=b(ac),$ for all $a,b,c\in S.$

A fuzzy subset $\mu $ of the form%
\begin{equation*}
\mu (y)=\left\{ 
\begin{array}{c}
t(\neq 0)\text{ if }y=x \\ 
0\text{ \ \ \ \ \ \ \ if }y\neq x\text{\ }%
\end{array}%
\right.
\end{equation*}%
is said to be a fuzzy point with support $x$ and value $t$ and is denoted by 
$x_{t}$.A fuzzy point $x_{t}$ is said to be "belong
to"(res.,"quasicoincident with") a fuzzy set $\mu ,$ written as $x_{t}\in
\mu ($repectively$,x_{t}q\mu )$ if $\mu (x)\geq t$ $($repectively, $\mu
(x)+t>1).$ We write $x_{t}\in \vee q\mu $ if $x_{t}\in \mu $ or $x_{t}q\mu .$%
If $\mu (x)<t($respectively, $\mu (x)+t\leq 1),$ then we write $x_{t}%
\overline{\in }\mu ($repectively,$x_{t}\overline{q}\mu ).$ We note that $%
\overline{\in \vee q}$ means that $\in \vee q$ does not hold. Generalizing
the concept of $x_{t}q\mu ,$ Y. B. Jun \cite{19} defined $x_{t}q_{k}\mu ,$
where $k\in \lbrack 0,1]$ as $x_{t}q_{k}\mu $ if $\mu (x)+t+\dot{k}>1$ and $%
x_{t}\in \vee q_{k}\mu $ if $x_{t}\in \mu $ or $x_{t}q_{k}\mu .$

A fuzzy subset $\mu $ of $S$ is a function $\mu :S\rightarrow \lbrack 0,1].$
For any two fuzzy subsets $\mu $ and $\nu $ of $S,$ $\mu \subseteq \nu $
means $\mu (x)\leq \nu (x)$ for all $x$ in $S.$ The fuzzy subsets $\mu \cap
\nu $ and $\mu \cup \nu $ of $S$ are defined as 
\begin{eqnarray*}
\left( \mu \cap \nu \right) (x) &=&\min \{\mu (x),\nu (x)\}=\mu (x)\wedge
\nu (x) \\
\left( \mu \cup \nu \right) (x) &=&\max \{\mu (x),\nu (x)\}=\mu (x)\vee \nu
(x)
\end{eqnarray*}%
for all $x$ in $S.$ If $\{\mu _{i}\}_{i\in I}$ is a faimly of fuzzy subsets
\ of $S,$ then $\tbigwedge\limits_{i\in I}\mu _{i}$ and $\bigvee\limits_{i%
\in I}\mu _{i}$ are fuzzy subsets of $S$ defined by%
\begin{eqnarray*}
\left( \tbigwedge\limits_{i\in I}\mu _{i}\right) (x) &=&\min \{\mu
_{i}\}_{i\in I} \\
\left( \bigvee\limits_{i\in I}\mu _{i}\right) (x) &=&\max \{\mu _{i}\}_{i\in
I}
\end{eqnarray*}%
For any two subsets $\mu $ and $\nu $ of $S,$ the product $\mu \circ \nu $
is defined as%
\begin{equation*}
\left( \mu \circ \nu \right) (x)=\QDATOPD\{ . {\bigvee\limits_{x=yz}\{\mu
(y)\wedge \nu (z)\text{, if there exist }y,z\in S,\text{ such that }x=yz}{0%
\text{ \ \ \ \ \ \ \ \ \ \ \ \ \ \ \ \ \ \ \ \ \ \ \ \ \ \ \ \ \ \ \ \ \ \ \
\ \ \ \ \ \ \ \ \ \ \ \ \ \ \ \ \ \ \ \ \ \ \ \ otherwise \ \ \ \ \ \ \ \ \
\ }}
\end{equation*}

\begin{definition}
\cite{Fuzz} A fuzzy subset $\mu $ of an LA-semigroup $S$ is called fuzzy
LA-subsemigroup $S$ if $\mu (xy)\geq \mu (x)\wedge \mu (y)$ for all $x,y\in
S.$
\end{definition}

\begin{definition}
\cite{Fuzz} A fuzzy subset $\mu $ of an LA-semigroup $S$ is called fuzzy
left(right) ideal of $S$ if $\mu (xy)\geq \mu (y)(\mu (xy)\geq \mu (x))$ for
all $x,y\in S.$
\end{definition}

\begin{definition}
\cite{Fuzz} An LA-subsemigroup $\mu $ of an LA-semigroup $S$ is called fuzzy
bi-ideal of $S$ if $\mu (\left( xy\right) z)\geq \mu (x)\wedge \mu (z)$ for
all $x,y\in S.$
\end{definition}

\begin{definition}
\cite{Fuzz} A fuzzy subset $\mu $ of an LA-semigroup $S$ is called fuzzy
generalized bi-ideal of $S$ if $\mu (\left( xy\right) z)\geq \mu (x)\wedge
\mu (z)$ for all $x,y\in S.$
\end{definition}

\begin{definition}
\cite{Fuzz}Let $\mu $ be a fuzzy subset of an LA-semigroup $S,$ then for all 
$t\in (0,1],$ the set $\mu _{t}=\{x\in S\left\vert \mu (x)\geq t\right. \}$
is called a level subset of $S.$
\end{definition}

\section{$\left( \in _{\protect\gamma },\in _{\protect\gamma }\vee q_{%
\protect\delta }\right) $-FUZZY IDEALS}

Generalizing the notion of $\left( \in ,\in \vee q\right) ,$ in \cite{24,25} 
$\left( \in _{\gamma },\in _{\gamma }\vee q_{\delta }\right) $-fuzzy ideals
of near rings and BCI-algebras are introduced. Let $\gamma ,\delta \in
\lbrack 0,1]$ be such that $\gamma <\delta .$ For fuzzy point $x_{t}$ and
fuzzy subset $\mu $ of $S,$ we say

$(i)$\ $x_{t}\in _{\gamma }\mu $ if $\mu (x)\geq t>\gamma .$

$(ii)\ x_{t}q_{\delta }\mu $ \ if $\mu (x)+t>2\delta .$

$(iii)\ x_{t}\in _{\gamma }\vee q_{\delta }\mu $ if $x_{t}\in _{\gamma }\mu $
or $x_{t}q_{\delta }\mu .$

$(iv)\ x_{t}\overline{\in }_{\gamma }\vee \overline{q}_{\delta }\mu $ if $%
x_{t}\overline{\in }_{\gamma }\mu $ or $x_{t}\overline{q}_{\delta }\mu $.

In this section, we introduce the concept of $\left( \in _{\gamma },\in
_{\gamma }\vee q_{\delta }\right) $-fuzzy LA-subsemigroup, $\left( \in
_{\gamma },\in _{\gamma }\vee q_{\delta }\right) $-fuzzy left(right) ideal, $%
\left( \in _{\gamma },\in _{\gamma }\vee q_{\delta }\right) $-fuzzy
generalized bi-ideal and $\left( \in _{\gamma },\in _{\gamma }\vee q_{\delta
}\right) $-fuzzy bi-ideal of an LA-semigroup $S$. We also study some basic
properties of these ideals.

\begin{definition}
A fuzzy subset $\mu $ of an LA-semigroup $S$ is called an $\left( \in
_{\gamma },\in _{\gamma }\vee q_{\delta }\right) $-fuzzy LA-subsemigroup of $%
S$ if for all $a,b\in S$ and $t,r\in (\gamma ,1]$, $\ x_{t},y_{r}\in
_{\gamma }\mu $ implies that $\left( ab\right) _{t\wedge r}\in _{\gamma
}\vee q_{\delta }\mu .$
\end{definition}

\begin{remark}
Every fuzzy LA-subsemigroup and every $\left( \in ,\in \vee q\right) $-fuzzy
LA-subsemigroup is an $\left( \in _{\gamma },\in _{\gamma }\vee q_{\delta
}\right) $-fuzzy LA-subsemigroup but the converse is not true.
\end{remark}

\begin{example}
Let $S=\{1,2,3,4\}$ be an LA -semigroup with the multiplication defined by
the following Caley table

\begin{equation*}
\begin{tabular}{l|llll}
${\ast }$ & $1$ & $2$ & $3$ & $4$ \\ \hline
$1$ & $4$ & $3$ & $3$ & $3$ \\ 
$2$ & $3$ & $3$ & $4$ & $4$ \\ 
$3$ & $3$ & $3$ & $3$ & $3$ \\ 
$4$ & $3$ & $3$ & $3$ & $3$%
\end{tabular}%
\end{equation*}%
Define fuzzy subset $\mu $ of $S$ by

$\mu (1)=0.6,$ $\mu (2)=0.9,$ $\mu (3)=0.7,$ $\mu (4)=0.3.$

Then,

$(i)$ \ \ \ \ $\mu $ is an $\left( \in _{0.2},\in _{0.2}\vee q_{0.3}\right) $%
-fuzzy LA-subsemigroup.

$\left( ii\right) $ \ \ \ $\mu $ is not an $\left( \in ,\in \vee q\right) $%
-fuzzy LA-subsemigroup \ because $2_{0.6},3_{0.6}\in \mu ,$ but $\left(
2\ast 3\right) _{0.6}\overline{\in \vee q}\mu .$

$\left( iii\right) $ \ \ $\mu $ is not fuzzy LA-subsemigroup.
\end{example}

The next theorems provide the relationship between $\left( \in _{\gamma
},\in _{\gamma }\vee q_{\delta }\right) $-fuzzy LA-subsemigroups of $S$ and
crisp LA-subsemigroups.

\begin{theorem}
Let $A$ be a non-empty subset of $S.$ Then, $A$ is an LA-subsemigroup of $S$
if and only if the fuzzy subset $\mu $\ of $S$ defined by%
\begin{equation*}
\mu (a)=\QDATOPD\{ . {\geq \delta \text{ if }a\in A}{\leq \gamma \text{ if }%
a\notin A}
\end{equation*}%
\ is an $\left( \in _{\gamma },\in _{\gamma }\vee q_{\delta }\right) $-fuzzy
LA-subsemigroup of $S.$
\end{theorem}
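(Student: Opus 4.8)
The plan is to reduce everything to one elementary observation about the defining inequalities of $\mu$: since $\gamma<\delta$, the two clauses in the definition of $\mu$ are value-disjoint, so for every $x\in S$ one has $\mu(x)>\gamma$ exactly when $x\in A$, and $\mu(x)\le\gamma$ exactly when $x\notin A$. Equivalently, for a fuzzy point $x_t$ with $t\in(\gamma,1]$, the relation $x_t\in_\gamma\mu$ holds iff $x\in A$ and $t\le\mu(x)$. Once this is in place both implications are short.

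For the forward direction, I would assume $A$ is an LA-subsemigroup and take $a,b\in S$ and $t,r\in(\gamma,1]$ with $a_t\in_\gamma\mu$ and $b_r\in_\gamma\mu$. By the observation, $\mu(a)\ge t>\gamma$ forces $a\in A$ and similarly $b\in A$, hence $ab\in A^2\subseteq A$, so $\mu(ab)\ge\delta$. To conclude $(ab)_{t\wedge r}\in_\gamma\vee q_\delta\mu$ I split into two cases. If $t\wedge r\le\delta$, then $\mu(ab)\ge\delta\ge t\wedge r>\gamma$, so $(ab)_{t\wedge r}\in_\gamma\mu$. If $t\wedge r>\delta$, then $\mu(ab)+(t\wedge r)\ge\delta+(t\wedge r)>2\delta$, so $(ab)_{t\wedge r}q_\delta\mu$. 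In either case the required membership holds, so $\mu$ is an $\left(\in_\gamma,\in_\gamma\vee q_\delta\right)$-fuzzy LA-subsemigroup.

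For the converse, I would assume $\mu$ is an $\left(\in_\gamma,\in_\gamma\vee q_\delta\right)$-fuzzy LA-subsemigroup and take $a,b\in A$. Then $\mu(a)\ge\delta$ and $\mu(b)\ge\delta$, and since $\delta\in(\gamma,1]$ this gives $a_\delta\in_\gamma\mu$ and $b_\delta\in_\gamma\mu$; the hypothesis then yields $(ab)_\delta=(ab)_{\delta\wedge\delta}\in_\gamma\vee q_\delta\mu$. If $(ab)_\delta\in_\gamma\mu$ then $\mu(ab)\ge\delta>\gamma$; if $(ab)_\delta q_\delta\mu$ then $\mu(ab)+\delta>2\delta$, again giving $\mu(ab)>\delta>\gamma$. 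Thus $\mu(ab)>\gamma$, which by the opening observation forces $ab\in A$. Hence $A^2\subseteq A$ and $A$ is an LA-subsemigroup.

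No step here is genuinely hard; the only place requiring care is the case analysis in the forward direction, where one must make sure the quasi-coincidence inequality $\mu(ab)+(t\wedge r)>2\delta$ comes out strict — this is precisely where the strict inequalities $t\wedge r>\delta$ (in that case) and $\gamma<\delta$ are used. Everything else is a direct unwinding of the definitions of $\in_\gamma$, $q_\delta$, and $A^2\subseteq A$.
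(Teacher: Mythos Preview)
Your proof is correct. The forward direction is essentially identical to the paper's: both deduce $a,b\in A$ from $a_t,b_r\in_\gamma\mu$, obtain $\mu(ab)\ge\delta$, and then split on whether $t\wedge r\le\delta$ or $t\wedge r>\delta$.

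For the converse there is a small but genuine methodological difference. The paper invokes the inequality characterization $\mu(ab)\vee\gamma\ge\mu(a)\wedge\mu(b)\wedge\delta$ (which is only established later, in Theorem~6) to conclude $\mu(ab)\ge\delta$ and hence $ab\in A$. You instead work directly from the fuzzy-point definition: you plug in the specific level $\delta$, use $a_\delta,b_\delta\in_\gamma\mu$ to get $(ab)_\delta\in_\gamma\vee q_\delta\mu$, and then unwind both disjuncts to obtain $\mu(ab)>\gamma$, which suffices for $ab\in A$. Your route is more self-contained (no forward reference to Theorem~6) and arguably cleaner; the paper's route gives the marginally stronger conclusion $\mu(ab)\ge\delta$, but that extra strength is not needed for the statement at hand.
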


\begin{proof}
Let $A$ be an LA-subsemigroup of $S$ and $a,b\in S$ and $t,r$ $\in (\gamma
,1]$, such that $a_{t},b_{r}\in _{\gamma }\mu .$ Then, $\mu (a)\geq t>\gamma 
$ and $\mu (b)\geq r>\gamma .$ Thus, $\mu (a)\geq \delta $ and $\mu (b)\geq
\delta $. Hence, $a,b\in A.$ Since $A$ is an LA-subsemigroup of $S,$ so we
have $ab\in A,$ which implies that $\mu (ab)\geq \delta .$ If $t\wedge r\leq
\delta ,$ then $\mu (ab)\geq \delta \geq t\wedge r>\gamma $ and so $\left(
ab\right) _{t\wedge r}\in _{\gamma }\vee q_{\delta }\mu .$ If $t\wedge
r>\delta $ then, $\mu (ab)+t\wedge r>\mu (ab)+\delta >\delta +\delta ,$
which implies that $\mu (ab)+t\wedge r>2\delta $ and so $\left( ab\right)
_{t\wedge r}\in _{\gamma }\vee q_{\delta }\mu .$ Hence, $\mu $ is an $\left(
\in _{\gamma },\in _{\gamma }\vee q_{\delta }\right) $-fuzzy LA-subsemigroup
of $S.$

Conversely, assume that $\mu $ is an $\left( \in _{\gamma },\in _{\gamma
}\vee q_{\delta }\right) $-fuzzy LA-subsemigroup of $S.$ Let $a,b\in A,$
then $\mu (a)\geq \delta $ and $\mu (b)\geq \delta .$ By hypothesis, $\mu
(ab)\vee \gamma \geq \mu (a)\wedge \mu (b)\wedge \delta ,$ which implies
that $\mu (ab)\vee \gamma \geq \delta \wedge \delta \wedge \delta =\delta ,$
that is $\mu (ab)\vee \gamma \geq \delta .$ Since $\gamma <\delta ,$
therefore $\mu (ab)\geq \delta .$ Hence, $ab\in A.$ Thus, $A$ is an
LA-subsemigroup of $S.$
\end{proof}

\begin{corollary}
Let $A$ be a non-empty subset of an LA-semigroup $S.$ Then, $A$ is an
LA-subsemigroup of $S$ if and only if $\chi _{A},$ the characteristic
function of $A$ is an $\left( \in _{\gamma },\in _{\gamma }\vee q_{\delta
}\right) $-fuzzy LA-subsemigroup of $S.$
\end{corollary}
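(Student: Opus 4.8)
The plan is to derive this corollary immediately from Theorem 2, since the characteristic function $\chi _{A}$ is nothing but a particular instance of the fuzzy subsets considered there. First I would observe that, because $\gamma ,\delta \in [0,1]$ with $\gamma <\delta \leq 1$, we have $\chi _{A}(a)=1\geq \delta $ whenever $a\in A$ and $\chi _{A}(a)=0\leq \gamma $ whenever $a\notin A$. Hence $\chi _{A}$ is exactly a fuzzy subset of the form $\mu (a)\geq \delta $ for $a\in A$ and $\mu (a)\leq \gamma $ for $a\notin A$ appearing in the statement of Theorem 2.

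Then I would simply invoke Theorem 2 with $\mu =\chi _{A}$: that theorem says that for every fuzzy subset $\mu $ of this form, $A$ is an LA-subsemigroup of $S$ if and only if $\mu $ is an $\left( \in _{\gamma },\in _{\gamma }\vee q_{\delta }\right) $-fuzzy LA-subsemigroup of $S$. Substituting $\mu =\chi _{A}$ yields precisely the asserted equivalence, so no further computation is required.

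The only point that needs a moment's care — and it is not a real obstacle — is verifying that the extreme values $0$ and $1$ genuinely satisfy $0\leq \gamma $ and $1\geq \delta $; both hold by the standing assumption $\gamma ,\delta \in [0,1]$. If instead one wanted a self-contained proof, one could transcribe the argument of Theorem 2 verbatim, using that $\chi _{A}(ab)=1$ on the ``only if'' direction (so that $\chi _{A}(ab)\geq \delta \geq t\wedge r$ when $t\wedge r\leq \delta $, and $\chi _{A}(ab)+t\wedge r=1+t\wedge r>2\delta $ when $t\wedge r>\delta $), and using $\gamma <\delta$ together with $\chi_A(ab)\in\{0,1\}$ on the ``if'' direction to force $\chi _{A}(ab)=1$. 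But the cleanest presentation is the one-line reduction to Theorem 2.
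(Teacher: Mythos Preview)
Your proposal is correct and mirrors the paper's own treatment: the corollary is stated there without proof immediately after the theorem you invoke, so the paper likewise regards it as an instant specialization of that result to the case $\mu=\chi_{A}$. The only slip is bibliographic rather than mathematical---in the paper's numbering the theorem you are citing is Theorem~1, not Theorem~2.
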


\begin{theorem}
Let $2\delta =1+\gamma $ and $A$ be an LA-subsemigroup of $S.$ Then, the
fuzzy subset $\mu $\ of $S$ defined by%
\begin{equation*}
\mu (a)=\QDATOPD\{ . {\geq \delta \text{ if }a\in A}{\leq \gamma \text{ if }%
a\notin A}
\end{equation*}%
\ is an $\left( q_{\delta },\in _{\gamma }\vee q_{\delta }\right) $-fuzzy
LA-subsemigroup of $S.$
\end{theorem}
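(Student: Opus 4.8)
The plan is to unwind Definition~6 in the form appropriate to the pair $(q_{\delta},\in_{\gamma}\vee q_{\delta})$: I must show that whenever $a,b\in S$ and $t,r\in(\gamma,1]$ satisfy $a_{t}q_{\delta}\mu$ and $b_{r}q_{\delta}\mu$, then $(ab)_{t\wedge r}\in_{\gamma}\vee q_{\delta}\mu$. Since the conclusion here is identical to that of the previous theorem, the whole job is to convert the new (quasi-coincidence) hypothesis into the membership information $a,b\in A$ that drives the argument.

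First I would extract that information. From $a_{t}q_{\delta}\mu$ we have $\mu(a)+t>2\delta=1+\gamma$; since $\mu(a)\le 1$ this already forces $t>\gamma$, and, more to the point, it forces $a\in A$: if $a\notin A$ then $\mu(a)\le\gamma$, whence $\gamma+t\ge\mu(a)+t>1+\gamma$, i.e. $t>1$, contradicting $t\le 1$. The same reasoning applied to $b_{r}q_{\delta}\mu$ yields $b\in A$ and $r>\gamma$. This is the one step where the standing assumption $2\delta=1+\gamma$ is genuinely used, and I expect it to be the only place any thought is required; everything else is bookkeeping.

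Once $a,b\in A$, the hypothesis that $A$ is an LA-subsemigroup gives $ab\in A$, hence $\mu(ab)\ge\delta$. It then remains to compare $t\wedge r$ with $\delta$. If $t\wedge r\le\delta$, then $\mu(ab)\ge\delta\ge t\wedge r>\gamma$, so $(ab)_{t\wedge r}\in_{\gamma}\mu$. If $t\wedge r>\delta$, then $\mu(ab)+t\wedge r>\delta+\delta=2\delta$, so $(ab)_{t\wedge r}q_{\delta}\mu$. In either case $(ab)_{t\wedge r}\in_{\gamma}\vee q_{\delta}\mu$, which is exactly what Definition~6 (in its $(q_{\delta},\in_{\gamma}\vee q_{\delta})$ form) requires; hence $\mu$ is an $(q_{\delta},\in_{\gamma}\vee q_{\delta})$-fuzzy LA-subsemigroup of $S$. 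Note that only this implication is asserted, so no converse needs to be addressed, and the case analysis above mirrors the forward half of the preceding theorem with $\in_{\gamma}$ in the premise replaced by $q_{\delta}$.
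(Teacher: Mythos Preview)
Your proof is correct and follows essentially the same route as the paper: from $a_{t}q_{\delta}\mu$ and $2\delta=1+\gamma$ you deduce $a\in A$ (the paper does this via $\mu(a)>2\delta-t\geq 2\delta-1=\gamma$, you via the equivalent contrapositive), then use $ab\in A$ to get $\mu(ab)\geq\delta$, and finish with the identical two-case comparison of $t\wedge r$ with $\delta$. The observation that $t>\gamma$ is forced is correct but redundant, since $t\in(\gamma,1]$ is already part of the hypothesis.
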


\begin{proof}
Let $2\delta =1+\gamma $ and $A$ be an LA-subsemigroup of $S.$ Let $a,b\in S$
and $t,r$ $\in (\gamma ,1]$, such that $a_{t},b_{r}q_{\delta }\mu $, then $%
\mu (a)+t>2\delta $ and $\mu (b)+r>2\delta .$ This implies that $\mu
(a)>2\delta -t\geq 2\delta -1=\gamma $ and $\mu (b)>2\delta -r\geq 2\delta
-1=\gamma ,$ thus $ab\in A,$ which implies that $\mu (ab)\geq \delta .$ If $%
t\wedge r\leq \delta ,$ then $\mu (ab)\geq \delta \geq t\wedge r>\gamma $
and so $\left( ab\right) _{t\wedge r}\in _{\gamma }\vee q_{\delta }\mu .$ If 
$t\wedge r>\delta ,$ then $\mu (ab)+t\wedge r>\mu (ab)+\delta >\delta
+\delta ,$ which implies that $\mu (ab)+t\wedge r>2\delta $ and so $\left(
ab\right) _{t\wedge r}\in _{\gamma }\vee q_{\delta }\mu .$ Hence, $\mu $ is
an $\left( q_{\delta },\in _{\gamma }\vee q_{\delta }\right) $-fuzzy
LA-subsemigroup of $S.$
\end{proof}

\begin{theorem}
A fuzzy subset $\mu $ of an LA-semigroup $S$ is an $\left( \in _{\gamma
},\in _{\gamma }\vee q_{\delta }\right) $-fuzzy LA-subsemigroup of $S$ if
and only if $\mu (ab)\vee \gamma \geq \mu (a)\wedge \mu (b)\wedge \delta $
for all $a,b\in S.$
\end{theorem}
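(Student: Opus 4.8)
The plan is to prove the two implications separately, following the pattern already established in the proof of the first theorem relating crisp LA-subsemigroups to $\left( \in _{\gamma },\in _{\gamma }\vee q_{\delta }\right)$-fuzzy LA-subsemigroups. For the forward direction, suppose $\mu$ is an $\left( \in _{\gamma },\in _{\gamma }\vee q_{\delta }\right)$-fuzzy LA-subsemigroup and, arguing by contradiction, assume there exist $a,b\in S$ with $\mu(ab)\vee\gamma < \mu(a)\wedge\mu(b)\wedge\delta$. I would then pick a real number $t$ strictly between these two quantities, i.e. $\mu(ab)\vee\gamma < t \le \mu(a)\wedge\mu(b)\wedge\delta$. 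Since $t>\gamma$ and $t\le\mu(a)$, $t\le\mu(b)$, we get $a_t,b_t\in_\gamma\mu$, so by hypothesis $(ab)_t\in_\gamma\vee q_\delta\mu$. But $\mu(ab)<t$ rules out $(ab)_t\in_\gamma\mu$, and $\mu(ab)+t \le \mu(ab)+\delta \le \delta+\delta = 2\delta$ (using $\mu(ab)\le\gamma<\delta$ from $\mu(ab)\vee\gamma<t\le\delta$) rules out $(ab)_t q_\delta\mu$ — a contradiction.

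For the converse, assume $\mu(ab)\vee\gamma\ge\mu(a)\wedge\mu(b)\wedge\delta$ for all $a,b\in S$. Take $a,b\in S$ and $t,r\in(\gamma,1]$ with $a_t,b_r\in_\gamma\mu$, so $\mu(a)\ge t>\gamma$ and $\mu(b)\ge r>\gamma$. Then $\mu(ab)\vee\gamma\ge\mu(a)\wedge\mu(b)\wedge\delta\ge t\wedge r\wedge\delta$. I would split into the two cases used before: if $t\wedge r\le\delta$, then $\mu(ab)\vee\gamma\ge t\wedge r>\gamma$ forces $\mu(ab)\ge t\wedge r$, hence $(ab)_{t\wedge r}\in_\gamma\mu$; if $t\wedge r>\delta$, then $\mu(ab)\vee\gamma\ge\delta$, and since $\gamma<\delta$ we get $\mu(ab)\ge\delta$, whence $\mu(ab)+t\wedge r>\delta+\delta=2\delta$, so $(ab)_{t\wedge r}q_\delta\mu$. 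In either case $(ab)_{t\wedge r}\in_\gamma\vee q_\delta\mu$, as required.

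I do not anticipate a genuine obstacle here; this is essentially a packaging of the inequality characterization and the two case analyses that already appear in the earlier proofs. The one point deserving care is the selection of the intermediate value $t$ in the forward direction and verifying that the strict inequalities line up correctly so that both $\overline{\in}_\gamma$ and $\overline{q}_\delta$ hold at $(ab)_t$ — in particular making sure $\mu(ab)\le\gamma$ is actually available (it follows since $\mu(ab)\le\mu(ab)\vee\gamma<t\le\delta$ does not immediately give $\mu(ab)\le\gamma$, so instead one uses $\mu(ab)\vee\gamma<t$ directly to conclude $\mu(ab)<t$ and $\gamma<t$, and the coincidence computation $\mu(ab)+t<t+t\le 2\cdot\frac{t+t}{2}$ should be replaced by the cleaner bound $\mu(ab)+t\le\gamma+\delta<2\delta$ using $\mu(ab)\vee\gamma<t\le\delta$ which gives $\mu(ab)\le\gamma$ when... ) — more simply, one picks $t=\mu(a)\wedge\mu(b)\wedge\delta$ itself when the inequality is strict, so that $\mu(ab)<t\le\delta$ and $\mu(ab)+t\le\mu(ab)+\delta<t+\delta\le 2\delta$, giving the contradiction directly. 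I would present the argument with that explicit choice of $t$ to keep the estimates transparent.
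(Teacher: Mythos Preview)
Your approach is correct and matches the paper's: contradiction with an intermediate value $t$ in the forward direction, and the case split $t\wedge r\le\delta$ versus $t\wedge r>\delta$ in the converse. The paper additionally splits the forward direction into the sub-cases $\mu(a)\wedge\mu(b)\le\delta$ and $\mu(a)\wedge\mu(b)>\delta$, whereas your final choice $t=\mu(a)\wedge\mu(b)\wedge\delta$ (with the estimate $\mu(ab)+t<t+\delta\le 2\delta$) handles both at once and is slightly cleaner --- just drop the hesitant middle paragraph and present that choice directly.
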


\begin{proof}
Let $\mu $ is an $\left( \in _{\gamma },\in _{\gamma }\vee q_{\delta
}\right) $-fuzzy LA-subsemigroup of $S.$ To show that $\mu (ab)\vee \gamma
\geq \mu (a)\wedge \mu (b)\wedge \delta ,$ for all $a,b\in S,$ we discuss
the following two cases:

$\left( a\right) $ \ $\mu (a)\wedge \mu (b)\leq \delta .$

$\left( b\right) $ $\mu (a)\wedge \mu (b)>\delta .$

Case(a): If there exist $a,b\in S$ such that $\mu (ab)\vee \gamma <\mu
(a)\wedge \mu (b)\wedge \delta .$ Then, $\mu (ab)\vee \gamma <t<\mu
(a)\wedge \mu (b),$ which implies that $a_{t}\in _{\gamma }\mu ,$ $b_{t}\in
_{\gamma }\mu $ but $\left( ab\right) _{t}\overline{\in _{\gamma }\vee
q_{\delta }}\mu ,$ which is a \ contradiction. Hence, $\mu (ab)\vee \gamma
\geq \mu (a)\wedge \mu (b)\wedge \delta .$ for all $a,b\in S.$ Case(b): If
there exist $a,b\in S$ such that $\mu (ab)\vee \gamma <\mu (a)\wedge \mu
(b)\wedge \delta .$ Then, $\mu (ab)\vee \gamma <t<\delta $ and so $a_{\delta
}\in _{\gamma }\mu ,$ $b_{\delta }\in _{\gamma }\mu $ but $\left( ab\right)
_{\delta }\overline{\in _{\gamma }\vee q_{\delta }}\mu ,$ which is a\
contradiction. Hence, $\mu (ab)\vee \gamma \geq \mu (a)\wedge \mu (b)\wedge
\delta $ for all $a,b\in S.$

Conversely, assume that $\mu (ab)\vee \gamma \geq \mu (a)\wedge \mu
(b)\wedge \delta $ for all $a,b\in S.$ We are to show that $\mu $ is an $%
\left( \in _{\gamma },\in _{\gamma }\vee q_{\delta }\right) $-fuzzy
LA-subsemigroup of $S.$ For this let $a_{t},b_{r}\in _{\gamma }\mu .$ Then, $%
\mu (a)\geq t>\gamma $ and $\mu (b)\geq r>\gamma .$ Now by hypothesis, we
have $\mu (ab)\vee \gamma \geq \mu (a)\wedge \mu (b)\wedge \delta \geq
t\wedge r\wedge \delta .$ If $t\wedge r\geq \delta ,$ then $\mu (ab)\vee
\gamma \geq \delta >\gamma $ and so $\mu (ab)>\gamma ,$ that is $\mu
(ab)+t\wedge r>2\delta .$ Hence, $\left( ab\right) _{t\wedge r}\in _{\gamma
}\vee q_{\delta }\mu .$ If $t\wedge r<\delta ,$ then $\mu (ab)\vee \gamma
\geq $ $t\wedge r$ and so $\mu (ab)\geq $ $t\wedge r>\gamma .$ Hence, $%
\left( ab\right) _{t\wedge r}\in _{\gamma }\vee q_{\delta }\mu .$
\end{proof}

\begin{remark}
For any $\left( \in _{\gamma },\in _{\gamma }\vee q_{\delta }\right) $-fuzzy
LA-subsemigroup $\mu $ of $S,$ we can conclude

(i) If $\gamma =0$ and $\delta =1,$ then $\mu $ is a fuzzy LA-subsemigroup
of $S.$

(ii) If $\gamma =0$ and $\delta =0.5,$ then $\mu $ is an $\left( \in ,\in
\vee q\right) $- fuzzy LA-subsemigroup of $S.$
\end{remark}

\begin{theorem}
Let $\mu $ be a fuzzy subset of $S.$ Then,

(i) $\mu $ is an $\left( \in _{\gamma },\in _{\gamma }\vee q_{\delta
}\right) $-fuzzy LA-subsemigroup of $S$ if and only if $\ \mu _{r}^{\gamma
}(\neq \phi )$ is an LA-subsemigroup of $S$ for all $r$ $\in (\gamma ,\delta
].$

(ii) If $2\delta =1+\gamma ,$ then $\mu $ is an $\left( \in _{\gamma },\in
_{\gamma }\vee q_{\delta }\right) $-fuzzy LA-subsemigroup of $S$ if and only
if $\mu _{r}^{\delta }(\neq \phi )$ is an LA-subsemigroup of $S$ for all $r$ 
$\in (\delta ,1].$

(iii) If $2\delta =1+\gamma ,$ then $\mu $ is an $\left( \in _{\gamma },\in
_{\gamma }\vee q_{\delta }\right) $-fuzzy LA-subsemigroup of $S$ if and only
if $[\mu ]_{r}^{\delta }(\neq \phi )$ is an LA-subsemigroup of $S$ for all $%
r $ $\in (\gamma ,1].$

(iv) $\mu $ is an $\left( \in _{\gamma },\in _{\gamma }\vee q_{\delta
}\right) $-fuzzy LA-subsemigroup of $S$ if and only if $\ U(\mu ;r)(\neq
\phi )$ is an LA-subsemigroup of $S$ for all $r$ $\in (\gamma ,\delta ].$
\end{theorem}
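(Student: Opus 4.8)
The plan is to reduce every part of the statement to the pointwise criterion established just above, namely that $\mu$ is an $\left(\in_{\gamma},\in_{\gamma}\vee q_{\delta}\right)$-fuzzy LA-subsemigroup of $S$ if and only if $\mu(ab)\vee\gamma\geq\mu(a)\wedge\mu(b)\wedge\delta$ for all $a,b\in S$. I will use throughout that $\mu_{r}^{\gamma}=\{x\in S:\mu(x)\geq r\}=U(\mu;r)$ whenever $r>\gamma$, that $\mu_{r}^{\delta}=\{x\in S:\mu(x)+r>2\delta\}$, and that $[\mu]_{r}^{\delta}=\mu_{r}^{\gamma}\cup\mu_{r}^{\delta}$; in particular (iv) will follow from (i) with no extra work, so only (i), (ii), (iii) need a separate argument. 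Note also that in (ii) and (iii) the hypothesis $2\delta=1+\gamma$ together with $\gamma<\delta$ forces $\delta<1$ (otherwise $\gamma=\delta$), a fact I will need in the converse of (ii).

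For (i), in the forward direction I would take $r\in(\gamma,\delta]$ with $\mu_{r}^{\gamma}\neq\emptyset$ and $a,b\in\mu_{r}^{\gamma}$; then the criterion gives $\mu(ab)\vee\gamma\geq\mu(a)\wedge\mu(b)\wedge\delta\geq r\wedge\delta=r>\gamma$, so $\mu(ab)\geq r$ and $ab\in\mu_{r}^{\gamma}$. For the converse, if the pointwise inequality fails at some $a,b$, I set $r:=\mu(a)\wedge\mu(b)\wedge\delta$; then $\gamma<r\leq\delta$ (the left inequality because $\mu(ab)\vee\gamma<r$), while $a,b\in\mu_{r}^{\gamma}\neq\emptyset$, so by hypothesis $ab\in\mu_{r}^{\gamma}$, that is $\mu(ab)\geq r>\gamma$, contradicting $\mu(ab)\vee\gamma<r$. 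The same bookkeeping proves (iv) verbatim, and it also handles the part of (iii) with $r\in(\gamma,\delta]$, since for such $r$ the condition $\mu(x)+r>2\delta$ implies $\mu(x)>2\delta-r\geq r$, so $[\mu]_{r}^{\delta}$ coincides with $\mu_{r}^{\gamma}$ there.

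For the remaining ranges of (ii) and (iii) the assumption $2\delta=1+\gamma$ is used precisely to turn a quasi-coincidence bound into a $\gamma$-membership bound: from $\mu(x)+r>2\delta$ with $r\leq1$ one gets $\mu(x)>2\delta-r\geq2\delta-1=\gamma$. In the forward direction of (ii), for $a,b\in\mu_{r}^{\delta}$ with $r\in(\delta,1]$ I would split on whether $\mu(a)\wedge\mu(b)\geq\delta$. If so, the criterion yields $\mu(ab)\vee\gamma\geq\delta>\gamma$, hence $\mu(ab)\geq\delta$ and $\mu(ab)+r>2\delta$. If not, the criterion yields $\mu(ab)\vee\gamma\geq\mu(a)\wedge\mu(b)>\gamma$, hence $\mu(ab)\geq\mu(a)\wedge\mu(b)>2\delta-r$, again $\mu(ab)+r>2\delta$; either way $ab\in\mu_{r}^{\delta}$. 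The forward direction of (iii) for $r\in(\delta,1]$ runs identically once one observes that membership in $[\mu]_{r}^{\delta}$ forces $\mu(a)>2\delta-r$ and $\mu(b)>2\delta-r$ in this range.

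The main obstacle is the converse directions of (ii) and (iii): given $a,b$ with $\mu(ab)\vee\gamma<\mu(a)\wedge\mu(b)\wedge\delta$, one must exhibit an explicit threshold in the prescribed range at which $a$ and $b$ lie in the relevant level set while $ab$ does not. For (ii) I would take $r:=\min\{1,\,2\delta-\mu(ab)\}$: since $\mu(ab)<\delta$ and $\delta<1$ we get $r\in(\delta,1]$, and the strict inequalities $\mu(ab)<\mu(a)$, $\mu(ab)<\mu(b)$ together with $\mu(a),\mu(b)>\gamma=2\delta-1$ give $r>2\delta-\mu(a)$ and $r>2\delta-\mu(b)$, so $a,b\in\mu_{r}^{\delta}$, whereas $\mu(ab)+r\leq2\delta$ shows $ab\notin\mu_{r}^{\delta}$, violating the hypothesis. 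For (iii) I would instead pick any $r$ with $\max\{\gamma,\mu(ab)\}<r\leq\min\{\mu(a)\wedge\mu(b),\,2\delta-\mu(ab)\}$; this interval is nonempty because $\mu(ab)<\mu(a)\wedge\mu(b)$, $\mu(a)\wedge\mu(b)>\gamma$ and $\mu(ab)<\delta<2\delta-\mu(ab)$, so $a,b\in\mu_{r}^{\gamma}\subseteq[\mu]_{r}^{\delta}$ while $\mu(ab)<r$ and $\mu(ab)+r\leq2\delta$ give $ab\notin[\mu]_{r}^{\delta}$, again a contradiction. Pinning down these two thresholds is the only subtle point; everything else is routine verification of $\min$/$\max$ inequalities.
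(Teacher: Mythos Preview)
Your proposal is correct, and for parts (i) and (iv) and the forward implications of (ii) and (iii) it is essentially the paper's argument: both you and the paper reduce everything to the pointwise criterion $\mu(ab)\vee\gamma\geq\mu(a)\wedge\mu(b)\wedge\delta$ and then verify closure of the relevant level set.

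Where you genuinely diverge is in the converse implications of (ii) and (iii). The paper's printed proof there simply repeats the converse of (i) verbatim: it selects $r\in(\gamma,\delta]$ with $\mu(ab)\vee\gamma<r\leq\mu(a)\wedge\mu(b)\wedge\delta$ and declares a contradiction, but this never touches the stated hypothesis, which in (ii) concerns $\mu_r^{\delta}$ for $r\in(\delta,1]$ and in (iii) concerns $[\mu]_r^{\delta}$ for $r\in(\gamma,1]$. Your explicit thresholds---$r=\min\{1,\,2\delta-\mu(ab)\}\in(\delta,1]$ for (ii), and any $r$ with $\max\{\gamma,\mu(ab)\}<r\leq\min\{\mu(a)\wedge\mu(b),\,2\delta-\mu(ab)\}$ for (iii)---are exactly what is needed to place $a,b$ in the correct level set while excluding $ab$, and the nonemptiness checks you give (using $\mu(ab)<\delta<1$ and $2\delta-1=\gamma$) are sound. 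So your argument not only matches the paper's intended strategy but actually fills a gap in its exposition of these two converses.
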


\begin{proof}
(i) Let $\mu $ be an $\left( \in _{\gamma },\in _{\gamma }\vee q_{\delta
}\right) $-fuzzy LA-subsemigroupof $S$ and $a,b\in \mu _{r}^{\gamma }$, for
all $r\in (\gamma ,\delta ].$ Then, $a_{r},b_{r}\in _{\gamma }\mu $, that is 
$\mu (a)\geq r>\gamma $ and $\mu (b)\geq r>\gamma .$ By hypothesis, $\mu
(ab)\vee \gamma \geq \mu (a)\wedge \mu (b)\wedge \delta ,$ which implies
that $\mu (ab)\vee \gamma \geq r\wedge r\wedge \delta =r\wedge \delta .$
Since $r$ $\in (\gamma ,\delta ],$ so $r\leq \delta .$ Thus, $\mu (ab)\geq
r>\gamma ,$ which implies that $ab\in \ \mu _{r}^{\gamma }.$ Hence,$\ \mu
_{r}^{\gamma }$ is LA-subsemigroup of $S.$

Conversely, assume that $\mu _{r}^{\gamma }(\neq \phi )$ is an
LA-subsemigroup of $S$ for all $r$ $\in (\gamma ,\delta ].$ Let $a,b\in S,$
such that $\mu (ab)\vee \gamma <\mu (a)\wedge \mu (b)\wedge \delta .$ Select 
$r$ $\in (\gamma ,\delta ],$ such that $\mu (ab)\vee \gamma <r\leq \mu
(a)\wedge \mu (b)\wedge \delta .$ Then, $a_{r},b_{r}\in _{\gamma }\mu $ but $%
\left( ab\right) _{r}\overline{\in _{\gamma }\vee q_{\delta }}\mu .$ Which
is a contradiction. Thus, $\mu (ab)\vee \gamma \geq \mu (a)\wedge \mu
(b)\wedge \delta .$ Hence, $\mu $ is an $\left( \in _{\gamma },\in _{\gamma
}\vee q_{\delta }\right) $-fuzzy LA-subsemigroup of $S$.

(ii) Let $\mu $ be an $\left( \in _{\gamma },\in _{\gamma }\vee q_{\delta
}\right) $-fuzzy LA-subsemigroup of $S$ and $a,b\in \mu _{r}^{\delta }$ for
all $r\in (\delta ,1].$ Then, $a_{r},b_{r}q_{\delta }\mu $, that is $\mu
(a)+r>2\delta $ and $\mu (b)+r>2\delta .$ Now, $\mu (a)+r>$ $2\delta $
implies that $\mu (a)>2\delta -r\geq 2\delta -1=\gamma .$ Similarly, $\mu
(b)>\gamma .$ By hypothesis, $\mu (ab)\vee \gamma \geq \mu (a)\wedge \mu
(b)\wedge \delta ,$ so we have $\mu (ab)\geq \mu (a)\wedge \mu (b)\wedge
\delta >\left( 2\delta -r\right) \wedge \left( 2\delta -r\right) \wedge
\delta .$ Since $r$ $\in (\delta ,1].$ So, $r>\delta $ which implies that $%
2\delta -r\leq \delta .$ Thus, $\mu (ab)>2\delta -r$ or $\mu (ab)+r>2\delta
. $ Thus, $ab\in \ \mu _{r}^{\delta }.$ Hence, $\ \mu _{r}^{\delta }$ is
LA-subsemigroup of $S.$

Conversely, assume that $\mu _{r}^{\delta }(\neq \phi )$ is an
LA-subsemigroup of $S$ for all $r$ $\in (\delta ,1].$ Let $a,b\in S$ such
that $\mu (ab)\vee \gamma <\mu (a)\wedge \mu (b)\wedge \delta .$ Select $r$ $%
\in (\gamma ,\delta ]$ such that $\mu (ab)\vee \gamma <r\leq \mu (a)\wedge
\mu (b)\wedge \delta .$ Then, $a_{r},b_{r}\in _{\gamma }\mu $ but $\left(
ab\right) _{r}\overline{\in _{\gamma }\vee q_{\delta }}\mu .$ Which is a
contradiction. Thus, $\mu (ab)\vee \gamma \geq \mu (a)\wedge \mu (b)\wedge
\delta .$ Hence, $\mu $ is an $\left( \in _{\gamma },\in _{\gamma }\vee
q_{\delta }\right) $-fuzzy LA-subsemigroup of $S.$

(iii) Let $\mu $ be an $\left( \in _{\gamma },\in _{\gamma }\vee q_{\delta
}\right) $-fuzzy LA-subsemigroup of $S$ and $a,b\in \ [\mu ]_{r}^{\delta }$,
for all $r\in (\gamma ,1].$ Then, $a_{r},b_{r}\in _{\gamma }\vee q_{\delta
}\mu $, that is $\mu (a)\geq r>\gamma $ or $\mu (a)+r>2\delta $ $\ $ which
implies that $\mu (a)\geq r>\gamma $ or $\mu (a)>2\delta -r\geq 2\delta
-1=\gamma .$ Similarly, $\mu (b)\geq r>\gamma $ or $\mu (b)>2\delta -r\geq
2\delta -1=\gamma .$ By hypothesis, $\mu (ab)\vee \gamma \geq \mu (a)\wedge
\mu (b)\wedge \delta $ which implies that $\mu (ab)\geq \mu (a)\wedge \mu
(b)\wedge \delta .$ Case(i): If $r$ $\in (\gamma ,\delta ],$ then $r\leq
\delta .$ This implies that $2\delta -r\geq \delta \geq r.$ Thus, $\mu
(ab)>r\wedge r\wedge \delta =r>\gamma $ or $\mu (ab)>r\wedge \left( 2\delta
-r\right) \wedge \delta =r>\gamma $ or $\mu (ab)>\left( 2\delta -r\right)
\wedge \left( 2\delta -r\right) \wedge \delta =\delta \geq r>\gamma .$
Hence, $ab\in \ \mu _{r}^{\delta }.$ Case(ii): If $r$ $\in (\delta ,1],$
then $r>\delta $ which implies that $2\delta -r<\delta <r,$ and so $\mu
(ab)>r\wedge r\wedge \delta =\delta $ $>2\delta -r$ or $\mu (ab)>r\wedge
2\delta -r\wedge \delta =2\delta -r$ or $\mu (ab)>\left( 2\delta -r\right)
\wedge \left( 2\delta -r\right) \wedge \delta =2\delta -r.$ Thus, $ab\in \
\mu _{r}^{\delta }.$ Hence,$\ \mu _{r}^{\delta }$ is LA-subsemigroup of $S.$
Conversely, assume that $\mu _{r}^{\delta }(\neq \phi )$ is LA-subsemigroup
of $S$ for all $r$ $\in (\delta ,1].$ Let $a,b\in S,$ such that $\mu
(ab)\vee \gamma <\mu (a)\wedge \mu (b)\wedge \delta .$ Select $r$ $\in
(\gamma ,\delta ]$ such that $\mu (ab)\vee \gamma <r\leq \mu (a)\wedge \mu
(b)\wedge \delta .$ Then, $a_{r}\in _{\gamma }\mu ,$ and $b_{r}\in _{\gamma
}\mu $ but $\left( ab\right) _{r}\overline{\in _{\gamma }\vee q_{\delta }}%
\mu ,$ which is a contradiction. Thus, $\mu (ab)\vee \gamma \geq \mu
(a)\wedge \mu (b)\wedge \delta .$ Hence, $\mu $ is an $\left( \in _{\gamma
},\in _{\gamma }\vee q_{\delta }\right) $-fuzzy LA-subsemigroup of $S$.

(iv) Let $\mu $ be an $\left( \in _{\gamma },\in _{\gamma }\vee q_{\delta
}\right) $-fuzzy LA-subsemigroup of $S$ and $a,b\in \ U(\mu ;r)$ for some $%
r\in (\gamma ,\delta ]$. Then $\mu (a)\geq r$ and $\mu (b)\geq r.$ Since $%
\mu $ is an $\left( \in _{\gamma },\in _{\gamma }\vee q_{\delta }\right) $%
-fuzzy LA-subsemigroup of $S,$ we have $\mu (ab)\vee \gamma \geq \mu
(a)\wedge \mu (b)\wedge \delta \geq r\wedge \delta =r,$ which implies that $%
\mu (ab)\geq r$ (because $r>\gamma $). Thus $ab\in U(\mu ;r).$ Hence $U(\mu
;r)$ is an LA-subsemigroup of $S.$

Conversely, assume that $U(\mu ;r)$ is an LA-subsemigroup of $S$ for all $r$ 
$\in (\gamma ,\delta ].$ Suppose that ther exist $a,b\in S$ such that $\mu
(ab)\vee \gamma <\mu (a)\wedge \mu (b)\wedge \delta .$ Select $r$ $\in
(\gamma ,\delta ]$ such that $\mu (ab)\vee \gamma <r\leq \mu (a)\wedge \mu
(b)\wedge \delta .$ Thus $a\in U(\mu ;r)$ and $b\in U(\mu ;r)$ but $ab\notin
U(\mu ;r),$ which is a contradiction. Hence $\mu (ab)\vee \gamma \geq \mu
(a)\wedge \mu (b)\wedge \delta $ and so $\mu $ is an $\left( \in _{\gamma
},\in _{\gamma }\vee q_{\delta }\right) $-fuzzy LA-subsemigroup of $S$.
\end{proof}

If we take $\gamma =0$ and $\delta =0.5$ in above \ theorem, we can conclude
the following results.

\begin{corollary}
Let $\mu $\ be a fuzzy set of $S.$ Then,

(i) $\mu $ is an $\left( \in ,\in \vee q\right) $-fuzzy LA-subsemigroup of $%
S $ if and only if $\ \mu _{r}(\neq \phi )$ LA-subsemigroup of $S$ for all $%
r $ $\in (0,0.5].$

(ii) $\mu $ is an $\left( \in ,\in \vee q\right) $-fuzzy LA-subsemigroup of $%
S$ if and only if $\ Q(\mu ;r)(\neq \Phi )$ LA-subsemigroup of $S$ for all $%
r $ $\in (0.5,1].$

(iii) $\mu $ is an $\left( \in ,\in \vee q\right) $-fuzzy LA-subsemigroup of 
$S$ if and only if $\ [\mu ]_{r}(\neq \Phi )$ LA-subsemigroup of $S$ for all 
$r$ $\in (0,1].$
\end{corollary}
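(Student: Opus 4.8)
\emph{Proof proposal.} The plan is to obtain this Corollary as a direct specialization of the preceding Theorem to the parameters $\gamma=0$ and $\delta=0.5$. First I would record the arithmetic fact that for these values $2\delta=1=1+\gamma$, so the side hypothesis ``$2\delta=1+\gamma$'' demanded in parts (ii) and (iii) of the Theorem is automatically satisfied; hence all four parts of the Theorem are at our disposal. I would also invoke the Remark (part (ii)), which says that with $\gamma=0$ and $\delta=0.5$ the notion ``$\left(\in_{\gamma},\in_{\gamma}\vee q_{\delta}\right)$-fuzzy LA-subsemigroup'' coincides with ``$\left(\in,\in\vee q\right)$-fuzzy LA-subsemigroup'', so the left-hand side of each biconditional in the Corollary already matches the corresponding one in the Theorem.

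Next I would translate the three level-type sets of the Theorem under the substitution. For part (i): $\mu_{r}^{\gamma}=\{x\in S:\mu(x)\geq r>\gamma\}$ becomes $\{x\in S:\mu(x)\geq r\}$, which is exactly the level subset $\mu_{r}$ of the earlier Definition, and the range $(\gamma,\delta]$ becomes $(0,0.5]$; so Theorem (i) (equivalently Theorem (iv), since $U(\mu;r)=\mu_{r}$ when $\gamma=0$) yields Corollary (i). For part (ii): $\mu_{r}^{\delta}=\{x\in S:\mu(x)+r>2\delta\}$ becomes $\{x\in S:\mu(x)+r>1\}$, i.e.\ the quasicoincidence set $Q(\mu;r)$, and $(\delta,1]$ becomes $(0.5,1]$; so Theorem (ii) gives Corollary (ii). For part (iii): $[\mu]_{r}^{\delta}=\{x\in S:\mu(x)\geq r>\gamma\text{ or }\mu(x)+r>2\delta\}$ becomes $\{x\in S:\mu(x)\geq r\text{ or }\mu(x)+r>1\}=[\mu]_{r}$, and $(\gamma,1]$ becomes $(0,1]$; so Theorem (iii) gives Corollary (iii).

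Since nothing beyond these identifications is required, there is no genuine obstacle here; the only point needing care is the bookkeeping --- verifying that the intervals $(\gamma,\delta]$, $(\delta,1]$, $(\gamma,1]$ specialize correctly to $(0,0.5]$, $(0.5,1]$, $(0,1]$ and that each specialized set is literally the $\mu_{r}$, $Q(\mu;r)$, $[\mu]_{r}$ appearing in the statement. Once that is confirmed, each of (i), (ii), (iii) is a verbatim instance of the corresponding part of the Theorem, and the proof is finished.
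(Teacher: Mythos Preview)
Your proposal is correct and matches the paper's approach exactly: the paper simply notes that the Corollary follows by taking $\gamma=0$ and $\delta=0.5$ in the preceding Theorem, and your write-up carries out precisely this specialization, including the verification that $2\delta=1+\gamma$ needed for parts (ii) and (iii).
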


\begin{lemma}
The intersection of any faimly of $\left( \in _{\gamma },\in _{\gamma }\vee
q_{\delta }\right) $-fuzzy LA-subsemigroups is an $\left( \in _{\gamma },\in
_{\gamma }\vee q_{\delta }\right) $-fuzzy LA-subsemigroup .
\end{lemma}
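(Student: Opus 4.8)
The plan is to derive this from the inequality characterization of $\left( \in _{\gamma },\in _{\gamma }\vee q_{\delta }\right) $-fuzzy LA-subsemigroups proved above, namely that a fuzzy subset $\nu $ of $S$ is an $\left( \in _{\gamma },\in _{\gamma }\vee q_{\delta }\right) $-fuzzy LA-subsemigroup of $S$ if and only if $\nu (ab)\vee \gamma \geq \nu (a)\wedge \nu (b)\wedge \delta $ for all $a,b\in S$. Using this criterion is cleaner than arguing directly with fuzzy points, for the reason explained at the end. So let $\{\mu _{i}\}_{i\in I}$ be an arbitrary family of $\left( \in _{\gamma },\in _{\gamma }\vee q_{\delta }\right) $-fuzzy LA-subsemigroups of $S$, and put $\mu =\bigwedge_{i\in I}\mu _{i}$, i.e. $\mu (x)=\inf_{i\in I}\mu _{i}(x)$ for every $x\in S$. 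It suffices to verify that $\mu $ satisfies the displayed inequality.

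Fix $a,b\in S$ and write $c=\mu (a)\wedge \mu (b)\wedge \delta $. First dispose of the trivial case $\gamma \geq c$: then $\mu (ab)\vee \gamma \geq \gamma \geq c$ and we are done. Now assume $\gamma <c$. For each $i\in I$ we have $\mu (a)\leq \mu _{i}(a)$ and $\mu (b)\leq \mu _{i}(b)$, so $c\leq \mu _{i}(a)\wedge \mu _{i}(b)\wedge \delta $; applying the characterization theorem to $\mu _{i}$ gives $\mu _{i}(ab)\vee \gamma \geq \mu _{i}(a)\wedge \mu _{i}(b)\wedge \delta \geq c$. Since $\gamma <c$, the inequality $\mu _{i}(ab)\vee \gamma \geq c$ forces $\mu _{i}(ab)\geq c$. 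As this holds for every $i\in I$, we get $\mu (ab)=\inf_{i\in I}\mu _{i}(ab)\geq c$, hence $\mu (ab)\vee \gamma \geq c=\mu (a)\wedge \mu (b)\wedge \delta $. By the characterization theorem, $\mu $ is an $\left( \in _{\gamma },\in _{\gamma }\vee q_{\delta }\right) $-fuzzy LA-subsemigroup of $S$, which is what we wanted.

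The statement itself is easy, so the only point worth flagging is the handling of an \emph{arbitrary} (possibly infinite) family. A direct fuzzy-point argument would run into the familiar obstruction that an infimum of numbers each strictly exceeding a bound need not itself strictly exceed that bound, which makes the $q_{\delta }$ alternative of $\in _{\gamma }\vee q_{\delta }$ awkward to propagate through the intersection. The route above avoids this entirely because the inequality criterion uses only the non-strict relation $\geq $, and the single place where strictness would matter --- upgrading $\mu _{i}(ab)\vee \gamma \geq c$ to $\mu _{i}(ab)\geq c$ --- is made harmless by splitting off the case $\gamma \geq c$ in advance. If one instead restricts to finite families, the same conclusion also follows from the elementary identity $\min_{i}(x_{i}\vee \gamma )=(\min_{i}x_{i})\vee \gamma $, but the argument given works uniformly.
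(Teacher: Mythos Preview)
Your proof is correct and rests on the same tool the paper uses, namely the inequality characterization $\nu(ab)\vee\gamma\geq\nu(a)\wedge\nu(b)\wedge\delta$ from the preceding theorem. The paper's execution is slightly shorter: instead of splitting into the cases $\gamma\geq c$ and $\gamma<c$, it applies directly the distributive identity
\[
\Bigl(\bigwedge_{i\in I}\mu_i(xy)\Bigr)\vee\gamma=\bigwedge_{i\in I}\bigl(\mu_i(xy)\vee\gamma\bigr),
\]
which is valid in the totally ordered set $[0,1]$ for \emph{arbitrary} (not just finite) index sets $I$. Your closing paragraph suggests you believed this identity to be special to finite families; in fact it holds in any complete chain, and your two-case argument is essentially an ad hoc verification of it. Either presentation is fine, but you can safely drop the case split and the caveat about infinite families.
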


\begin{proof}
Let $\{\mu _{i}\}_{i\in I}$ \ be a faimly of $\left( \in _{\gamma },\in
_{\gamma }\vee q_{\delta }\right) $-fuzzy LA-subsemigroups of $S$ and $%
x,y\in S.$ Then, 
\begin{eqnarray*}
((\tbigwedge\limits_{i\in I}\mu _{i})(xy))\vee \gamma
&=&(\tbigwedge\limits_{i\in I}\mu _{i}(xy))\vee \gamma \\
&=&(\tbigwedge\limits_{i\in I}((\mu _{i}(xy))\vee \gamma )) \\
&\geq &(\tbigwedge\limits_{i\in I}\{\mu _{i}(x)\wedge \mu _{i}(y)\wedge
\delta \}) \\
&=&(\tbigwedge\limits_{i\in I}\mu _{i}(x))\wedge (\tbigwedge\limits_{i\in
I}\mu _{i}(y))\wedge \delta \\
&=&(\tbigwedge\limits_{i\in I}\mu _{i})(x))\wedge (\tbigwedge\limits_{i\in
I}\mu _{i})(y))\wedge \delta .
\end{eqnarray*}%
Hence, $\tbigwedge\limits_{i\in I}\mu _{i}$ is an $\left( \in _{\gamma },\in
_{\gamma }\vee q_{\delta }\right) $-fuzzy LA-subsemigroups of $S.$
\end{proof}

\begin{definition}
A fuzzy subset $\mu $ of an LA-semigroup $S$ is called \ an \ $\left( \in
_{\gamma },\in _{\gamma }\vee q_{\delta }\right) $- fuzzy left(right) ideal
of $S$ if for all $a,s\in S$ and $t\in (\gamma ,1]$,$\ a_{t}\in _{\gamma
}\mu $ implies that $\left( sa\right) _{t}\in _{\gamma }\vee q_{\delta }\mu $
$\left( \left( as\right) _{t}\in _{\gamma }\vee q_{\delta }\mu \right) .$
\end{definition}

\begin{remark}
Every fuzzy left(right) ideal and every $\left( \in ,\in \vee q\right) $%
-fuzzy left(right) ideal is an $\left( \in _{\gamma },\in _{\gamma }\vee
q_{\delta }\right) $-fuzzy left(right) ideal but the converse is not true.
\end{remark}

\begin{example}
Let $S=\{1,2,3,4\}$ be an LA-semigroup with the following multiplication
table.%
\begin{equation*}
\begin{tabular}{l|llll}
${\ast }$ & $1$ & $2$ & $3$ & $4$ \\ \hline
$1$ & $2$ & $4$ & $3$ & $2$ \\ 
$2$ & $4$ & $4$ & $4$ & $4$ \\ 
$3$ & $4$ & $4$ & $4$ & $4$ \\ 
$4$ & $4$ & $4$ & $4$ & $4$%
\end{tabular}%
\end{equation*}

Define fuzzy subset $\mu $ of $S$ by

$\mu (1)=0.6=$ $\mu (2),$ $\mu (3)=0.4=\mu (4).$

Then,

(i) $\mu $ is an $\left( \in _{0.2},\in _{0.2}\vee q_{0.4}\right) $-fuzzy
left ideal of $S.$

(ii) $\mu $ is not an $\left( \in ,\in \vee q\right) $ fuzzy ideal of $S.$
Because $1_{0.6}\in \mu $ and $2_{0.6}\in \mu $ but $\left( 1\ast 2\right)
_{0.6}\overline{\in \vee q}\mu $

(iii) $\mu $ is not fuzzy ideal of $S.$
\end{example}

\begin{remark}
For any \ $\left( \in _{\gamma },\in _{\gamma }\vee q_{\delta }\right) $%
-fuzzy left(right) ideal $\mu $ of $S$ we can conclude

(i) if $\gamma =0$ and $\delta =1$, then $\mu $ is fuzzy left(right) ideal
of $S.$

(ii) if $\gamma =0$ and $\delta =0.5$ then $\mu $ is an $\left( \in ,\in
\vee q\right) $- fuzzy left(right) ideal of $S.$
\end{remark}

\begin{theorem}
Let $A$ be a non-empty subset of $S.$ Then, $A$ is a left(right) ideal of $S$
if and only if the fuzzy subset $\mu $\ of $S$ defined by%
\begin{equation*}
\mu (a)=\QDATOPD\{ . {\geq \delta \text{ if }a\in A}{\leq \gamma \text{ if }%
a\notin A}
\end{equation*}%
\ is an $\left( \in _{\gamma },\in _{\gamma }\vee q_{\delta }\right) $-fuzzy
left(right) ideal of $S.$
\end{theorem}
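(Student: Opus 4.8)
The plan is to prove both directions of the equivalence by mimicking the structure of the proof of the analogous Theorem for LA-subsemigroups, but using the defining inclusion of a left (right) ideal, namely $SA\subseteq A$ (respectively $AS\subseteq A$), in place of $A^{2}\subseteq A$. I will treat the left-ideal case in detail; the right-ideal case is identical after swapping the order of multiplication, so I will just remark on that at the end. Throughout I will use the characterization that for the given $\mu$ one has $\mu(a)\ge\delta$ exactly when $a\in A$ and $\mu(a)\le\gamma$ otherwise, together with the standing assumption $\gamma<\delta$.

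For the forward direction, suppose $A$ is a left ideal of $S$. Take $a,s\in S$ and $t\in(\gamma,1]$ with $a_{t}\in_{\gamma}\mu$, so that $\mu(a)\ge t>\gamma$; since $\mu$ takes only values $\ge\delta$ or $\le\gamma$, this forces $\mu(a)\ge\delta$, hence $a\in A$. Because $A$ is a left ideal, $sa\in SA\subseteq A$, so $\mu(sa)\ge\delta$. Now I split on the size of $t$: if $t\le\delta$ then $\mu(sa)\ge\delta\ge t>\gamma$, giving $(sa)_{t}\in_{\gamma}\mu$; if $t>\delta$ then $\mu(sa)+t>\delta+\delta=2\delta$, giving $(sa)_{t}q_{\delta}\mu$. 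In either case $(sa)_{t}\in_{\gamma}\vee q_{\delta}\mu$, so $\mu$ is an $(\in_{\gamma},\in_{\gamma}\vee q_{\delta})$-fuzzy left ideal of $S$.

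For the converse, assume $\mu$ is an $(\in_{\gamma},\in_{\gamma}\vee q_{\delta})$-fuzzy left ideal of $S$. I would like a pointwise inequality of the form $\mu(sa)\vee\gamma\ge\mu(a)\wedge\delta$ for all $s,a\in S$, the left-ideal analogue of the inequality appearing in the earlier LA-subsemigroup theorem; one could either prove this auxiliary inequality first (by the same two-case contradiction argument: if it failed, pick $r\in(\gamma,\delta]$ strictly between the two sides and check $a_{r}\in_{\gamma}\mu$ while $(sa)_{r}\overline{\in_{\gamma}\vee q_{\delta}}\mu$) or argue directly. Argue directly: let $a\in A$ and $s\in S$; then $\mu(a)\ge\delta>\gamma$, so $a_{\delta}\in_{\gamma}\mu$, and by hypothesis $(sa)_{\delta}\in_{\gamma}\vee q_{\delta}\mu$. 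If $(sa)_{\delta}\in_{\gamma}\mu$ then $\mu(sa)\ge\delta$; if $(sa)_{\delta}q_{\delta}\mu$ then $\mu(sa)+\delta>2\delta$, i.e. $\mu(sa)>\delta$. Either way $\mu(sa)\ge\delta$, hence $sa\in A$. Thus $SA\subseteq A$ and $A$ is a left ideal of $S$.

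The right-ideal case is obtained verbatim by replacing $sa$ with $as$ throughout and using $AS\subseteq A$; the definition of $(\in_{\gamma},\in_{\gamma}\vee q_{\delta})$-fuzzy right ideal supplies exactly the needed implication $a_{t}\in_{\gamma}\mu\Rightarrow(as)_{t}\in_{\gamma}\vee q_{\delta}\mu$. I do not anticipate a genuine obstacle here: the only point requiring a little care is the converse, where one must not assume $\mu$ is $\{0,\ge\delta\}$-valued but only that it is $\le\gamma$ off $A$ and $\ge\delta$ on $A$, so that the two subcases $\in_{\gamma}$ versus $q_{\delta}$ must both be run to conclude $\mu(sa)\ge\delta$; the strict inequality $\gamma<\delta$ is what makes these subcases collapse to the single conclusion $sa\in A$.
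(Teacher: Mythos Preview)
Your proposal is correct and follows essentially the same approach as the paper, which simply says the proof is similar to that of Theorem~4 (the LA-subsemigroup case); your forward direction is a faithful adaptation of that argument. One small difference worth noting: in the converse, the paper's proof of Theorem~4 invokes the inequality $\mu(ab)\vee\gamma\ge\mu(a)\wedge\mu(b)\wedge\delta$ (the content of the later characterization theorem) as ``by hypothesis,'' whereas you argue directly from the definition by plugging in $a_{\delta}$ and splitting on the $\in_{\gamma}$ versus $q_{\delta}$ outcome---your route is self-contained and avoids that forward reference.
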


\begin{proof}
Proof is similar to the proof of Theorem 4.
\end{proof}

\begin{corollary}
Let $A$ be a non-empty subset of an LA-semigroup $S.$ Then, $A$ is a
left(right) ideal of $S$ if and only if $\chi _{A},$ the characteristic
function of $A$ is an $\left( \in _{\gamma },\in _{\gamma }\vee q_{\delta
}\right) $-fuzzy left(right) ideal of $S.$
\end{corollary}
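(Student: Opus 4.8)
The statement to prove is the Corollary asserting that $A$ is a left(right) ideal of $S$ if and only if $\chi_A$ is an $(\in_\gamma,\in_\gamma\vee q_\delta)$-fuzzy left(right) ideal of $S$.

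The plan is to derive this as an immediate special case of the preceding Theorem (the one characterizing left/right ideals via the fuzzy subset $\mu$ with $\mu(a)\ge\delta$ for $a\in A$ and $\mu(a)\le\gamma$ for $a\notin A$). First I would observe that the characteristic function $\chi_A$, which takes the value $1$ on $A$ and $0$ off $A$, is precisely an instance of that family of fuzzy subsets: since $\gamma<\delta\le 1$, we have $1\ge\delta$ and $0\le\gamma$, so $\chi_A$ satisfies $\chi_A(a)\ge\delta$ whenever $a\in A$ and $\chi_A(a)\le\gamma$ whenever $a\notin A$. Hence $\chi_A$ qualifies as one of the subsets $\mu$ to which the Theorem applies.

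Given that observation, the forward direction is immediate: if $A$ is a left(right) ideal, the Theorem tells us that any such $\mu$—in particular $\chi_A$—is an $(\in_\gamma,\in_\gamma\vee q_\delta)$-fuzzy left(right) ideal. For the converse, suppose $\chi_A$ is an $(\in_\gamma,\in_\gamma\vee q_\delta)$-fuzzy left(right) ideal. I would then invoke the converse half of the Theorem: since $\chi_A$ is one of the admissible $\mu$'s and it has the stated fuzzy-ideal property, the Theorem forces $A$ to be a left(right) ideal of $S$. One may alternatively spell this out directly for $\chi_A$ using the pointwise criterion analogous to Theorem~9 for ideals (namely $\chi_A(sa)\vee\gamma\ge\chi_A(a)\wedge\delta$ for left ideals), but routing through the Theorem is cleaner and avoids repetition.

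I do not anticipate any real obstacle here; the only point requiring a moment's care is the trivial verification that $\chi_A$ lies in the family of subsets covered by the Theorem, which reduces to the inequalities $0\le\gamma<\delta\le 1$. Everything else is a direct citation of the immediately preceding result, so the proof will be a single short paragraph noting that $\chi_A$ is a particular choice of $\mu$ in the Theorem and applying both directions of that Theorem.
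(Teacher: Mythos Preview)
Your proposal is correct and matches the paper's approach: the corollary is stated without proof immediately after the theorem on left(right) ideals, and your argument---observing that $\chi_A$ is a particular instance of the $\mu$ in that theorem since $0\le\gamma$ and $1\ge\delta$, then invoking both directions---is exactly the intended derivation.
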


\begin{theorem}
Let $2\delta =1+\gamma $ and $A$ be a left(right) ideal of $S.$ Then, the
fuzzy subset $\mu $\ of $S$ defined by%
\begin{equation*}
\mu (a)=\QDATOPD\{ . {\geq \delta \text{ if }a\in A}{\leq \gamma \text{ if }%
a\notin A}
\end{equation*}%
\ is an $\left( q_{\delta },\in _{\gamma }\vee q_{\delta }\right) $-fuzzy
left(right) ideal of $S.$
\end{theorem}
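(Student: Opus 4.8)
The plan is to mirror the proof of Theorem 6, adapting it from the LA\nobreakdash-subsemigroup setting to the one\nobreakdash-sided ideal setting, where the fuzzy point condition involves a single subscript $t$ rather than a pair $t\wedge r$. By a $\left( q_{\delta },\in _{\gamma }\vee q_{\delta }\right) $\nobreakdash-fuzzy left ideal of $S$ I understand a fuzzy subset $\mu $ such that, for all $a,s\in S$ and all $t\in (\gamma ,1]$, the condition $a_{t}q_{\delta }\mu $ implies $\left( sa\right) _{t}\in _{\gamma }\vee q_{\delta }\mu $, with the symmetric condition ($as$ in place of $sa$) for the right\nobreakdash-ideal case. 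So I fix $a,s\in S$ and $t\in (\gamma ,1]$ with $a_{t}q_{\delta }\mu $, that is $\mu (a)+t>2\delta $, and I must produce $\left( sa\right) _{t}\in _{\gamma }\vee q_{\delta }\mu $.

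First I would extract membership in $A$ from the quasi\nobreakdash-coincidence hypothesis. From $\mu (a)+t>2\delta $ and $t\leq 1$ I get $\mu (a)>2\delta -t\geq 2\delta -1$, and this is the one place the hypothesis $2\delta =1+\gamma $ is used: it yields $\mu (a)>\gamma $. Since $\mu $ takes values $\leq \gamma $ off $A$, this forces $a\in A$. Because $A$ is a left ideal, $sa\in SA\subseteq A$, hence $\mu (sa)\geq \delta $.

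Next I would split on whether $t\leq \delta $ or $t>\delta $, exactly as in Theorem 6. If $t\leq \delta $, then $\mu (sa)\geq \delta \geq t>\gamma $, so $\left( sa\right) _{t}\in _{\gamma }\mu $ and hence $\left( sa\right) _{t}\in _{\gamma }\vee q_{\delta }\mu $. If $t>\delta $, then $\mu (sa)+t>\delta +\delta =2\delta $, so $\left( sa\right) _{t}q_{\delta }\mu $ and again $\left( sa\right) _{t}\in _{\gamma }\vee q_{\delta }\mu $. This establishes the left\nobreakdash-ideal condition; the right\nobreakdash-ideal condition is obtained verbatim by replacing $sa$ with $as$ and $SA\subseteq A$ with $AS\subseteq A$.

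There is no genuine obstacle here: the statement is a routine variant of Theorem 6, and the only point requiring care is that the equality $2\delta =1+\gamma $ is precisely what converts the quasi\nobreakdash-coincidence inequality into the strict inequality $\mu (a)>\gamma $ that pins $a$ inside $A$; without it one could conclude only $\mu (a)>2\delta -1$, which may fail to exceed $\gamma $ and would be useless. The remainder is the same case analysis on the size of $t$ relative to $\delta $ already used in Theorems 4 and 6.
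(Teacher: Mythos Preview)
Your argument is correct and is essentially the paper's own approach: the paper simply says the proof is similar to that of Theorem~5, and what you wrote is precisely the adaptation of Theorem~5's proof to the one\nobreakdash-sided ideal setting (using $a_{t}q_{\delta}\mu$ together with $2\delta=1+\gamma$ to force $a\in A$, then the case split on $t\lessgtr\delta$). The only cosmetic slip is that you refer to Theorem~6 (the $\mu(ab)\vee\gamma\geq\mu(a)\wedge\mu(b)\wedge\delta$ characterization) as the template, whereas the paper points to Theorem~5; the case analysis you actually carry out matches Theorem~5, so this does not affect the substance.
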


\begin{proof}
Proof is similar to the proof of Theorem 5.
\end{proof}

\begin{theorem}
A fuzzy subset $\mu $ of an LA-semigroup $S$ is an $\left( \in _{\gamma
},\in _{\gamma }\vee q_{\delta }\right) $- fuzzy left(right) ideal of an
LA-semigroup $S$ if and only if $\mu \left( sa\right) \vee \gamma \geq \mu
\left( a\right) \wedge \delta $ $\ \left( \mu \left( as\right) \vee \gamma
\geq \mu \left( a\right) \wedge \delta \right) $ for all $a,s\in S.$
\end{theorem}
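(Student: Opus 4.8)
The plan is to mirror exactly the structure of the proof of Theorem~7 (the analogous characterization for LA-subsemigroups), adapting it to the one-sided ideal condition. I will prove the left-ideal case in detail; the right-ideal case is obtained by replacing $sa$ with $as$ throughout. The statement to establish is that $\mu$ is an $\left( \in _{\gamma },\in _{\gamma }\vee q_{\delta }\right)$-fuzzy left ideal of $S$ if and only if $\mu(sa)\vee\gamma\geq\mu(a)\wedge\delta$ for all $a,s\in S$.

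For the forward direction, I would argue by contradiction, splitting into the two cases $\mu(a)\leq\delta$ and $\mu(a)>\delta$, just as in Theorem~7. In the first case, if $\mu(sa)\vee\gamma<\mu(a)\wedge\delta$ for some $a,s$, pick $t$ with $\mu(sa)\vee\gamma<t\leq\mu(a)\wedge\delta$; then $t>\gamma$ and $\mu(a)\geq t>\gamma$, so $a_t\in_\gamma\mu$, yet $\mu(sa)<t\leq\delta$ gives $\mu(sa)<t$ (so $(sa)_t\overline{\in}_\gamma\mu$) and $\mu(sa)+t<2\delta$ (so $(sa)_t\overline{q}_\delta\mu$), contradicting the defining implication $(sa)_t\in_\gamma\vee q_\delta\mu$. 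In the second case, if $\mu(sa)\vee\gamma<\mu(a)\wedge\delta=\delta$, take the point $a_\delta$: since $\mu(a)>\delta$ and $\delta>\gamma$ we have $a_\delta\in_\gamma\mu$, but $\mu(sa)<\delta$ yields $(sa)_\delta\overline{\in}_\gamma\mu$ and $\mu(sa)+\delta<2\delta$ yields $(sa)_\delta\overline{q}_\delta\mu$, again a contradiction. Hence $\mu(sa)\vee\gamma\geq\mu(a)\wedge\delta$ in all cases.

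For the converse, assume $\mu(sa)\vee\gamma\geq\mu(a)\wedge\delta$ for all $a,s\in S$, and let $a_t\in_\gamma\mu$, i.e.\ $\mu(a)\geq t>\gamma$; I must show $(sa)_t\in_\gamma\vee q_\delta\mu$ for every $s\in S$. From the hypothesis, $\mu(sa)\vee\gamma\geq\mu(a)\wedge\delta\geq t\wedge\delta$. If $t\leq\delta$, then $t\wedge\delta=t>\gamma$, so $\mu(sa)\vee\gamma\geq t>\gamma$ forces $\mu(sa)\geq t$, hence $(sa)_t\in_\gamma\mu$. If $t>\delta$, then $\mu(sa)\vee\gamma\geq\delta>\gamma$ forces $\mu(sa)\geq\delta$, and then $\mu(sa)+t>\delta+\delta=2\delta$, so $(sa)_tq_\delta\mu$. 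In either case $(sa)_t\in_\gamma\vee q_\delta\mu$, proving $\mu$ is an $\left( \in _{\gamma },\in _{\gamma }\vee q_{\delta }\right)$-fuzzy left ideal.

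There is no serious obstacle here; the argument is a routine adaptation of Theorem~7 with the binary LA-subsemigroup condition replaced by the unary left-translation condition, so the min over two points collapses to a single value $t$. The only points requiring a little care are keeping track of which of $\overline{\in}_\gamma$ or $\overline{q}_\delta$ fails in the contradiction step, and using $\gamma<\delta$ to pass from $\mu(sa)\vee\gamma\geq\delta$ to $\mu(sa)\geq\delta$. The right-ideal statement follows verbatim by symmetry, so I would simply remark that the proof is analogous.
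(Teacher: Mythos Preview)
Your proof is correct and follows exactly the same strategy as the paper, which simply states that the proof is similar to that of the LA-subsemigroup characterization (the theorem you call Theorem~7; the paper's internal cross-reference calls it Theorem~6). Your two-case contradiction for the forward direction and the $t\le\delta$ versus $t>\delta$ split for the converse mirror that argument precisely, with the two-point minimum collapsed to a single value~$t$ as you note.
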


\begin{proof}
Proof is similar to the proof of Theorem 6.
\end{proof}

\begin{theorem}
Let $\mu $ be a fuzzy subset of $S$ . Then,

(i) $\mu $ is an $\left( \in _{\gamma },\in _{\gamma }\vee q_{\delta
}\right) $-fuzzy left(right) ideal of $S$ if and only if $\ \mu _{r}^{\gamma
}(\neq \phi )$ is left(right) ideal of $S$ for all $r$ $\in (\gamma ,\delta
].$

(ii) If $2\delta =1+\gamma .$ Then, $\mu $ is an $\left( \in _{\gamma },\in
_{\gamma }\vee q_{\delta }\right) $-fuzzy left(right) ideal of $S$ if and
only if $\ \mu _{r}^{\delta }(\neq \phi )$ is left(right) ideal of $S$ for
all $r$ $\in (\delta ,1].$

(iii) If $2\delta =1+\gamma .$ Then, $\mu $ is an $\left( \in _{\gamma },\in
_{\gamma }\vee q_{\delta }\right) $-fuzzy left(right) ideal of $S$ if and
only if $\ [\mu ]_{r}^{\delta }(\neq \phi )$ is left(right) ideal of $S$ for
all $r$ $\in (\gamma ,1].$

(iv) $\mu $ is an $\left( \in _{\gamma },\in _{\gamma }\vee q_{\delta
}\right) $-fuzzy left(right) ideal of $S$ if and only if $\ U(\mu ;r)(\neq
\phi )$ is a left(right) ideal of $S$ for all $r$ $\in (\gamma ,\delta ].$
\end{theorem}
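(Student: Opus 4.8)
The plan is to run, essentially line for line, the proof of the corresponding level-set theorem for $\left( \in _{\gamma },\in _{\gamma }\vee q_{\delta }\right) $-fuzzy LA-subsemigroups, with the binary inequality $\mu (ab)\vee \gamma \geq \mu (a)\wedge \mu (b)\wedge \delta $ replaced everywhere by the unary inequality $\mu (sa)\vee \gamma \geq \mu (a)\wedge \delta $, which characterizes $\left( \in _{\gamma },\in _{\gamma }\vee q_{\delta }\right) $-fuzzy left ideals by the preceding theorem (and by $\mu (as)\vee \gamma \geq \mu (a)\wedge \delta $ for right ideals). I would prove only the left-ideal statement; the right-ideal statement follows by the symmetry $sa\leftrightarrow as$. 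Recall that for $r\in (\gamma ,1]$ one has $\mu _{r}^{\gamma }=\{x:\mu (x)\geq r\}=U(\mu ;r)$, while $\mu _{r}^{\delta }=\{x:\mu (x)+r>2\delta \}$ and $[\mu ]_{r}^{\delta }=\{x:x_{r}\in _{\gamma }\vee q_{\delta }\mu \}$.

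For (i) and (iv): given an $\left( \in _{\gamma },\in _{\gamma }\vee q_{\delta }\right) $-fuzzy left ideal $\mu $, $r\in (\gamma ,\delta ]$, $a\in \mu _{r}^{\gamma }$ and $s\in S$, we have $\mu (a)\geq r>\gamma $, hence $\mu (sa)\vee \gamma \geq \mu (a)\wedge \delta \geq r\wedge \delta =r$ (since $r\leq \delta $), and as $r>\gamma $ this forces $\mu (sa)\geq r$, i.e.\ $sa\in \mu _{r}^{\gamma }$. Conversely, if $\mu (sa)\vee \gamma <\mu (a)\wedge \delta $ for some $a,s\in S$, choose $r\in (\gamma ,\delta ]$ with $\mu (sa)\vee \gamma <r\leq \mu (a)\wedge \delta $; then $a\in \mu _{r}^{\gamma }$ but $sa\notin \mu _{r}^{\gamma }$, contradicting that $\mu _{r}^{\gamma }$ is a left ideal, so the characterizing inequality holds and $\mu $ is an $\left( \in _{\gamma },\in _{\gamma }\vee q_{\delta }\right) $-fuzzy left ideal. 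Since $\mu _{r}^{\gamma }=U(\mu ;r)$ for $r>\gamma $, (iv) is the same argument word for word.

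For (ii) and (iii), where $2\delta =1+\gamma $: in (ii), $a\in \mu _{r}^{\delta }$ with $r\in (\delta ,1]$ gives $\mu (a)>2\delta -r\geq 2\delta -1=\gamma $, so $\mu (sa)\geq \mu (a)\wedge \delta >(2\delta -r)\wedge \delta =2\delta -r$ because $r>\delta $ makes $2\delta -r<\delta $; hence $\mu (sa)+r>2\delta $ and $sa\in \mu _{r}^{\delta }$. In (iii), starting from $a\in [\mu ]_{r}^{\delta }$ with $r\in (\gamma ,1]$, I would split into $r\in (\gamma ,\delta ]$ and $r\in (\delta ,1]$ exactly as in the LA-subsemigroup proof, treating in each case the two sub-cases ``$\mu (a)\geq r$'' and ``$\mu (a)>2\delta -r$'' and checking that $\mu (sa)$ clears the threshold showing $sa\in [\mu ]_{r}^{\delta }$. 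The converses of (ii) and (iii) again go by contradiction: pick $r\in (\gamma ,\delta ]$ witnessing the failure of $\mu (sa)\vee \gamma \geq \mu (a)\wedge \delta $, note that then $\mu (sa)<r\leq \delta $ so $\mu (sa)+r<2\delta $ as well, and thereby contradict the left-ideal property of the relevant level set.

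I do not anticipate a real obstacle. The unary inequality is strictly weaker than the binary one used for subsemigroups, so every estimate loses one meet and still goes through; the only points requiring attention are the use of $2\delta =1+\gamma $ in (ii)--(iii) (needed only to convert $q_{\delta }$-membership into an inequality against $\gamma $) and the case split $r\leq \delta $ versus $r>\delta $ in (iii), both of which are bookkeeping already carried out in the LA-subsemigroup case.
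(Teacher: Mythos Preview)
Your proposal is correct and follows exactly the approach taken in the paper, whose proof of this theorem consists solely of the sentence ``Proof is similar to the proof of Theorem 7'' (the level-set theorem for $\left( \in _{\gamma },\in _{\gamma }\vee q_{\delta }\right) $-fuzzy LA-subsemigroups). Your adaptation---replacing the binary inequality $\mu (ab)\vee \gamma \geq \mu (a)\wedge \mu (b)\wedge \delta $ by the unary one $\mu (sa)\vee \gamma \geq \mu (a)\wedge \delta $ throughout---is precisely what the paper intends, and your sketch faithfully tracks the case analysis of that earlier proof.
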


\begin{proof}
Proof is similar to the proof of Theorem 7.
\end{proof}

\begin{corollary}
Let $\mu $\ be a fuzzy set of $S.$ Then,

(i) $\mu $ is an $\left( \in ,\in \vee q\right) $-fuzzy left(right) ideal of 
$S$ if and only if $\ \mu _{r}(\neq \phi )$ left(right) ideal of $S$ for all 
$r$ $\in (0,0.5].$

(ii) $\mu $ is an $\left( \in ,\in \vee q\right) $-fuzzy left(right) ideal
of $S$ if and only if $\ Q(\mu ;r)(\neq \Phi )$ left(right) ideal of $S$ for
all $r$ $\in (0.5,1].$

(iii) $\mu $ is an $\left( \in ,\in \vee q\right) $-fuzzy left(right) ideal
of $S$ if and only if $\ [\mu ]_{r}(\neq \Phi )$ left(right) ideal of $S$for
all $r$ $\in (0,1].$
\end{corollary}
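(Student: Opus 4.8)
The plan is to deduce this corollary directly from the preceding theorem, the level-set characterisation of $\left(\in_\gamma,\in_\gamma\vee q_\delta\right)$-fuzzy left(right) ideals, by specialising $\gamma=0$ and $\delta=0.5$. First I would verify that this choice satisfies the extra hypothesis $2\delta=1+\gamma$ required by parts (ii) and (iii) of that theorem, since $2(0.5)=1=1+0$; thus all of its parts are applicable.

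Next I would translate the notation. By the remark recorded above, when $\gamma=0$ and $\delta=0.5$ the condition ``$\mu(x)\ge t>\gamma$'' is just ``$\mu(x)\ge t$'' (the value of a fuzzy point being nonzero), so $\in_0$ coincides with $\in$; likewise $\mu(x)+t>2\delta$ becomes $\mu(x)+t>1$, so $q_{0.5}$ coincides with $q$. Hence an $\left(\in_0,\in_0\vee q_{0.5}\right)$-fuzzy left(right) ideal is exactly an $\left(\in,\in\vee q\right)$-fuzzy left(right) ideal. On the level-set side the decorated sets collapse: $\mu_r^{0}=\{x\in S:\mu(x)\ge r>0\}=\mu_r$, $\mu_r^{0.5}=\{x\in S:\mu(x)+r>1\}=Q(\mu;r)$, and $[\mu]_r^{0.5}=\{x\in S:x_r\in\vee q\,\mu\}=[\mu]_r$. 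Substituting these identifications into parts (i), (ii), (iii) of the theorem yields exactly parts (i), (ii), (iii) of the corollary, with $r$ ranging over $(0,0.5]$, $(0.5,1]$ and $(0,1]$ respectively. (Part (iv) of the theorem, concerning $U(\mu;r)$ for $r\in(0,0.5]$, reduces here to part (i) and so need not be listed separately.)

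There is essentially no obstacle in this argument; the one place that needs attention is the bookkeeping of which decorated level set maps to which classical object, and checking at the endpoints that the strict inequalities $t>0$ and $\mu(x)+t>1$ match the usual definitions of $\in$, $q$ and $\in\vee q$. If a self-contained proof is preferred, one may instead copy the proof of the corresponding theorem for LA-subsemigroups word for word, setting $\gamma=0$, $\delta=0.5$ and replacing ``LA-subsemigroup'' by ``left(right) ideal'' throughout, and using the characterisation $\mu(sa)\vee\gamma\ge\mu(a)\wedge\delta$ in place of $\mu(ab)\vee\gamma\ge\mu(a)\wedge\mu(b)\wedge\delta$.
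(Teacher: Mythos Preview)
Your proposal is correct and matches the paper's approach: the paper presents this corollary without proof, as the immediate specialisation of the preceding theorem obtained by taking $\gamma=0$ and $\delta=0.5$ (the same pattern made explicit for the LA-subsemigroup corollary earlier). Your additional check that $2\delta=1+\gamma$ and the careful identification of the decorated level sets with $\mu_r$, $Q(\mu;r)$ and $[\mu]_r$ make explicit exactly what the paper leaves implicit.
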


\begin{theorem}
The intersection of any faimly of $\left( \in _{\gamma },\in _{\gamma }\vee
q_{\delta }\right) $-fuzzy left(right) ideals is an $\left( \in _{\gamma
},\in _{\gamma }\vee q_{\delta }\right) $-fuzzy left(right) ideal.
\end{theorem}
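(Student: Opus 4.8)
The plan is to mimic the proof of Lemma 1 (the intersection result for $\left( \in _{\gamma },\in _{\gamma }\vee q_{\delta }\right) $-fuzzy LA-subsemigroups), since the argument is essentially the same once one has the characterization theorem. First I would invoke Theorem 16, which says that a fuzzy subset $\mu$ is an $\left( \in _{\gamma },\in _{\gamma }\vee q_{\delta }\right) $-fuzzy left (right) ideal of $S$ if and only if $\mu(sa)\vee\gamma\geq\mu(a)\wedge\delta$ (resp. $\mu(as)\vee\gamma\geq\mu(a)\wedge\delta$) for all $a,s\in S$. This reduces the whole problem to a pointwise inequality, which is exactly the form that behaves well under infima.

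Next I would take a family $\{\mu_i\}_{i\in I}$ of $\left( \in _{\gamma },\in _{\gamma }\vee q_{\delta }\right) $-fuzzy left ideals of $S$, fix $a,s\in S$, and compute $\bigl((\bigwedge_{i\in I}\mu_i)(sa)\bigr)\vee\gamma$. The key manipulations are: first, $(\bigwedge_{i\in I}\mu_i)(sa)=\bigwedge_{i\in I}\mu_i(sa)$ by definition of the infimum of fuzzy subsets; second, the distributive identity $\bigl(\bigwedge_{i\in I}\mu_i(sa)\bigr)\vee\gamma=\bigwedge_{i\in I}\bigl(\mu_i(sa)\vee\gamma\bigr)$, which holds because $\vee\gamma$ distributes over arbitrary infima in $[0,1]$; then apply the hypothesis $\mu_i(sa)\vee\gamma\geq\mu_i(a)\wedge\delta$ termwise to get $\bigwedge_{i\in I}\bigl(\mu_i(a)\wedge\delta\bigr)$; and finally use $\bigwedge_{i\in I}\bigl(\mu_i(a)\wedge\delta\bigr)=\bigl(\bigwedge_{i\in I}\mu_i(a)\bigr)\wedge\delta=\bigl((\bigwedge_{i\in I}\mu_i)(a)\bigr)\wedge\delta$. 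Stringing these together yields $\bigl((\bigwedge_{i\in I}\mu_i)(sa)\bigr)\vee\gamma\geq\bigl((\bigwedge_{i\in I}\mu_i)(a)\bigr)\wedge\delta$, which by Theorem 16 means $\bigwedge_{i\in I}\mu_i$ is an $\left( \in _{\gamma },\in _{\gamma }\vee q_{\delta }\right) $-fuzzy left ideal of $S$. The right-ideal case is identical with $sa$ replaced by $as$, and the two-sided case follows by combining them.

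I do not expect any real obstacle here — the only point requiring a moment's care is the distributivity of $\vee\gamma$ over an arbitrary (possibly infinite) infimum, but since $\gamma$ is a constant this is the standard lattice identity $c\vee\bigwedge_i x_i=\bigwedge_i(c\vee x_i)$ in the complete chain $[0,1]$, so it holds verbatim. One could also present the proof via level sets using Theorem 18(i): the intersection of a family of left ideals indexed by the same $r$ need not directly transfer, so the direct pointwise computation above is cleaner and I would use it. I would write the whole thing as a short display-style chain of (in)equalities exactly in the style of Lemma 1's proof, closing with the remark that the right and two-sided statements are proved analogously.
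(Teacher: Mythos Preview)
Your proposal is correct and follows essentially the same approach as the paper: the paper's proof is precisely the displayed chain of (in)equalities you outline, using the characterization $\mu(sa)\vee\gamma\geq\mu(a)\wedge\delta$ and the distributivity of $\vee\gamma$ and $\wedge\delta$ over the infimum, then remarking that the right-ideal case is similar. Your write-up is in fact slightly more careful than the paper's in justifying the distributive step.
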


\begin{proof}
Let $\{\mu _{i}\}_{i\in I}$ \ be a faimly of $\left( \in _{\gamma },\in
_{\gamma }\vee q_{\delta }\right) $-fuzzy left ideals of $S$ and $a,s\in S.$
Then, 
\begin{eqnarray*}
((\tbigwedge\limits_{i\in I}\mu _{i})(sa))\vee \gamma
&=&(\tbigwedge\limits_{i\in I}\mu _{i}(sa))\vee \gamma \\
&=&(\tbigwedge\limits_{i\in I}((\mu _{i}(sa))\vee \gamma )) \\
&\geq &(\tbigwedge\limits_{i\in I}\{\mu _{i}(a)\wedge \delta \}) \\
&=&(\tbigwedge\limits_{i\in I}\mu _{i}(a))\wedge \delta \\
&=&(\tbigwedge\limits_{i\in I}\mu _{i})(x)\wedge \delta .
\end{eqnarray*}%
Hence, $\tbigwedge\limits_{i\in I}\mu _{i}$ is an $\left( \in _{\gamma },\in
_{\gamma }\vee q_{\delta }\right) $-fuzzy left ideal of $S.$ (Similarly, we
can prove for right ideals).
\end{proof}

\begin{lemma}
The union of any faimly of $\left( \in _{\gamma },\in _{\gamma }\vee
q_{\delta }\right) $-fuzzy left(right) ideals is an $\left( \in _{\gamma
},\in _{\gamma }\vee q_{\delta }\right) $-fuzzy left(right) ideals .
\end{lemma}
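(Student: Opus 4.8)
The plan is to reduce the statement to the pointwise characterization of $\left( \in _{\gamma },\in _{\gamma }\vee q_{\delta }\right) $-fuzzy left(right) ideals established above, namely that a fuzzy subset $\lambda $ of $S$ is an $\left( \in _{\gamma },\in _{\gamma }\vee q_{\delta }\right) $-fuzzy left ideal of $S$ if and only if $\lambda (sa)\vee \gamma \geq \lambda (a)\wedge \delta $ for all $a,s\in S$ (and the symmetric inequality $\lambda (as)\vee \gamma \geq \lambda (a)\wedge \delta $ for right ideals). So let $\{\mu _{i}\}_{i\in I}$ be a family of $\left( \in _{\gamma },\in _{\gamma }\vee q_{\delta }\right) $-fuzzy left ideals of $S$, put $\nu =\bigvee_{i\in I}\mu _{i}$, so that $\nu (x)=\sup_{i\in I}\mu _{i}(x)$ for every $x\in S$, and fix $a,s\in S$; it suffices to verify $\nu (sa)\vee \gamma \geq \nu (a)\wedge \delta $.

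For each index $i\in I$ the characterization gives $\mu _{i}(sa)\vee \gamma \geq \mu _{i}(a)\wedge \delta $, and since $\nu (sa)\geq \mu _{i}(sa)$ this yields $\nu (sa)\vee \gamma \geq \mu _{i}(sa)\vee \gamma \geq \mu _{i}(a)\wedge \delta $ for every $i$. Taking the supremum over $i$ on the right-hand side, $\nu (sa)\vee \gamma \geq \sup_{i\in I}\bigl( \mu _{i}(a)\wedge \delta \bigr) $. The only step that is not immediate is to identify $\sup_{i\in I}\bigl( \mu _{i}(a)\wedge \delta \bigr) $ with $\bigl( \sup_{i\in I}\mu _{i}(a)\bigr) \wedge \delta =\nu (a)\wedge \delta $, i.e.\ that truncation at the fixed level $\delta $ commutes with the supremum. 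I would dispose of this by a short case split: if $\nu (a)\leq \delta $ then $\mu _{i}(a)\leq \delta $ for all $i$, hence $\mu _{i}(a)\wedge \delta =\mu _{i}(a)$ and the supremum is $\nu (a)=\nu (a)\wedge \delta $; if $\nu (a)>\delta $ then some $\mu _{i_{0}}(a)>\delta $, hence $\mu _{i_{0}}(a)\wedge \delta =\delta $ and the supremum equals $\delta =\nu (a)\wedge \delta $. In either case $\nu (sa)\vee \gamma \geq \nu (a)\wedge \delta $, and the characterization theorem then shows that $\nu $ is an $\left( \in _{\gamma },\in _{\gamma }\vee q_{\delta }\right) $-fuzzy left ideal of $S$.

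For right ideals the argument is word for word the same, with $sa$ replaced by $as$ throughout and the right-ideal half of the characterization theorem in place of the left-ideal half. I expect the interchange of the supremum with $\wedge \delta $ to be the only genuine point in the proof; everything else is a routine application of the characterization inequality, exactly in the spirit of the proof already given for the intersection of $\left( \in _{\gamma },\in _{\gamma }\vee q_{\delta }\right) $-fuzzy left(right) ideals. One could alternatively argue straight from Definition~7 using fuzzy points, but passing through the characterization keeps the case analysis on the value $t$ of the fuzzy point out of sight.
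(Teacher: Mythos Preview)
Your proof is correct and follows essentially the same route as the paper: both reduce to the pointwise characterization $\lambda(sa)\vee\gamma\geq\lambda(a)\wedge\delta$ from Theorem~12, apply it to each $\mu_{i}$, and then pass to the supremum. The only difference is cosmetic: the paper writes the chain $\bigl(\bigvee_{i}\mu_{i}(sa)\bigr)\vee\gamma=\bigvee_{i}\bigl(\mu_{i}(sa)\vee\gamma\bigr)\geq\bigvee_{i}\bigl(\mu_{i}(a)\wedge\delta\bigr)=\bigl(\bigvee_{i}\mu_{i}(a)\bigr)\wedge\delta$ and silently uses the distributive identity at the last step, whereas you supply an explicit case split to justify that identity---so your argument is, if anything, a shade more complete.
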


\begin{proof}
Let $\{\mu _{i}\}_{i\in I}$ \ be a faimly of $\left( \in _{\gamma },\in
_{\gamma }\vee q_{\delta }\right) $-fuzzy left ideals of $S$ and $s,a\in S.$
Then, $\left( \bigvee\limits_{i\in I}\mu _{i}\right)
(sa)=\bigvee\limits_{i\in I}(\mu _{i}(ab)).$ Since each $\mu _{i}$ is fuzzy
left ideals of $S,$ so $\mu _{i}(sa)\vee \gamma \geq \mu _{i}(a)\wedge
\delta ,$ for all $i\in I.$ Thus,%
\begin{eqnarray*}
\left( \bigvee\limits_{i\in I}\mu _{i}\right) (sa)\vee \gamma
&=&\bigvee\limits_{i\in I}\left( \mu _{i}(sa)\vee \gamma \right) \\
&\geq &\bigvee\limits_{i\in I}\left( \mu _{i}(a)\wedge \delta \right) \\
&=&\left( \bigvee\limits_{i\in I}\mu _{i}(a)\right) \wedge \delta \\
&=&\left( \bigvee\limits_{i\in I}\mu _{i}\right) (a)\wedge \delta .
\end{eqnarray*}

Hence, $\bigvee\limits_{i\in I}\mu _{i}$ is an $\left( \in _{\gamma },\in
_{\gamma }\vee q_{\delta }\right) $-fuzzy left ideal of $S.$ (Similarly, we
can prve for right ideals).
\end{proof}

\begin{definition}
A fuzzy subset $\mu $ of an LA-semigroup $S$ is called \ an \ $\left( \in
_{\gamma },\in _{\gamma }\vee q_{\delta }\right) $-fuzzy generalized bi-
ideal if for all $a,b,s\in S$ and $t,r\in (\gamma ,1],$ $a_{t}\in _{\gamma
}\mu ,b_{r}\in _{r}\mu $ implies that $\left( \left( as\right) b\right)
_{t\wedge r}\in _{\gamma }\vee q_{\delta }\mu .$
\end{definition}

\begin{remark}
Every fuzzy generalized bi-ideal and every $\left( \in ,\in \vee q\right) $%
-fuzzy generalized bi- ideal is an $\left( \in _{\gamma },\in _{\gamma }\vee
q_{\delta }\right) $-fuzzy generalized bi-ideal but the converse is not true.
\end{remark}

\begin{example}
\begin{equation*}
\begin{tabular}{l|llll}
${\ast }$ & $1$ & $2$ & $3$ & $4$ \\ \hline
$1$ & $3$ & $2$ & $3$ & $2$ \\ 
$2$ & $2$ & $2$ & $2$ & $2$ \\ 
$3$ & $2$ & $2$ & $2$ & $2$ \\ 
$4$ & $2$ & $2$ & $2$ & $2$%
\end{tabular}%
\end{equation*}%
`Define a fuzzy subset $\mu $ of $S$ by

$\mu (1)=0.4,$ $\mu (2)=0.5,$ $\mu (3)=0.35$ and $\mu (4)=0.$

Then,

$(i)$\ $\mu $ is an $\left( \in _{0.3},\in _{0.3}\vee q_{0.35}\right) $%
-fuzzy generalized bi-ideal.

$\left( ii\right) $\ $\mu $ is not an $\left( \in ,\in \vee q\right) $-fuzzy
generalized bi-ideal\ because $1_{0.4}\in \mu ,$ but $\left( \left( 1\ast
3\right) \ast 1\right) _{0.6}\overline{\in \vee q}\mu .$

$\left( iii\right) $\ $\mu $ is not fuzzy generalized bi-ideal.
\end{example}

\begin{theorem}
Let $A$ be a non-empty subset of $S.$ Then, $A$ is a generalized bi-ideal of 
$S$ if and only if the fuzzy subset $\mu $\ of $S$ defined by%
\begin{equation*}
\mu (a)=\QDATOPD\{ . {\geq \delta \text{ if }a\in A}{\leq \gamma \text{ if }%
a\notin A}
\end{equation*}%
\ is an $\left( \in _{\gamma },\in _{\gamma }\vee q_{\delta }\right) $-fuzzy
generalized bi-ideal of $S.$
\end{theorem}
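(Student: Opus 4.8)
The plan is to prove this characterization in two directions, following the template of Theorem 4 (for LA-subsemigroups) but adapted to the defining inequality for generalized bi-ideals. Recall from Definition 9 and the pattern of Theorem 6 that a fuzzy subset $\mu$ is an $\left( \in _{\gamma },\in _{\gamma }\vee q_{\delta }\right) $-fuzzy generalized bi-ideal if and only if $\mu \left( \left( as\right) b\right) \vee \gamma \geq \mu (a)\wedge \mu (b)\wedge \delta$ for all $a,b,s\in S$; I would either first establish this auxiliary equivalence in passing or cite it if it appears as a theorem just before this one. Either way, that inequality is the working characterization I will use.

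For the forward direction, I would assume $A$ is a generalized bi-ideal of $S$, so $(AS)A\subseteq A$, and take $a,b,s\in S$ together with $t,r\in (\gamma,1]$ such that $a_{t},b_{r}\in_{\gamma}\mu$. Then $\mu(a)\geq t>\gamma$ and $\mu(b)\geq r>\gamma$, and by the definition of $\mu$ this forces $\mu(a)\geq\delta$ and $\mu(b)\geq\delta$, hence $a,b\in A$. Therefore $(as)b\in (AS)A\subseteq A$, so $\mu((as)b)\geq\delta$. Now I split into the two cases exactly as in Theorem 4: if $t\wedge r\leq\delta$ then $\mu((as)b)\geq\delta\geq t\wedge r>\gamma$, giving $((as)b)_{t\wedge r}\in_{\gamma}\mu$; if $t\wedge r>\delta$ then $\mu((as)b)+t\wedge r>\delta+\delta=2\delta$, giving $((as)b)_{t\wedge r}q_{\delta}\mu$. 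In both cases $((as)b)_{t\wedge r}\in_{\gamma}\vee q_{\delta}\mu$, so $\mu$ is an $\left( \in _{\gamma },\in _{\gamma }\vee q_{\delta }\right) $-fuzzy generalized bi-ideal.

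For the converse, I would assume $\mu$ is an $\left( \in _{\gamma },\in _{\gamma }\vee q_{\delta }\right) $-fuzzy generalized bi-ideal and take arbitrary $a,b\in A$ and $s\in S$; I must show $(as)b\in A$. Since $a,b\in A$, we have $\mu(a)\geq\delta$ and $\mu(b)\geq\delta$. Applying the working inequality $\mu((as)b)\vee\gamma\geq\mu(a)\wedge\mu(b)\wedge\delta\geq\delta\wedge\delta\wedge\delta=\delta$, so $\mu((as)b)\vee\gamma\geq\delta$. Since $\gamma<\delta$, the join cannot be realized by $\gamma$, hence $\mu((as)b)\geq\delta$, which by definition of $\mu$ means $(as)b\in A$. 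Thus $(AS)A\subseteq A$ and $A$ is a generalized bi-ideal of $S$.

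The only real subtlety, and hence the step I would be most careful about, is the use of the auxiliary characterization $\mu\left(\left(as\right)b\right)\vee\gamma\geq\mu(a)\wedge\mu(b)\wedge\delta$ in the converse: if that equivalence has not been stated as a separate theorem before this one, I would need to derive it here by a short contradiction argument (choosing $t$ strictly between $\mu((as)b)\vee\gamma$ and $\mu(a)\wedge\mu(b)\wedge\delta$, noting $a_{t},b_{t}\in_{\gamma}\mu$ while $((as)b)_{t}\overline{\in_{\gamma}\vee q_{\delta}}\mu$, exactly as in the proof of Theorem 6). Everything else is a routine transcription of the LA-subsemigroup argument with the product $ab$ replaced by $(as)b$ and the element $s$ ranging freely, so I expect no genuine obstacle beyond bookkeeping of the $\gamma,\delta$ thresholds.
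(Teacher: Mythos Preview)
Your proposal is correct and follows exactly the approach the paper intends: the paper's own proof consists of the single line ``Proof is similar to the proof of Theorem 4,'' and what you have written is precisely that transcription, replacing $ab$ by $(as)b$ throughout. Your caution about the auxiliary inequality $\mu((as)b)\vee\gamma\geq\mu(a)\wedge\mu(b)\wedge\delta$ is well placed---in the paper this characterization is stated only after the present theorem, yet the analogous inequality is used freely in the converse of Theorem~4 itself---so your offer to derive it inline by the Theorem~6-style contradiction argument actually makes your write-up more self-contained than the original.
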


\begin{proof}
Proof is similar to the proof of Theorem 4.
\end{proof}

\begin{corollary}
Let $A$ be a non-empty subset of an LA-semigroup $S.$ Then, $A$ is a
generalized bi-ideal of $S$ if and only if $\chi _{A},$ the characteristic
function of $A$ is an $\left( \in _{\gamma },\in _{\gamma }\vee q_{\delta
}\right) $-fuzzy generalized bi-ideal of $S.$
\end{corollary}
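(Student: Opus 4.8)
The plan is to deduce this corollary immediately from the theorem that directly precedes it. First I would note that, since $\gamma,\delta\in[0,1]$ with $\gamma<\delta$, the characteristic function $\chi_A$ satisfies $\chi_A(a)=1\geq\delta$ whenever $a\in A$ and $\chi_A(a)=0\leq\gamma$ whenever $a\notin A$. Hence $\chi_A$ is a fuzzy subset of $S$ of exactly the form $\mu(a)\geq\delta$ for $a\in A$, $\mu(a)\leq\gamma$ for $a\notin A$ treated in the preceding theorem. Applying that theorem with $\mu=\chi_A$ gives at once that $A$ is a generalized bi-ideal of $S$ if and only if $\chi_A$ is an $(\in_\gamma,\in_\gamma\vee q_\delta)$-fuzzy generalized bi-ideal of $S$, which is the claim.

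For completeness I would also record the direct argument, which runs parallel to the proof of Theorem 4. For the forward implication, assume $A$ is a generalized bi-ideal and take $a,b,s\in S$ and $t,r\in(\gamma,1]$ with $a_t,b_r\in_\gamma\chi_A$; then $\chi_A(a)\geq t>\gamma$ and $\chi_A(b)\geq r>\gamma$ force $\chi_A(a)=\chi_A(b)=1$, i.e. $a,b\in A$, so $(as)b\in(AS)A\subseteq A$ and $\chi_A((as)b)=1$. If $t\wedge r\leq\delta$ then $\chi_A((as)b)=1\geq t\wedge r>\gamma$, so $((as)b)_{t\wedge r}\in_\gamma\chi_A$; if $t\wedge r>\delta$ then $\chi_A((as)b)+t\wedge r=1+t\wedge r>2\delta$, so $((as)b)_{t\wedge r}q_\delta\chi_A$. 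In either case $((as)b)_{t\wedge r}\in_\gamma\vee q_\delta\chi_A$, so $\chi_A$ is an $(\in_\gamma,\in_\gamma\vee q_\delta)$-fuzzy generalized bi-ideal of $S$.

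For the converse, suppose $\chi_A$ is an $(\in_\gamma,\in_\gamma\vee q_\delta)$-fuzzy generalized bi-ideal and let $a,b\in A$, $s\in S$. Then $\chi_A(a)=\chi_A(b)=1\geq\delta$, and the inequality characterization $\chi_A((as)b)\vee\gamma\geq\chi_A(a)\wedge\chi_A(b)\wedge\delta$ (proved exactly as its analogue for LA-subsemigroups, the condition $\mu(ab)\vee\gamma\geq\mu(a)\wedge\mu(b)\wedge\delta$) yields $\chi_A((as)b)\vee\gamma\geq\delta$; since $\gamma<\delta$ this forces $\chi_A((as)b)\geq\delta>0$, hence $(as)b\in A$. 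Therefore $(AS)A\subseteq A$ and $A$ is a generalized bi-ideal of $S$. I do not anticipate any genuine obstacle: the only point needing a little care is having the generalized–bi-ideal inequality characterization available, and if one prefers to avoid it, the cleanest route is simply to invoke the preceding theorem with $\mu=\chi_A$ as in the first paragraph.
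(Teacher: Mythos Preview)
Your proposal is correct and matches the paper's approach: the paper states this as an immediate corollary of the preceding theorem (whose proof in turn mirrors Theorem~4), and your first paragraph does precisely that by specializing $\mu=\chi_A$. The direct argument you add for completeness is also sound and simply unwinds the same reasoning.
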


\begin{theorem}
Let $2\delta =1+\gamma $ and $A$ be an generalized bi-ideal of $S.$ Then,
the fuzzy subset $\mu $\ of $S$ defined by%
\begin{equation*}
\mu (a)=\QDATOPD\{ . {\geq \delta \text{ if }a\in A}{\leq \gamma \text{ if }%
a\notin A}
\end{equation*}%
\ is an $\left( q_{\delta },\in _{\gamma }\vee q_{\delta }\right) $-fuzzy
generalized bi-ideal of $S.$
\end{theorem}

\begin{proof}
Proof is similar to the proof of Theorem 5$.$
\end{proof}

\begin{theorem}
A fuzzy subset $\mu $ of an LA-semigroup $S$ is an $\left( \in _{\gamma
},\in _{\gamma }\vee q_{\delta }\right) $-fuzzy generalised bi-ideal of $S$
if \ and only if for all $a,b,s\in S$ and $t,r\in (\gamma ,1],$\ $\mu \
(\left( as\right) b)\vee \gamma \geq \mu \left( a\right) \wedge \mu \left(
b\right) \wedge \delta .$
\end{theorem}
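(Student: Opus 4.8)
The plan is to transcribe the proof of Theorem 6 (the analogous characterization of $\left(\in_\gamma,\in_\gamma\vee q_\delta\right)$-fuzzy LA-subsemigroups), replacing the product $ab$ by the term $(as)b$ and carrying the second fuzzy point $b$ alongside $a$; indeed, the present statement is simply the generalized bi-ideal counterpart of that result.

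For the ``only if'' part, I would assume $\mu$ is an $\left(\in_\gamma,\in_\gamma\vee q_\delta\right)$-fuzzy generalized bi-ideal and suppose, for contradiction, that $\mu((as)b)\vee\gamma<\mu(a)\wedge\mu(b)\wedge\delta$ for some $a,b,s\in S$, then split into the two cases $\mu(a)\wedge\mu(b)\le\delta$ and $\mu(a)\wedge\mu(b)>\delta$. In the first case choose $t\in(\gamma,1]$ with $\mu((as)b)\vee\gamma<t<\mu(a)\wedge\mu(b)\le\delta$; then $a_t,b_t\in_\gamma\mu$, so the definition gives $\big((as)b\big)_t\in_\gamma\vee q_\delta\mu$, while $\mu((as)b)<t$ rules out $\in_\gamma$ and $\mu((as)b)+t<\delta+\delta=2\delta$ rules out $q_\delta$, a contradiction. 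In the second case the assumed inequality reduces to $\mu((as)b)\vee\gamma<\delta$, so $\mu((as)b)<\delta$; taking $t=\delta$ we get $a_\delta,b_\delta\in_\gamma\mu$ (since $\mu(a),\mu(b)>\delta>\gamma$), hence $\big((as)b\big)_\delta\in_\gamma\vee q_\delta\mu$, which again contradicts $\mu((as)b)<\delta$. Thus $\mu((as)b)\vee\gamma\ge\mu(a)\wedge\mu(b)\wedge\delta$ for all $a,b,s\in S$.

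For the ``if'' part, I would assume the inequality and take $a_t,b_r\in_\gamma\mu$, i.e.\ $\mu(a)\ge t>\gamma$ and $\mu(b)\ge r>\gamma$; then $\mu((as)b)\vee\gamma\ge\mu(a)\wedge\mu(b)\wedge\delta\ge t\wedge r\wedge\delta$. If $t\wedge r<\delta$ this forces $\mu((as)b)\ge t\wedge r>\gamma$, so $\big((as)b\big)_{t\wedge r}\in_\gamma\mu$; if $t\wedge r\ge\delta$ it forces $\mu((as)b)\ge\delta$ (using $\gamma<\delta$), so that either $\mu((as)b)+(t\wedge r)>2\delta$, or, in the boundary case $t\wedge r=\delta$, $\mu((as)b)\ge\delta=t\wedge r>\gamma$. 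In every case $\big((as)b\big)_{t\wedge r}\in_\gamma\vee q_\delta\mu$, so $\mu$ is an $\left(\in_\gamma,\in_\gamma\vee q_\delta\right)$-fuzzy generalized bi-ideal of $S$. The computations are routine; the one point that needs care is this boundary case $t\wedge r=\delta$ in the converse, where the coincidence clause $q_\delta$ degenerates (the sum equals $2\delta$ rather than exceeding it) and one must fall back on the membership clause $\in_\gamma$; everything else is a verbatim adaptation of the proof of Theorem 6 with $(as)b$ in place of $ab$.
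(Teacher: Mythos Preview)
Your proposal is correct and follows precisely the approach the paper intends: the paper's own proof of this theorem is simply ``Proof is similar to the proof of Theorem 6,'' and your write-up is exactly that adaptation, with $(as)b$ in place of $ab$. Your explicit handling of the boundary case $t\wedge r=\delta$ in the converse is in fact more careful than the paper's original Theorem~6 proof, which glosses over the distinction between $\mu(ab)+t\wedge r\ge 2\delta$ and $\mu(ab)+t\wedge r>2\delta$.
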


\begin{proof}
Proof is similar to the proof of Theorem 6.
\end{proof}

\begin{remark}
For any \ $\left( \in _{\gamma },\in _{\gamma }\vee q_{\delta }\right) $-
fuzzy generalized bi-ideal $\mu $ of $S$ we can conclude

(i)if $\gamma =0,$ $\delta =1$, then $\mu $ is fuzzy generalized bi-idealof $%
S.$

(ii)if $\gamma =0,$ $\delta =0.5,$ then $\mu $ is an $\left( \in ,\in \vee
q\right) $-fuzzy generalized bi-ideal of $S.$
\end{remark}

\begin{theorem}
Let $\mu $ be a fuzzy subset of $S$. Then,

(i) $\mu $ is an $\left( \in _{\gamma },\in _{\gamma }\vee q_{\delta
}\right) $-fuzzy generalised bi-ideal of $S$ if and only if $\ \mu
_{r}^{\gamma }(\neq \phi )$ is generalised bi-ideal of $S$ for all $r$ $\in
(\gamma ,\delta ].$

(ii) If $2\delta =1+\gamma ,$ then $\mu $ is an $\left( \in _{\gamma },\in
_{\gamma }\vee q_{\delta }\right) $-fuzzy generalised bi-ideal of of $S$ if
and only if $\ \mu _{r}^{\delta }(\neq \phi )$ is generalised bi-ideal of of 
$S$ for all $r$ $\in (\delta ,1].$

(iii) If $2\delta =1+\gamma ,$ then $\mu $ is an $\left( \in _{\gamma },\in
_{\gamma }\vee q_{\delta }\right) $-fuzzy generalised bi-ideal of of $S$ if
and only if $\ [\mu ]_{r}^{\delta }(\neq \phi )$ is generalised bi-ideal of $%
S$ for all $r$ $\in (\gamma ,1].$

(iv) $\mu $ is an $\left( \in _{\gamma },\in _{\gamma }\vee q_{\delta
}\right) $-fuzzy generalized bi-ideal of $S$ if and only if $\ U(\mu
;r)(\neq \phi )$ is a generalized bi-ideal of $S$ for all $r$ $\in (\gamma
,\delta ].$
\end{theorem}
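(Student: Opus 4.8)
The plan is to reduce all four equivalences to the inequality characterization established for $(\in_\gamma, \in_\gamma \vee q_\delta)$-fuzzy generalized bi-ideals, namely that $\mu$ is such an ideal if and only if $\mu((as)b) \vee \gamma \geq \mu(a) \wedge \mu(b) \wedge \delta$ for all $a, b, s \in S$. Granting this, each of (i)--(iv) follows by the same two-directional argument used to prove Theorem 7 for LA-subsemigroups, with the product $ab$ replaced throughout by $(as)b$ and with an extra element $s \in S$ ranging freely; since $s$ never enters a fuzzy-point membership hypothesis, its presence introduces no new phenomena, so the work is essentially a transcription.

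For (i), I would first assume $\mu$ is an $(\in_\gamma, \in_\gamma \vee q_\delta)$-fuzzy generalized bi-ideal and take $a, b \in \mu_r^\gamma$ with $r \in (\gamma, \delta]$, so $\mu(a) \geq r > \gamma$ and $\mu(b) \geq r > \gamma$; for any $s \in S$ the characterization gives $\mu((as)b) \vee \gamma \geq r \wedge r \wedge \delta = r$ (as $r \leq \delta$), hence $\mu((as)b) \geq r > \gamma$ and $(as)b \in \mu_r^\gamma$. Conversely, if $\mu_r^\gamma(\neq \phi)$ is a generalized bi-ideal for all $r \in (\gamma, \delta]$ but $\mu((as)b) \vee \gamma < \mu(a) \wedge \mu(b) \wedge \delta$ for some $a, b, s$, pick $r \in (\gamma, \delta]$ with $\mu((as)b) \vee \gamma < r \leq \mu(a) \wedge \mu(b) \wedge \delta$; then $a_r, b_r \in_\gamma \mu$ while $((as)b)_r \overline{\in_\gamma \vee q_\delta} \mu$, a contradiction. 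Part (iv) is the same argument with $U(\mu; r)$ in place of $\mu_r^\gamma$: for $a, b \in U(\mu; r)$, $r \in (\gamma, \delta]$, one gets $\mu((as)b) \vee \gamma \geq r \wedge \delta = r$, so $\mu((as)b) \geq r$ and $(as)b \in U(\mu; r)$; the converse is the identical selection of $r$.

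Parts (ii) and (iii) additionally use $2\delta = 1 + \gamma$, which yields $2\delta - r \geq 2\delta - 1 = \gamma$ for all $r \leq 1$; this is precisely the inequality that powers the proofs of Theorem 7(ii)--(iii) in passing between $x_r q_\delta \mu$ and $x_r \overline{\in}_\gamma \mu$. For (ii) I would rewrite $a, b \in \mu_r^\delta$ as $\mu(a) > 2\delta - r$, $\mu(b) > 2\delta - r$, apply the characterization, and use $2\delta - r \leq \delta$ (valid since $r > \delta$) to obtain $\mu((as)b) > 2\delta - r$, i.e. $(as)b \in \mu_r^\delta$, with the converse mirroring (i). For (iii), $a, b \in [\mu]_r^\delta$ splits each of $a$ and $b$ into the subcases ``$\in_\gamma$'' and ``$q_\delta$'', and then $r$ splits into $(\gamma, \delta]$ and $(\delta, 1]$; in each branch one checks, exactly as in the displayed computation for Theorem 7(iii), that $\mu((as)b)$ dominates $r$ when $r \leq \delta$ and dominates $2\delta - r$ when $r > \delta$, so $(as)b \in [\mu]_r^\delta$. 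I expect this case analysis in (iii) --- three membership subcases for the pair $(a,b)$ times two ranges for $r$ --- to be the only nontrivial part, and it is pure bookkeeping copied verbatim from the proof of Theorem 7(iii) with $ab$ replaced by $(as)b$; the rest is mechanical.
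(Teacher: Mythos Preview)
Your proposal is correct and follows exactly the paper's approach: the paper's proof of this theorem is the single line ``Proof is similar to the proof of Theorem 7,'' and you have correctly identified that the argument is a verbatim transcription of that proof with $ab$ replaced by $(as)b$ and the characterization $\mu((as)b)\vee\gamma\geq\mu(a)\wedge\mu(b)\wedge\delta$ (Theorem 16) in place of the LA-subsemigroup inequality. Your outlined details for each of (i)--(iv) match the corresponding parts of the proof of Theorem 7 precisely.
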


\begin{proof}
Proof is similar to the proof of Theorem 7.
\end{proof}

If we take $\gamma =0$ and $\delta =0.5$ in above \ theorem, we can conclude
the following results

\begin{corollary}
Let $\mu $\ be a fuzzy set of $S.$ Then,

(i) $\mu $ is an $\left( \in ,\in \vee q\right) $-fuzzy generalised bi-ideal
of $S$ if and only if $\ \mu _{r}(\neq \phi )$ is a generalised bi-ideal of $%
S$ for all $r$ $\in (0,0.5].$

(ii) $\mu $ is an $\left( \in ,\in \vee q\right) $-fuzzy generalised
bi-ideal of $S$ if and only if $\ Q(\mu ;r)(\neq \Phi )$ is a generalised
bi-ideal of $S$ for all $r$ $\in (0.5,1].$

(iii) $\mu $ is an $\left( \in ,\in \vee q\right) $-fuzzy generalised
bi-ideal of $S$ if and only if $\ [\mu ]_{r}(\neq \Phi )$ is a generalised
bi-ideal of $S$ for all $r$ $\in (0,1].$

(iv) $\mu $ is an $\left( \in ,\in \vee q\right) $-fuzzy generalized
bi-ideal of $S$ if and only if $U(\mu ;r)(\neq \phi )$ is a generalized
bi-ideal of $S$ for all $r$ $\in (0,0.5].$
\end{corollary}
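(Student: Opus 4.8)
The plan is to obtain this Corollary by instantiating the preceding Theorem --- the level-set characterization of $\left( \in _{\gamma },\in _{\gamma }\vee q_{\delta }\right) $-fuzzy generalized bi-ideals --- at the particular values $\gamma =0$ and $\delta =0.5$. The first observation is that with these values the side condition $2\delta =1+\gamma $ required in parts (ii) and (iii) of that Theorem holds automatically, since $2(0.5)=1=1+0$; so all four clauses of the Theorem are available with no extra hypothesis. The second observation, recorded in the Remark preceding that Theorem, is that for $\gamma =0,\ \delta =0.5$ the notion of an $\left( \in _{\gamma },\in _{\gamma }\vee q_{\delta }\right) $-fuzzy generalized bi-ideal coincides with that of an ordinary $\left( \in ,\in \vee q\right) $-fuzzy generalized bi-ideal; hence the hypothesis ``$\mu $ is an $\left( \in ,\in \vee q\right) $-fuzzy generalized bi-ideal'' in each clause below is literally the hypothesis appearing in the corresponding clause of the Theorem.

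The substantive (but still routine) step is to verify that the four families of level sets used in the Theorem reduce, under $\gamma =0,\ \delta =0.5$, to the four families appearing in the Corollary. For $r>0$ one has $\mu _{r}^{0}=\{x\in S:\mu (x)\geq r>0\}=\{x\in S:\mu (x)\geq r\}=\mu _{r}$, the ordinary level set, and the interval $(\gamma ,\delta ]$ becomes $(0,0.5]$; this turns clause (i) of the Theorem into clause (i) of the Corollary, and since $U(\mu ;r)=\{x\in S:\mu (x)\geq r\}$ on the same interval it turns clause (iv) of the Theorem into clause (iv) of the Corollary. Next, $\mu _{r}^{0.5}=\{x\in S:\mu (x)+r>2(0.5)=1\}=\{x\in S:\mu (x)+r>1\}=Q(\mu ;r)$ on the interval $(\delta ,1]=(0.5,1]$, which gives clause (ii). Finally, $[\mu ]_{r}^{0.5}=\{x\in S:\mu (x)\geq r>0\ \text{or}\ \mu (x)+r>1\}=\{x\in S:\mu (x)\geq r\ \text{or}\ \mu (x)+r>1\}=[\mu ]_{r}$ on the interval $(\gamma ,1]=(0,1]$, which gives clause (iii).

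With these identifications each of (i)--(iv) is exactly the corresponding clause of the preceding Theorem read off at $\gamma =0,\ \delta =0.5$, so no further argument is needed; I would present the proof as the single sentence ``take $\gamma =0$ and $\delta =0.5$ in the preceding Theorem,'' followed by the three set identities above. There is no genuine obstacle here: the only point to be careful about is to confirm that the symbols $\mu _{r}$, $Q(\mu ;r)$, $[\mu ]_{r}$ as understood in the $\left( \in ,\in \vee q\right) $ theory are defined exactly as the restrictions of $\mu _{r}^{\gamma }$, $\mu _{r}^{\delta }$, $[\mu ]_{r}^{\delta }$ computed above --- equivalently, that the quasi-coincidence relation $x_{r}q\mu $ (meaning $\mu (x)+r>1$) is precisely $x_{r}q_{\delta }\mu $ when $\delta =0.5$ --- which is immediate from the definitions in Section~3.
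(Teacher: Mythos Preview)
Your proposal is correct and matches the paper's approach exactly: the paper derives this Corollary simply by stating ``if we take $\gamma =0$ and $\delta =0.5$ in the above theorem'' before the statement, with no further argument. Your additional verification that $2\delta =1+\gamma$ holds and that the level-set notations specialize correctly just makes explicit what the paper leaves implicit.
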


\begin{definition}
A fuzzy subset $\mu $ of an LA-semigroup $S$ is called \ an \ $\left( \in
_{\gamma },\in _{\gamma }\vee q_{\delta }\right) $ fuzzy bi- ideal if for
all $a,b,s\in S$ and $t,r\in (\gamma ,1]$ $\ a_{t}\in _{\gamma }\mu ,$ $%
b_{r}\in _{r}\mu $ implies that

(i) $\left( ab\right) _{t\wedge r}\in _{\gamma }\vee q_{\delta }\mu .$

(ii) $\left( \left( as\right) b\right) _{t\wedge r}\in _{\gamma }\vee
q_{\delta }\mu .$
\end{definition}

\begin{remark}
Every fuzzy bi- ideal and every $\left( \in ,\in \vee q\right) $-fuzzy bi-
ideal is an $\left( \in _{\gamma },\in _{\gamma }\vee q_{\delta }\right) $%
-fuzzy bi- ideal but the converse is not true.
\end{remark}

\begin{example}
\begin{equation*}
\begin{tabular}{l|llll}
${\ast }$ & $1$ & $2$ & $3$ & $4$ \\ \hline
$1$ & $3$ & $2$ & $3$ & $2$ \\ 
$2$ & $2$ & $2$ & $2$ & $2$ \\ 
$3$ & $2$ & $2$ & $2$ & $2$ \\ 
$4$ & $2$ & $2$ & $2$ & $2$%
\end{tabular}%
\end{equation*}%
`Define fuzzy subset $\mu $ of $S$ by

$\mu (1)=0.4,\mu (2)=0.5,\mu (3)=0.35,\mu (4)=0.$

Then,

$(i)$ $\mu $ is an $\left( \in _{0.3},\in _{0.3}\vee q_{0.35}\right) $-fuzzy
bi-ideal

$\left( ii\right) $ $\mu $ is not an $\left( \in ,\in \vee q\right) $-fuzzy
bi-ideal\ because $1_{0.4}\in \mu ,$ but $\left( \left( 1\ast 3\right) \ast
1\right) _{0.6}\overline{\in \vee q}\mu $

$\left( iii\right) $ $\mu $ is not fuzzy bi-ideal
\end{example}

\begin{theorem}
Let $A$ be a non-empty subset of $S.$ Then, $A$ is a bi-ideal of $S$ if and
only if the fuzzy subset $\mu $\ of $S$ defined by%
\begin{equation*}
\mu (a)=\QDATOPD\{ . {\geq \delta \text{ if }a\in A}{\leq \gamma \text{ if }%
a\notin A}
\end{equation*}%
\ is an $\left( \in _{\gamma },\in _{\gamma }\vee q_{\delta }\right) $-fuzzy
bi-ideal of $S.$
\end{theorem}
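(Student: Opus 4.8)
The plan is to mimic the proof of Theorem 4, splitting into the ``if'' and ``only if'' directions, but now using the three defining inclusions of a bi-ideal, namely $A^{2}\subseteq A$ (the LA-subsemigroup condition) and $(AS)A\subseteq A$, rather than just the single LA-subsemigroup closure. For the forward direction, suppose $A$ is a bi-ideal of $S$ and let $\mu$ be the fuzzy subset defined by the displayed two-sided bound. To verify condition (i) of Definition~16 (the $\left(\in_{\gamma},\in_{\gamma}\vee q_{\delta}\right)$-fuzzy bi-ideal definition), take $a,b\in S$ and $t,r\in(\gamma,1]$ with $a_{t},b_{r}\in_{\gamma}\mu$; then $\mu(a)\geq t>\gamma$ and $\mu(b)\geq r>\gamma$ force $\mu(a)\geq\delta$, $\mu(b)\geq\delta$, hence $a,b\in A$, so $ab\in A^{2}\subseteq A$ and $\mu(ab)\geq\delta$. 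Then the usual case split on whether $t\wedge r\leq\delta$ or $t\wedge r>\delta$ shows $(ab)_{t\wedge r}\in_{\gamma}\vee q_{\delta}\mu$, exactly as in Theorem~4. For condition (ii), the same reasoning gives $a,b\in A$, and now $(as)b\in(AS)A\subseteq A$ for every $s\in S$, so $\mu((as)b)\geq\delta$, and the identical case split yields $((as)b)_{t\wedge r}\in_{\gamma}\vee q_{\delta}\mu$.

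For the converse, assume $\mu$ is an $\left(\in_{\gamma},\in_{\gamma}\vee q_{\delta}\right)$-fuzzy bi-ideal of $S$. I would first invoke (or re-derive, as in Theorem~6) the equivalent pointwise formulations: from condition (i), $\mu(ab)\vee\gamma\geq\mu(a)\wedge\mu(b)\wedge\delta$ for all $a,b\in S$, and from condition (ii), $\mu((as)b)\vee\gamma\geq\mu(a)\wedge\mu(b)\wedge\delta$ for all $a,b,s\in S$. Now to show $A$ is a bi-ideal: for $a,b\in A$ we have $\mu(a)\geq\delta$ and $\mu(b)\geq\delta$, so $\mu(ab)\vee\gamma\geq\delta\wedge\delta\wedge\delta=\delta$, and since $\gamma<\delta$ this forces $\mu(ab)\geq\delta$, hence $ab\in A$; thus $A^{2}\subseteq A$. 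Likewise, for $a,b\in A$ and $s\in S$, $\mu((as)b)\vee\gamma\geq\delta$ gives $\mu((as)b)\geq\delta$, so $(as)b\in A$, i.e. $(AS)A\subseteq A$. Together these say $A$ is a bi-ideal of $S$.

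The only genuine subtlety is that Definition~16 bundles two separate coincidence conditions, so one must be careful to derive and use both pointwise inequalities and to check that the bi-ideal axioms $A^{2}\subseteq A$ and $(AS)A\subseteq A$ each follow from the corresponding one; there is no interaction between them. I would therefore either cite Theorem~6 (for the LA-subsemigroup part) together with Theorem~33 (the pointwise characterization of $\left(\in_{\gamma},\in_{\gamma}\vee q_{\delta}\right)$-fuzzy generalized bi-ideals, which covers condition (ii)), or, to keep the argument self-contained, spell out the short contradiction argument: if some $a,b,s$ violated the relevant pointwise inequality, pick $t$ strictly between the two sides to get $a_{t},b_{t}\in_{\gamma}\mu$ but $(ab)_{t}$ (resp. $((as)b)_{t}$) $\overline{\in_{\gamma}\vee q_{\delta}}\mu$, contradicting the definition. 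In short, no step is hard; the ``main obstacle'' is merely organizational bookkeeping of the two clauses, and the cleanest write-up is simply ``Combine the proofs of Theorem~6 and Theorem~33,'' or ``Proof is similar to the proofs of Theorems~4, 6 and 33.''
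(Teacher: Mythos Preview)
Your proposal is correct and follows exactly the approach the paper intends: the paper's own proof of this theorem is simply ``Proof is similar to the proof of Theorem 4,'' and you have faithfully carried out that analogy, handling separately the LA-subsemigroup closure $A^{2}\subseteq A$ and the bi-ideal condition $(AS)A\subseteq A$, together with the same $t\wedge r\leq\delta$ versus $t\wedge r>\delta$ case split and the converse via the pointwise inequalities. Your write-up is in fact more detailed than what the paper supplies.
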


\begin{proof}
Proof is similar to the proof of Theorem 4.
\end{proof}

\begin{corollary}
Let $A$ be a non-empty subset of an LA-semigroup $S.$ Then, $A$ is a
bi-ideal of $S$ if and only if $\chi _{A},$ the characteristic function of $%
A $ is an $\left( \in _{\gamma },\in _{\gamma }\vee q_{\delta }\right) $%
-fuzzy bi-ideal of $S.$
\end{corollary}

\begin{theorem}
Let $2\delta =1+\gamma $ and $A$ be a bi-ideal of $S.$ Then, the fuzzy
subset $\mu $\ of $S$ defined by%
\begin{equation*}
\mu (a)=\QDATOPD\{ . {\geq \delta \text{ if }a\in A}{\leq \gamma \text{ if }%
a\notin A}
\end{equation*}%
\ is an $\left( q_{\delta },\in _{\gamma }\vee q_{\delta }\right) $-fuzzy
bi-ideal of $S.$
\end{theorem}

\begin{proof}
Proof is similar to the proof of Theorem 5.
\end{proof}

\begin{theorem}
A fuzzy subset $\mu $ of an LA-semigroup $S$ is an $\left( \in _{\gamma
},\in _{\gamma }\vee q_{\delta }\right) $-fuzzy bi-ideal of $S$ if and only
if for all $a,b,s\in S$ and $t,r\in (\gamma ,1]$

(i) $\mu \ (ab)\vee \gamma \geq \mu \left( a\right) \wedge \mu \left(
b\right) \wedge \delta $

(ii)\ $\mu \ (\left( as\right) b)\vee \gamma \geq \mu \left( a\right) \wedge
\mu \left( b\right) \wedge \delta .$
\end{theorem}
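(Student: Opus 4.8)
The plan is to observe that being an $\left( \in _{\gamma },\in _{\gamma }\vee q_{\delta }\right)$-fuzzy bi-ideal is, by definition, exactly the conjunction of two requirements: condition (i) of that definition --- that $a_{t},b_{r}\in _{\gamma }\mu$ implies $(ab)_{t\wedge r}\in _{\gamma }\vee q_{\delta }\mu$ --- is literally the definition of an $\left( \in _{\gamma },\in _{\gamma }\vee q_{\delta }\right)$-fuzzy LA-subsemigroup, and condition (ii) --- that $a_{t},b_{r}\in _{\gamma }\mu$ implies $((as)b)_{t\wedge r}\in _{\gamma }\vee q_{\delta }\mu$ --- is literally the definition of an $\left( \in _{\gamma },\in _{\gamma }\vee q_{\delta }\right)$-fuzzy generalized bi-ideal. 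So I would first record this equivalence, then simply invoke the inequality characterizations already proved, namely Theorem~6 for LA-subsemigroups and the analogous characterization of $\left( \in _{\gamma },\in _{\gamma }\vee q_{\delta }\right)$-fuzzy generalized bi-ideals established earlier.

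Concretely, for the forward direction I would assume $\mu$ is an $\left( \in _{\gamma },\in _{\gamma }\vee q_{\delta }\right)$-fuzzy bi-ideal; then $\mu$ meets condition (i) of the definition, hence is an $\left( \in _{\gamma },\in _{\gamma }\vee q_{\delta }\right)$-fuzzy LA-subsemigroup, and Theorem~6 gives $\mu(ab)\vee \gamma \geq \mu(a)\wedge \mu(b)\wedge \delta$ for all $a,b\in S$, which is (i) of the statement; likewise $\mu$ meets condition (ii), hence is an $\left( \in _{\gamma },\in _{\gamma }\vee q_{\delta }\right)$-fuzzy generalized bi-ideal, and the generalized-bi-ideal characterization gives $\mu((as)b)\vee \gamma \geq \mu(a)\wedge \mu(b)\wedge \delta$ for all $a,b,s\in S$, which is (ii). For the converse I would run the same two citations backwards: (i) together with Theorem~6 forces condition (i) of the definition, and (ii) together with the generalized-bi-ideal characterization forces condition (ii) of the definition, so $\mu$ is an $\left( \in _{\gamma },\in _{\gamma }\vee q_{\delta }\right)$-fuzzy bi-ideal.

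If a self-contained argument is wanted instead, I would mimic the proof of Theorem~6 twice. For necessity, suppose some $a,b,s$ violate (i) (respectively (ii)); splitting on whether $\mu(a)\wedge \mu(b)\leq \delta$ or $\mu(a)\wedge \mu(b)>\delta$, I pick $t$ with $\mu(ab)\vee \gamma < t < \mu(a)\wedge \mu(b)$ in the first case and $t=\delta$ in the second, so that $a_{t},b_{t}\in _{\gamma }\mu$ while $(ab)_{t}\,\overline{\in _{\gamma }\vee q_{\delta }}\,\mu$ (respectively $((as)b)_{t}\,\overline{\in _{\gamma }\vee q_{\delta }}\,\mu$), contradicting the definition. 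For sufficiency, from $a_{t},b_{r}\in _{\gamma }\mu$ I obtain $\mu(ab)\vee \gamma \geq t\wedge r\wedge \delta$ (and the same with $((as)b)$ in place of $ab$); if $t\wedge r\geq \delta$ this yields the quasi-coincidence $q_{\delta }$, while if $t\wedge r<\delta$ it yields membership $\in _{\gamma }$.

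There is essentially no obstacle: the statement is a bookkeeping corollary of the two earlier characterization theorems, since the bi-ideal notion is by construction the intersection of the LA-subsemigroup notion and the generalized-bi-ideal notion. The only minor point of care is that the quantifier ``$t,r\in (\gamma ,1]$'' written in the statement is vacuous for the displayed inequalities (which mention neither $t$ nor $r$), so it suffices to verify (i) and (ii) for all $a,b,s\in S$.
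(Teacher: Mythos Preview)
Your proposal is correct and matches the paper's approach: the paper's proof is simply the one-line remark ``Proof is similar to the proof of Theorem~6,'' and your self-contained sketch is precisely that mimicry, while your primary reduction to Theorem~6 and the generalized-bi-ideal characterization is an equally valid (and slightly tidier) packaging of the same argument. Your closing observation that the quantifier $t,r\in(\gamma,1]$ in the statement is vacuous for the displayed inequalities is also on point.
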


\begin{proof}
Proof is similar to the proof of Theorem 6.
\end{proof}

\begin{remark}
For any \ $\left( \in _{\gamma },\in _{\gamma }\vee q_{\delta }\right) $-
fuzzy bi-ideal $\mu $ of $S$ we can conclude

(i) if $\gamma =0,\delta =1$, then $\mu $ is fuzzy bi-idealof $S.$

(ii) if $\gamma =0,\delta =0.5,$ then $\mu $ is an $\left( \in ,\in \vee
q\right) $-fuzzy bi-ideal of $S.$
\end{remark}

\begin{theorem}
Let $\mu $ be a fuzzy subset of $S$. Then,

(i) $\mu $ is an $\left( \in _{\gamma },\in _{\gamma }\vee q_{\delta
}\right) $-fuzzy bi-ideal of $S$ if and only if $\ \mu _{r}^{\gamma }(\neq
\phi )$ is bi-ideal of $S$ for all $r$ $\in (\gamma ,\delta ].$

(ii) If $2\delta =1+\gamma .$ Then $\mu $ is an $\left( \in _{\gamma },\in
_{\gamma }\vee q_{\delta }\right) $-fuzzy bi-ideal of of $S$ if and only if $%
\ \mu _{r}^{\delta }(\neq \phi )$ is bi-ideal of of $S$ for all $r$ $\in
(\delta ,1].$

(iii) If $2\delta =1+\gamma .$ Then $\mu $ is an $\left( \in _{\gamma },\in
_{\gamma }\vee q_{\delta }\right) $-fuzzy bi-ideal of of $S$ if and only if $%
\ [\mu ]_{r}^{\delta }(\neq \phi )$ is bi-ideal of $S$ for all $r$ $\in
(\gamma ,1].$

(iv) $\mu $ is an $\left( \in _{\gamma },\in _{\gamma }\vee q_{\delta
}\right) $-fuzzy bi-ideal of $S$ if and only if $\ U(\mu ;r)(\neq \phi )$ is
a bi-ideal of $S$ for all $r$ $\in (\gamma ,\delta ].$
\end{theorem}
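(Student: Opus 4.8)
The proof follows the same template as Theorem~7, which characterizes $\left( \in _{\gamma },\in _{\gamma }\vee q_{\delta }\right) $-fuzzy LA-subsemigroups by their level sets, but now using the bi-ideal inequality from Theorem~24 together with the LA-subsemigroup inequality from Theorem~6. Recall that a fuzzy subset $\mu$ is an $\left( \in _{\gamma },\in _{\gamma }\vee q_{\delta }\right) $-fuzzy bi-ideal of $S$ precisely when both $\mu(ab)\vee\gamma\geq\mu(a)\wedge\mu(b)\wedge\delta$ and $\mu((as)b)\vee\gamma\geq\mu(a)\wedge\mu(b)\wedge\delta$ hold for all $a,b,s\in S$. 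The plan is to prove each of (i)--(iv) by showing separately that the level set in question is closed under products (the LA-subsemigroup condition $A^2\subseteq A$) and closed under the ``sandwich'' operation $(as)b$ (the bi-ideal condition $(AS)A\subseteq A$); in each part the two verifications are structurally identical, so I would write out the product case in detail and note that the $(as)b$ case is word-for-word the same.

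For part~(i), I would first assume $\mu$ is an $\left( \in _{\gamma },\in _{\gamma }\vee q_{\delta }\right) $-fuzzy bi-ideal and fix $r\in(\gamma,\delta]$ with $\mu_r^\gamma\neq\phi$. Taking $a,b\in\mu_r^\gamma$ gives $\mu(a)\geq r>\gamma$ and $\mu(b)\geq r>\gamma$; the inequality $\mu(ab)\vee\gamma\geq\mu(a)\wedge\mu(b)\wedge\delta\geq r\wedge r\wedge\delta=r$ (using $r\leq\delta$) forces $\mu(ab)\geq r$, so $ab\in\mu_r^\gamma$; similarly, for any $s\in S$, $\mu((as)b)\vee\gamma\geq r$ gives $(as)b\in\mu_r^\gamma$, so $\mu_r^\gamma$ is a bi-ideal. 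Conversely, if every nonempty $\mu_r^\gamma$ ($r\in(\gamma,\delta]$) is a bi-ideal and the inequality $\mu(ab)\vee\gamma\geq\mu(a)\wedge\mu(b)\wedge\delta$ failed for some $a,b$, I would pick $r\in(\gamma,\delta]$ with $\mu(ab)\vee\gamma<r\leq\mu(a)\wedge\mu(b)\wedge\delta$, note $a,b\in\mu_r^\gamma$ but $ab\notin\mu_r^\gamma$, a contradiction; the $(as)b$-inequality is handled the same way. Parts~(ii)--(iv) then follow exactly the arguments given in Theorem~7 for the corresponding level sets $\mu_r^\delta$ ($r\in(\delta,1]$, using the $q_\delta$-membership translation $\mu(x)+r>2\delta\Rightarrow\mu(x)>2\delta-r\geq\gamma$ and the hypothesis $2\delta=1+\gamma$), $[\mu]_r^\delta$ ($r\in(\gamma,1]$, splitting into the cases $r\in(\gamma,\delta]$ and $r\in(\delta,1]$), and $U(\mu;r)$ ($r\in(\gamma,\delta]$), with the bi-ideal inequalities replacing the single LA-subsemigroup inequality at each occurrence.

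I do not anticipate any genuine obstacle: every step is a direct transcription of the Theorem~7 proof with the extra sandwich clause appended in parallel. The only point that requires a little care is part~(iii), where in the case $r\in(\delta,1]$ one must keep track of whether the witnessing value lands above or below $\delta$ when deducing membership in $\mu_r^\delta$ rather than $[\mu]_r^\delta$ --- exactly the bookkeeping carried out in the proof of Theorem~7(iii) --- and one has to run that bookkeeping twice, once for $ab$ and once for $(as)b$. Accordingly I would simply write: ``The proof is similar to the proof of Theorem~7, using in addition the second inequality of Theorem~24 to handle products of the form $(as)b$.''
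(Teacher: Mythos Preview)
Your proposal is correct and matches the paper's own proof exactly: the paper simply writes ``Proof is similar to the proof of Theorem 7,'' and your detailed outline (with the added bi-ideal inequality from Theorem~24 handled in parallel) is precisely the argument one would fill in. There is nothing to add.
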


\begin{proof}
Proof is similar to the proof of Theorem 7.
\end{proof}

If we take $\gamma =0$ and $\delta =0.5$ in above \ theorem, we can conclude
the following results

\begin{corollary}
Let $\mu $ be a fuzzy set of $S.$Then

(i) $\mu $ is an $\left( \in ,\in \vee q\right) $-fuzzy bi-ideal of $S$ if
and only if $\ \mu _{r}(\neq \phi )$ is bi-ideal of $S$ for all $r$ $\in
(0,0.5].$

(ii) $\mu $ is an $\left( \in ,\in \vee q\right) $-fuzzy bi-ideal of $S$ if
and only if $\ Q(\mu ;r)(\neq \Phi )$ is bi-ideal of $S$ for all $r$ $\in
(0.5,1].$

(iii) $\mu $ is an $\left( \in ,\in \vee q\right) $-fuzzy bi-ideal of $S$ if
and only if $\ [\mu ]_{r}(\neq \Phi )$ is bi-ideal of $S$ for all $r$ $\in
(0,1].$

(iv) $\mu $ is an $\left( \in ,\in \vee q\right) $-fuzzy bi-ideal of $S$ if
and only if $U(\mu ;r)(\neq \phi )$ is a bi-ideal of $S$ for all $r$ $\in
(0,0.5].$
\end{corollary}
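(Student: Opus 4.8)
The plan is to derive the corollary as a direct specialization of the preceding theorem, which describes $\left( \in _{\gamma },\in _{\gamma }\vee q_{\delta }\right)$-fuzzy bi-ideals of $S$ through the four families of level sets $\mu _{r}^{\gamma }$, $\mu _{r}^{\delta }$, $[\mu ]_{r}^{\delta }$, and $U(\mu ;r)$, by setting $\gamma =0$ and $\delta =0.5$. First I would record that this choice of parameters is legitimate for every clause of that theorem: one has $\gamma <\delta$, and $2\delta =1=1+\gamma$, so the side hypothesis $2\delta =1+\gamma$ used in parts (ii) and (iii) holds automatically. I would also invoke the remark above, which states that an $\left( \in _{0},\in _{0}\vee q_{0.5}\right)$-fuzzy bi-ideal of $S$ is precisely an $\left( \in ,\in \vee q\right)$-fuzzy bi-ideal of $S$. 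After that, the whole argument reduces to matching the parametrized level sets of the theorem with the classical notations $\mu _{r}$, $Q(\mu ;r)$, $[\mu ]_{r}$, and $U(\mu ;r)$.

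Next I would carry out those identifications one at a time. For (i): when $\gamma =0$, the set $\mu _{r}^{\gamma }=\{x\in S:\mu (x)\geq r>\gamma \}$ is simply $\{x\in S:\mu (x)\geq r\}=\mu _{r}$ for each $r\in (0,0.5]=(\gamma ,\delta ]$, so part (i) of the theorem becomes (i) here; the same computation gives $U(\mu ;r)=\mu _{r}$ on $(0,0.5]$, so part (iv) of the theorem becomes (iv) here — in fact (i) and (iv) are one and the same assertion written in two notations. For (ii): since $2\delta =1$, the set $\mu _{r}^{\delta }=\{x\in S:\mu (x)+r>2\delta \}$ equals $\{x\in S:\mu (x)+r>1\}=Q(\mu ;r)$ for $r\in (0.5,1]=(\delta ,1]$, and part (ii) of the theorem becomes (ii). For (iii): $[\mu ]_{r}^{\delta }=\{x\in S:\mu (x)\geq r>\gamma \text{ or }\mu (x)+r>2\delta \}$ reduces to $\{x\in S:\mu (x)\geq r\text{ or }\mu (x)+r>1\}=[\mu ]_{r}$ for $r\in (0,1]=(\gamma ,1]$, which turns part (iii) of the theorem into (iii).

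I do not expect a genuine obstacle here: the corollary is a routine reading-off of the general theorem at special parameter values. The only point deserving a line of care is confirming that the parameter constraints $\gamma <\delta$ and $2\delta =1+\gamma$ are met at $\gamma =0$, $\delta =0.5$ (so that each of the four parts of the underlying theorem really does apply), together with the trivial observation that the nonemptiness conditions transfer verbatim because the specialized level sets coincide with the classical ones.
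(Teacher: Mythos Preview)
Your proposal is correct and follows exactly the paper's approach: the paper simply remarks that taking $\gamma =0$ and $\delta =0.5$ in the preceding theorem yields this corollary, and your argument spells out precisely that specialization, including the check that $2\delta =1+\gamma$ holds.
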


\begin{definition}
A fuzzy subset $\mu $ of an LA-semigroup $c$ is called an $\left( \in
_{\gamma },\in _{\gamma }\vee q_{\delta }\right) $-fuzzy interior ideal of $%
c $ if for all $a,b,c\in S$ and $t,r\in (\gamma ,1],$ the following
conditions hold:

$(i)$ $a_{t},b_{r}\in _{\gamma }\mu $ implies that $\left( ab\right)
_{t\wedge r}\in _{\gamma }\vee q_{\delta }\mu .$

$(ii)$ $c_{t}\in _{\gamma }\mu $ implies that $\left( \left( ac\right)
b\right) _{t}\in _{\gamma }\vee q_{\delta }\mu .$
\end{definition}

\begin{theorem}
Let $A$ be a non empty subset of $S.$ Then, $A$ is a interior ideal of $S$
if and only if the fuzzy subset $\mu $\ of $S$ defined by%
\begin{equation*}
\mu (x)=\QDATOPD\{ . {\geq \delta \text{ if }x\in A}{\leq \gamma \text{ if }%
x\notin A}
\end{equation*}%
\ is an $\left( \in _{\gamma },\in _{\gamma }\vee q_{\delta }\right) $-fuzzy
interior ideal of $S.$
\end{theorem}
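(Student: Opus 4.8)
The plan is to follow the template of the proof of Theorem 4 (the corresponding statement for LA-subsemigroups), the only new feature being that the definition of an $\left( \in _{\gamma },\in _{\gamma }\vee q_{\delta }\right)$-fuzzy interior ideal carries a second clause, namely that $c_{t}\in _{\gamma }\mu $ forces $\left( \left( ac\right) b\right) _{t}\in _{\gamma }\vee q_{\delta }\mu $ for all $a,b\in S$. I would check the two clauses separately in each direction; the recurring device is that the prescribed $\mu $ takes no value strictly between $\gamma $ and $\delta $, so that $\mu (x)\geq \delta $ and $x\in A$ are interchangeable.

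For the forward implication, assume $A$ is an interior ideal, i.e. $A^{2}\subseteq A$ and $\left( SA\right) S\subseteq A$. For clause (i): if $a_{t},b_{r}\in _{\gamma }\mu $ with $t,r\in (\gamma ,1]$, then $\mu (a)\geq t>\gamma $ and $\mu (b)\geq r>\gamma $, hence $\mu (a),\mu (b)\geq \delta $, hence $a,b\in A$, hence $ab\in A$ and $\mu (ab)\geq \delta $; splitting on $t\wedge r\leq \delta $ versus $t\wedge r>\delta $ exactly as in Theorem 4 yields $\left( ab\right) _{t\wedge r}\in _{\gamma }\vee q_{\delta }\mu $. For clause (ii): if $c_{t}\in _{\gamma }\mu $ with $t\in (\gamma ,1]$ and $a,b\in S$, then $\mu (c)\geq t>\gamma $, so $\mu (c)\geq \delta $ and $c\in A$, hence $\left( ac\right) b\in \left( SA\right) S\subseteq A$ and $\mu (\left( ac\right) b)\geq \delta $; the same two-case split on $t$ versus $\delta $ gives $\left( \left( ac\right) b\right) _{t}\in _{\gamma }\vee q_{\delta }\mu $. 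Thus $\mu $ is an $\left( \in _{\gamma },\in _{\gamma }\vee q_{\delta }\right)$-fuzzy interior ideal of $S$.

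For the converse, assume $\mu $ is an $\left( \in _{\gamma },\in _{\gamma }\vee q_{\delta }\right)$-fuzzy interior ideal. Given $a,b\in A$, we have $\mu (a),\mu (b)\geq \delta >\gamma $, so $a_{\delta },b_{\delta }\in _{\gamma }\mu $; clause (i) gives $\left( ab\right) _{\delta }\in _{\gamma }\vee q_{\delta }\mu $, i.e. $\mu (ab)\geq \delta $ or $\mu (ab)+\delta >2\delta $, and in either case $\mu (ab)\geq \delta $, forcing $ab\in A$; so $A^{2}\subseteq A$. Given $s,s^{\prime }\in S$ and $a\in A$, we have $\mu (a)\geq \delta >\gamma $, so $a_{\delta }\in _{\gamma }\mu $; clause (ii), applied with the definition's $a,c,b$ taken to be $s,a,s^{\prime }$, gives $\left( \left( sa\right) s^{\prime }\right) _{\delta }\in _{\gamma }\vee q_{\delta }\mu $, hence $\mu (\left( sa\right) s^{\prime })\geq \delta $ and $\left( sa\right) s^{\prime }\in A$; so $\left( SA\right) S\subseteq A$. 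Hence $A$ is an interior ideal of $S$.

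There is essentially no obstacle: the argument is a bookkeeping exercise parallel to Theorem 4. The two points that need a little attention are matching the arguments of clause (ii) of the fuzzy definition to the shape $\left( sa\right) s^{\prime }$ of a generic element of $\left( SA\right) S$ (with the member of $A$ occupying the middle position), and remembering that the single threshold $t$ in clause (ii) makes the case split $t\leq \delta $ versus $t>\delta $ rather than a split on a meet. Alternatively one could first prove the inequality characterization $\mu (\left( ac\right) b)\vee \gamma \geq \mu (c)\wedge \delta $ alongside the LA-subsemigroup inequality, as was done for bi-ideals, and read off both directions from it; either route is routine.
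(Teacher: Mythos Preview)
Your proposal is correct and matches the paper's approach exactly: the paper's proof of this theorem consists solely of the line ``Proof is similar to the proof of Theorem 4,'' and you have carried out precisely that adaptation, handling the extra interior-ideal clause $(SA)S\subseteq A$ alongside the LA-subsemigroup clause. The only minor difference is that in the converse direction the paper's Theorem 4 invokes the inequality characterization $\mu(ab)\vee\gamma\geq\mu(a)\wedge\mu(b)\wedge\delta$ (i.e.\ forward-references Theorem 6), whereas you argue directly from the fuzzy-point definition by plugging in $t=r=\delta$; your route is self-contained and arguably cleaner, and you already flag the alternative at the end.
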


\begin{proof}
Proof is similar to the proof of Theorem 4.
\end{proof}

\begin{corollary}
Let $A$ be a non-empty subset of $S.$ Then, $A$ is a interior ideal of $S$
if and only if $\chi _{A},$ the characteristic function of $A$ is an $\left(
\in _{\gamma },\in _{\gamma }\vee q_{\delta }\right) $-fuzzy interior ideal
of $S.$
\end{corollary}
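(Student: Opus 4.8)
The plan is to read this corollary off the preceding theorem, observing that the characteristic function $\chi_A$ is simply one admissible choice of the fuzzy subset $\mu$ considered there. Since $\gamma,\delta\in[0,1]$ with $\gamma<\delta\leq 1$, we have $\chi_A(x)=1\geq\delta$ for every $x\in A$ and $\chi_A(x)=0\leq\gamma$ for every $x\notin A$; thus $\chi_A$ satisfies the conditions imposed on $\mu$ in that theorem. Consequently, if $A$ is an interior ideal of $S$, the preceding theorem immediately yields that $\chi_A$ is an $(\in_{\gamma},\in_{\gamma}\vee q_{\delta})$-fuzzy interior ideal of $S$.

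For the converse I would argue straight from the definition of an $(\in_{\gamma},\in_{\gamma}\vee q_{\delta})$-fuzzy interior ideal (the same conclusion also follows from the preceding theorem, since $\chi_A$ has exactly the shape of its $\mu$). Assume $\chi_A$ is an $(\in_{\gamma},\in_{\gamma}\vee q_{\delta})$-fuzzy interior ideal. For $a,b\in A$ we have $\chi_A(a)=\chi_A(b)=1\geq\delta>\gamma$, so $a_{\delta},b_{\delta}\in_{\gamma}\chi_A$, whence $(ab)_{\delta}\in_{\gamma}\vee q_{\delta}\chi_A$; this gives $\chi_A(ab)\geq\delta$ or $\chi_A(ab)+\delta>2\delta$, and in either case $\chi_A(ab)>0$, so $\chi_A(ab)=1$ and $ab\in A$; hence $A^2\subseteq A$. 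Similarly, for $a,b\in S$ and $c\in A$ we have $c_{\delta}\in_{\gamma}\chi_A$, so $((ac)b)_{\delta}\in_{\gamma}\vee q_{\delta}\chi_A$, which as above forces $\chi_A((ac)b)=1$ and $(ac)b\in A$; hence $(SA)S\subseteq A$. Therefore $A$ is an interior ideal of $S$.

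I do not expect any genuine obstacle; this is the exact analogue of the corollary that follows the LA-subsemigroup version of the theorem. The only point requiring a little care is a careful use of the standing hypothesis $\gamma<\delta$ (so that $0\leq\gamma<\delta\leq 1$): it is what guarantees that $\delta$ is a legitimate value for a fuzzy point, that $1\geq\delta$ and $0\leq\gamma$ make $\chi_A$ fit the template of $\mu$, and, most importantly, that $(ab)_{\delta}\in_{\gamma}\vee q_{\delta}\chi_A$ forces $\chi_A(ab)>0$ and hence $ab\in A$ — and likewise for $(ac)b$.
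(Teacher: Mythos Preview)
Your proposal is correct and matches the paper's intent: the corollary is stated in the paper without proof, immediately after the ``if and only if'' theorem for interior ideals, and is meant to be read off from it exactly as you do, since $\chi_A$ is the special case $\mu(a)=1\geq\delta$ for $a\in A$ and $\mu(a)=0\leq\gamma$ for $a\notin A$. Your additional direct argument for the converse is sound (and correctly uses $\gamma<\delta$ to force $\chi_A(ab)>0$), though unnecessary given that the preceding theorem is already biconditional.
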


\begin{theorem}
Let $2\delta =1+\gamma $ and $A$ be a bi-ideal of $S.$ Then, the fuzzy
subset $\mu $\ of $S$ defined by%
\begin{equation*}
\mu (a)=\QDATOPD\{ . {\geq \delta \text{ if }a\in A}{\leq \gamma \text{ if }%
a\notin A}
\end{equation*}%
\ is an $\left( q_{\delta },\in _{\gamma }\vee q_{\delta }\right) $-fuzzy
bi-ideal of $S.$
\end{theorem}

\begin{proof}
Proof is similar to the proof of Theorem 5.
\end{proof}

\begin{theorem}
A fuzzy subset $\mu $ of an LA-semigroup $S$ is an $\left( \in _{\gamma
},\in _{\gamma }\vee q_{\delta }\right) $-fuzzy interior ideal of $S$ if and
only if \ for all $a,b,c\in S,$

(i)$\ \mu (ab)\vee \gamma \geq \mu (a)\wedge \mu (b)\wedge \delta .$

(ii) $\mu (\left( ac\right) b)\vee \gamma \geq \mu (c)\wedge \delta .$
\end{theorem}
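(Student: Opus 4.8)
The plan is to mimic the proof of Theorem 6, handling the two defining clauses of an interior ideal one at a time. Clause $(i)$ in the definition of an $\left( \in _{\gamma },\in _{\gamma }\vee q_{\delta }\right) $-fuzzy interior ideal is exactly the LA-subsemigroup condition, so the equivalence between that clause and inequality $(i)$ is word-for-word the content of Theorem 6 and requires no new argument. The remaining work is to show that clause $(ii)$, which only involves the fuzzy point with support $c$ (the elements $a,b$ being arbitrary), is equivalent to inequality $(ii)$, and this is a direct transcription of the single-variable case of the Theorem 6 argument.

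For the forward implication, assume $\mu $ is an $\left( \in _{\gamma },\in _{\gamma }\vee q_{\delta }\right) $-fuzzy interior ideal. Inequality $(i)$ is inherited verbatim from Theorem 6. For inequality $(ii)$ I would argue by contradiction: if $\mu ((ac)b)\vee \gamma <\mu (c)\wedge \delta $ for some $a,b,c\in S$, split into the cases $\mu (c)\leq \delta $ and $\mu (c)>\delta $. In the first case choose $t$ with $\mu ((ac)b)\vee \gamma <t\leq \mu (c)\wedge \delta $; in the second take $t=\delta $. In both cases $t\in (\gamma ,1]$ and $\mu (c)\geq t>\gamma $, so $c_{t}\in _{\gamma }\mu $; but $\mu ((ac)b)<t$ and $\mu ((ac)b)+t<\delta +\delta =2\delta $ give $((ac)b)_{t}\overline{\in }_{\gamma }\mu $ and $((ac)b)_{t}\overline{q}_{\delta }\mu $, that is $((ac)b)_{t}\overline{\in _{\gamma }\vee q_{\delta }}\mu $, contradicting clause $(ii)$ of the definition. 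Hence $(ii)$ holds.

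For the converse, assume $(i)$ and $(ii)$. The fact that $a_{t},b_{r}\in _{\gamma }\mu $ forces $(ab)_{t\wedge r}\in _{\gamma }\vee q_{\delta }\mu $ is precisely the converse half of Theorem 6, using $(i)$ together with the dichotomy $t\wedge r\geq \delta $ versus $t\wedge r<\delta $. For the interior clause, let $c_{t}\in _{\gamma }\mu $, i.e. $\mu (c)\geq t>\gamma $, and let $a,b\in S$ be arbitrary; by $(ii)$, $\mu ((ac)b)\vee \gamma \geq \mu (c)\wedge \delta \geq t\wedge \delta $. If $t\leq \delta $ this yields $\mu ((ac)b)\geq t>\gamma $, so $((ac)b)_{t}\in _{\gamma }\mu $; if $t>\delta $ it yields $\mu ((ac)b)\geq \delta $, whence $\mu ((ac)b)+t>\delta +\delta =2\delta $ and $((ac)b)_{t}q_{\delta }\mu $. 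Either way $((ac)b)_{t}\in _{\gamma }\vee q_{\delta }\mu $, so $\mu $ satisfies both clauses of the definition.

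I do not expect any genuine obstacle: the proof is the Theorem 6 argument run twice, once for $ab$ and once for $(ac)b$. The only point needing a moment's care is that in inequality $(ii)$ the right-hand side depends on $c$ alone, so the reductions on the support $c$ must be kept separate from those on $a$ and $b$, and the case split for clause $(ii)$ is governed by a single scalar $t$ rather than by $t\wedge r$.
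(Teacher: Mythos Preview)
Your proposal is correct and follows exactly the approach the paper takes: the paper's own proof is simply ``Proof is similar to the proof of Theorem 6,'' and you have carried out precisely that transcription, handling clause~(i) by direct appeal to Theorem~6 and adapting the same two-case contradiction/dichotomy argument to the single-parameter clause~(ii). There is nothing to add.
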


\begin{proof}
Proof is similar to the proof of Theorem 6.
\end{proof}

\begin{theorem}
Let $\mu $ be a fuzzy subset of $S$. Then,

(i) $\mu $ is an $\left( \in _{\gamma },\in _{\gamma }\vee q_{\delta
}\right) $-fuzzy interior ideal of $S$ if and only if$\ \mu _{r}^{\gamma
}(\neq \phi )$ is a interior ideal of $S$ for all $r$ $\in (\gamma ,\delta
]. $

(ii) If $2\delta =1+\gamma ,$ then $\mu $ is an $\left( \in _{\gamma },\in
_{\gamma }\vee q_{\delta }\right) $-fuzzy interior ideal of $S$ if and only
if $\ \mu _{r}^{\delta }(\neq \phi )$ is interior ideal of $S$ for all $r$ $%
\in (\delta ,1].$

(iii) If $2\delta =1+\gamma ,$ then $\mu $ is an $\left( \in _{\gamma },\in
_{\gamma }\vee q_{\delta }\right) $-fuzzy interior ideal of $S$ if and only
if $\ [\mu ]_{r}^{\delta }(\neq \phi )$ is interior ideal of $S$ for all $r$ 
$\in (\gamma ,1].$

(iv) (iv) $\mu $ is an $\left( \in _{\gamma },\in _{\gamma }\vee q_{\delta
}\right) $-fuzzy interior ideal of $S$ if and only if $\ U(\mu ;r)(\neq \phi
)$ is a interior ideal of $S$ for all $r$ $\in (r,\delta ].$
\end{theorem}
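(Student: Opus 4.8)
The plan is to mimic exactly the structure of the proof of Theorem 7, handling each of the four equivalences in turn, with the only new ingredient being the defining inequalities of an $\left( \in _{\gamma },\in _{\gamma }\vee q_{\delta }\right)$-fuzzy interior ideal established in the preceding theorem, namely $\mu (ab)\vee \gamma \geq \mu (a)\wedge \mu (b)\wedge \delta$ and $\mu (\left( ac\right) b)\vee \gamma \geq \mu (c)\wedge \delta$ for all $a,b,c\in S$. For part (i), I would first assume $\mu$ is an $\left( \in _{\gamma },\in _{\gamma }\vee q_{\delta }\right)$-fuzzy interior ideal and take $r\in (\gamma ,\delta ]$ with $\mu _{r}^{\gamma }\neq \phi$. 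Closure under multiplication follows as in Theorem 7(i): if $a,b\in \mu _{r}^{\gamma }$ then $\mu (ab)\vee \gamma \geq r\wedge r\wedge \delta =r$, and since $r>\gamma$ we get $\mu (ab)\geq r$, so $ab\in \mu _{r}^{\gamma }$. For the interior condition, if $a,b\in S$ and $c\in \mu _{r}^{\gamma }$ then $\mu (\left( ac\right) b)\vee \gamma \geq \mu (c)\wedge \delta \geq r\wedge \delta =r$, hence $\left( ac\right) b\in \mu _{r}^{\gamma }$; so $\mu _{r}^{\gamma }$ is an interior ideal. Conversely, if some inequality fails I would pick a suitable $r\in (\gamma ,\delta ]$ strictly between the two sides and derive a contradiction to closure (for the first inequality) or to the interior property (for the second), exactly as in Theorem 7.

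For part (ii), under the hypothesis $2\delta =1+\gamma$ and $r\in (\delta ,1]$, membership $a\in \mu _{r}^{\delta }$ means $a_{r}q_{\delta }\mu$, i.e.\ $\mu (a)+r>2\delta$, so $\mu (a)>2\delta -r\geq 2\delta -1=\gamma$; note also $2\delta -r<\delta$. For $a,b\in \mu _{r}^{\delta }$ the subsemigroup inequality gives $\mu (ab)\geq \mu (a)\wedge \mu (b)\wedge \delta >(2\delta -r)\wedge (2\delta -r)\wedge \delta =2\delta -r$, hence $\mu (ab)+r>2\delta$ and $ab\in \mu _{r}^{\delta }$; for $c\in \mu _{r}^{\delta }$ and arbitrary $a,b\in S$, the interior inequality gives $\mu (\left( ac\right) b)\geq \mu (c)\wedge \delta >(2\delta -r)\wedge \delta =2\delta -r$, so $\left( ac\right) b\in \mu _{r}^{\delta }$. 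The converse again argues by contradiction, selecting $r\in (\gamma ,\delta ]$ with $\mu (ab)\vee \gamma <r\leq \mu (a)\wedge \mu (b)\wedge \delta$ (respectively $\mu (\left( ac\right) b)\vee \gamma <r\leq \mu (c)\wedge \delta$) and observing that $a,b\in \mu _{r}^{\delta }$ while $ab\notin \mu _{r}^{\delta }$, contradicting the assumed closure. Part (iii) is the combination: one splits into the cases $r\in (\gamma ,\delta ]$ and $r\in (\delta ,1]$ and checks, using $2\delta -r\geq \delta \geq r$ in the first case and $2\delta -r<\delta <r$ in the second, that both $\mu (ab)$ and $\mu (\left( ac\right) b)$ exceed the relevant threshold, just as in the proof of Theorem 7(iii) but with the extra interior inequality carried along. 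Part (iv) is the simplest: for $r\in (\gamma ,\delta ]$, $a,b\in U(\mu ;r)$ forces $\mu (ab)\vee \gamma \geq r\wedge \delta =r$ so $\mu (ab)\geq r$, and $c\in U(\mu ;r)$ forces $\mu (\left( ac\right) b)\vee \gamma \geq r\wedge \delta =r$ so $\left( ac\right) b\in U(\mu ;r)$; the converse is the usual contradiction argument.

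There is no genuine obstacle here—every step is a routine adaptation of Theorem 7 once the two characterizing inequalities are in hand; indeed the paper itself will almost certainly dispatch this with the phrase \emph{Proof is similar to the proof of Theorem 7}. The only point requiring a little care is bookkeeping in parts (ii) and (iii): one must consistently use the identity $2\delta -1=\gamma$ (equivalently $2\delta =1+\gamma$) to translate between $q_{\delta }$-membership and $\in _{\gamma }$-membership, and one must verify the two-line inequalities for \emph{both} the product $ab$ and the element $\left( ac\right) b$, since an interior ideal has two defining closure conditions rather than one. The statement of (iv) as printed has a typo—the range should read $r\in (\gamma ,\delta ]$, not $r\in (r,\delta ]$—and I would silently correct this; analogously the symbol $c$ appearing in the Definition of interior ideal (``LA-semigroup $c$'') should be $S$, and the quantifier ``for all $a,b,c\in S$'' already makes condition (ii) the correct one. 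With those cosmetic fixes, the proof is a direct transcription of the subsemigroup case.
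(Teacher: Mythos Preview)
Your proposal is correct and follows exactly the approach the paper takes: the paper's own proof reads in full ``Proof is similar to the proof of theorem 7,'' and your sketch faithfully carries out that transcription, handling both defining inequalities of an interior ideal and the level-set bookkeeping just as Theorem~7 does for LA-subsemigroups. Your observations about the typos in part (iv) and the definition are accurate and do not affect the argument.
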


\begin{proof}
Proof is similar to the proof of theorem 7.
\end{proof}

If we take $\gamma =0$ and $\delta =0.5$ in above \ theorem, we can conclude
the following results

\begin{corollary}
Let $\mu $\ be a fuzzy set of $c.$ Then,

(i) $\mu $ is an $\left( \in ,\in \vee q\right) $-fuzzy interior ideal of $%
S, $ if and only if $\ \mu _{r}(\neq \phi )$ interior ideal of $S,$ for all $%
r$ $\in (0,0.5].$

(ii) $\mu $ is an $\left( \in ,\in \vee q\right) $-fuzzy interior ideal of $%
S $ if and only if $\ Q(\mu ;r)(\neq \Phi )$ interior ideal of $S,$ for all $%
r$ $\in (0.5,1].$

(iii) $\mu $ is an $\left( \in ,\in \vee q\right) $-fuzzy interior ideal of $%
S$ if and only if $\ [\mu ]_{r}(\neq \Phi )$ interior ideal of $S$,for all $%
r $ $\in (0,1].$

(iv) $\mu $ is an $\left( \in ,\in \vee q\right) $-fuzzy interior ideal of $%
S $ if and only if $\ U(\mu ;r)(\neq \phi )$ is a interior ideal of $S$ for
all $r$ $\in (0,0.5].$
\end{corollary}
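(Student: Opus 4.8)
The plan is to read off this corollary as the special case $\gamma =0$, $\delta =0.5$ of the immediately preceding theorem, which characterizes $\left( \in _{\gamma },\in _{\gamma }\vee q_{\delta }\right) $-fuzzy interior ideals of $S$ by the properties of their level sets. The first point is that these values are admissible in every clause of that theorem: indeed $\gamma =0<0.5=\delta $, and the extra hypothesis $2\delta =1+\gamma $ demanded in clauses (ii) and (iii) holds automatically, since $2(0.5)=1=1+0$. Moreover, by the definitions recalled at the start of this section, an $\left( \in _{\gamma },\in _{\gamma }\vee q_{\delta }\right) $-fuzzy interior ideal with $\gamma =0$ and $\delta =0.5$ is precisely an $\left( \in ,\in \vee q\right) $-fuzzy interior ideal of $S$, so the left-hand side of each equivalence in the theorem becomes the left-hand side of the corresponding equivalence in the corollary.

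What then remains is to check that the four families of subsets occurring in the theorem collapse, under these parameter values, to the sets named in the corollary. For $r>0$ the relation $x_{r}\in _{\gamma }\mu $ with $\gamma =0$ is simply $\mu (x)\geq r$, so $\mu _{r}^{\gamma }$ becomes the ordinary level set $\mu _{r}=\{ x\in S:\mu (x)\geq r\} $; likewise $U(\mu ;r)$ involves neither $\gamma $ nor $\delta $ and is unchanged. The relation $x_{r}q_{\delta }\mu $ with $\delta =0.5$ is $\mu (x)+r>1$, so $\mu _{r}^{\delta }$ becomes the quasi-coincidence level set $Q(\mu ;r)=\{ x\in S:\mu (x)+r>1\} $; and since $[\mu ]_{r}^{\delta }=\mu _{r}^{\gamma }\cup \mu _{r}^{\delta }$, it becomes $[\mu ]_{r}=\{ x\in S:x_{r}\in \vee q\,\mu \} $. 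Inserting these identifications into clauses (i)--(iv) of the preceding theorem gives exactly clauses (i)--(iv) of the corollary, once one notes that the index ranges match: $(\gamma ,\delta ]=(0,0.5]$ in (i) and (iv), $(\delta ,1]=(0.5,1]$ in (ii), and $(\gamma ,1]=(0,1]$ in (iii).

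There is no genuine obstacle here: the whole content is the routine verification just described, and the corollary could have been stated without proof, exactly as the analogous corollaries for LA-subsemigroups, left(right) ideals, generalized bi-ideals and bi-ideals were deduced from their respective level-set theorems. If a self-contained proof were preferred, one would instead repeat the argument of the preceding theorem with $\gamma =0$ and $\delta =0.5$ throughout, which reproduces the same computation with no new difficulty.
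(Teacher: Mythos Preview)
Your proposal is correct and matches the paper's approach exactly: the paper states this corollary without proof, prefacing it only with the remark that taking $\gamma =0$ and $\delta =0.5$ in the preceding theorem yields the result. Your verification that these parameter values are admissible (including the check $2\delta =1+\gamma$) and that the level sets $\mu _{r}^{\gamma },\mu _{r}^{\delta },[\mu ]_{r}^{\delta },U(\mu ;r)$ specialize to $\mu _{r},Q(\mu ;r),[\mu ]_{r},U(\mu ;r)$ is precisely the routine substitution the paper leaves implicit.
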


\begin{theorem}
Every $\left( \in _{\gamma },\in _{\gamma }\vee q_{\delta }\right) $-fuzzy
ideal of an LA-semigroup is $S$ is an $\left( \in _{\gamma },\in _{\gamma
}\vee q_{\delta }\right) $-fuzzy interior ideal of $S.$
\end{theorem}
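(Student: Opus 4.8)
The plan is to reduce everything to the pointwise characterizations already proved in the previous sections. Recall that an $\left( \in _{\gamma },\in _{\gamma }\vee q_{\delta }\right) $-fuzzy ideal of $S$ is by definition a fuzzy subset $\mu $ which is both an $\left( \in _{\gamma },\in _{\gamma }\vee q_{\delta }\right) $-fuzzy left ideal and an $\left( \in _{\gamma },\in _{\gamma }\vee q_{\delta }\right) $-fuzzy right ideal, so by the characterization theorem for $\left( \in _{\gamma },\in _{\gamma }\vee q_{\delta }\right) $-fuzzy left(right) ideals we may use, for all $a,s\in S$, the inequalities $\mu (sa)\vee \gamma \geq \mu (a)\wedge \delta $ and $\mu (as)\vee \gamma \geq \mu (a)\wedge \delta $. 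On the other hand, by the characterization theorem for $\left( \in _{\gamma },\in _{\gamma }\vee q_{\delta }\right) $-fuzzy interior ideals, it suffices to verify the two conditions $\mathrm{(i)}$ $\mu (ab)\vee \gamma \geq \mu (a)\wedge \mu (b)\wedge \delta $ and $\mathrm{(ii)}$ $\mu ((ac)b)\vee \gamma \geq \mu (c)\wedge \delta $ for all $a,b,c\in S$.

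For $\mathrm{(i)}$, I would apply the left-ideal inequality with $s=a$ to get $\mu (ab)\vee \gamma \geq \mu (b)\wedge \delta \geq \mu (a)\wedge \mu (b)\wedge \delta $; this is immediate and needs no case analysis. For $\mathrm{(ii)}$, I would first apply the right-ideal inequality to the pair $(ac,b)$, obtaining $\mu ((ac)b)\vee \gamma \geq \mu (ac)\wedge \delta $, and then the left-ideal inequality to the pair $(a,c)$, obtaining $\mu (ac)\vee \gamma \geq \mu (c)\wedge \delta $. Meeting the second inequality with $\delta $ and using $\gamma \leq \delta $ together with the distributive law in $[0,1]$ gives $(\mu (ac)\wedge \delta )\vee \gamma \geq \mu (c)\wedge \delta $; joining the first inequality with $\gamma $ then yields $\mu ((ac)b)\vee \gamma \geq (\mu (ac)\wedge \delta )\vee \gamma \geq \mu (c)\wedge \delta $, which is exactly $\mathrm{(ii)}$. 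Hence $\mu $ is an $\left( \in _{\gamma },\in _{\gamma }\vee q_{\delta }\right) $-fuzzy interior ideal of $S$.

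The only mildly delicate point, and the step I expect to be the main obstacle in the write-up, is this lattice bookkeeping with the ``$\vee \gamma $'' term that appears on both sides of the ideal inequalities; note that no use of a left identity or of the medial/paramedial laws is needed, only the iterated application of the left and right ideal conditions and the ordering $\gamma <\delta $. If one prefers to avoid the lattice manipulation, an equivalent route is to split on whether $\mu (c)\leq \gamma $, in which case $\mathrm{(ii)}$ is trivial since its right-hand side is $\leq \gamma $, or $\mu (c)>\gamma $, in which case $\mu (c)\wedge \delta >\gamma $ forces the ``$\vee \gamma $'' terms to be absorbed so that the chain of inequalities becomes literal; I expect the final argument to use whichever of these two presentations is cleaner.
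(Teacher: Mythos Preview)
Your proposal is correct and follows essentially the same route as the paper: both verify the two pointwise conditions for an $\left( \in _{\gamma },\in _{\gamma }\vee q_{\delta }\right) $-fuzzy interior ideal by chaining the right-ideal and left-ideal inequalities, handling the extra $\vee \gamma $ via the distributive identity $(\mu (ac)\wedge \delta )\vee \gamma =(\mu (ac)\vee \gamma )\wedge \delta $ (which holds because $\gamma <\delta $). The only cosmetic difference is that for condition~(i) the paper invokes the right-ideal inequality $\mu (xy)\vee \gamma \geq \mu (x)\wedge \delta $ rather than the left-ideal one you chose, but the argument is otherwise identical.
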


\begin{proof}
Let $\mu $ is an $\left( \in _{\gamma },\in _{\gamma }\vee q_{\delta
}\right) $-fuzzy ideal of $S.$ Then, 
\begin{equation*}
\ \mu (xy)\vee \gamma \geq \mu (x)\wedge \delta \geq \mu (x)\wedge \mu
(y)\wedge \delta .
\end{equation*}%
Thus, $\mu $ is an $\left( \in _{\gamma },\in _{\gamma }\vee q_{\delta
}\right) $-fuzzy LA-subsemigroup of $S.$ Also, for all $x,a,y\in S,$ we have$%
\ $%
\begin{eqnarray*}
\mu (\left( xa\right) y)\vee \gamma &=&\left( \mu (\left( xa\right) y)\vee
\gamma \right) \vee \gamma \geq \left( \mu (xa)\wedge \delta \right) \vee
\gamma \\
&=&\left( \mu (xa)\vee \gamma \right) \wedge \delta \geq \mu (a)\wedge
\delta \wedge \delta =\mu (a)\wedge \delta .
\end{eqnarray*}%
Hence, $\mu $\ is an $\left( \in _{\gamma },\in _{\gamma }\vee q_{\delta
}\right) $-fuzzy interior ideal of $S.$
\end{proof}

\begin{definition}
A fuzzy subset $\mu $ of an LA-semigroup $S$ is called an $\left( \in
_{\gamma },\in _{\gamma }\vee q_{\delta }\right) $-fuzzy quasi-ideal of $S,$%
\ if it satisfies,

$\mu (x)\vee \gamma \geq \left( \mu \circ 1\right) (x)\wedge \left( 1\circ
\mu \right) (x)\wedge \delta ,$

where $1$ denotes the fuzzy subset of $S$ mapping every element of $S$ on $1$%
.
\end{definition}

\begin{theorem}
Let $\mu $ be an $\left( \in _{\gamma },\in _{\gamma }\vee q_{\delta
}\right) $-fuzzy quasi-ideal of $S,$\ then the set $\mu _{\gamma }=\{a\in
S\mid \mu (a)>\gamma \}$ is a quasi-ideal of $S.$
\end{theorem}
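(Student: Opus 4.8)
The plan is to show directly from the definition of an $\left( \in _{\gamma },\in _{\gamma }\vee q_{\delta }\right) $-fuzzy quasi-ideal that $\mu_{\gamma}=\{a\in S\mid \mu(a)>\gamma\}$ satisfies $\mu_{\gamma}S\cap S\mu_{\gamma}\subseteq \mu_{\gamma}$, which is the definition of a quasi-ideal given in the Preliminaries. So let $x\in \mu_{\gamma}S\cap S\mu_{\gamma}$. Then $x=as$ for some $a\in \mu_{\gamma}$, $s\in S$, and simultaneously $x=tb$ for some $t\in S$, $b\in \mu_{\gamma}$. The goal is to conclude $\mu(x)>\gamma$.

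First I would translate the membership $x=as$ with $a\in\mu_{\gamma}$ into a lower bound for $(1\circ\mu)(x)$: since $x=as$, the definition of the product gives $(1\circ\mu)(x)=\bigvee_{x=yz}\{1(y)\wedge\mu(z)\}\ge 1(a)\wedge\mu(s)$ — wait, I have the factorization as $x=a\cdot s$, so I should pair it as $(1\circ\mu)(x)\ge 1(\,\cdot\,)\wedge\mu(\,\cdot\,)$ using the factor on the right; more carefully, from $x=tb$ with $b\in\mu_{\gamma}$ I get $(1\circ\mu)(x)\ge 1(t)\wedge\mu(b)=\mu(b)>\gamma$, and from $x=as$ with $a\in\mu_{\gamma}$ I get $(\mu\circ 1)(x)\ge\mu(a)\wedge 1(s)=\mu(a)>\gamma$. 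Now apply the defining inequality of the quasi-ideal:
\begin{equation*}
\mu(x)\vee\gamma \ge (\mu\circ 1)(x)\wedge(1\circ\mu)(x)\wedge\delta > \gamma\wedge\gamma\wedge\delta,
\end{equation*}
and since $\gamma<\delta$ this right-hand side equals $\gamma$ only if one of the first two factors is $\le\gamma$; but we have shown $(\mu\circ 1)(x)>\gamma$ and $(1\circ\mu)(x)>\gamma$, so $(\mu\circ 1)(x)\wedge(1\circ\mu)(x)\wedge\delta>\gamma$ (using $\gamma<\delta$ again). Hence $\mu(x)\vee\gamma>\gamma$, which forces $\mu(x)>\gamma$, i.e. $x\in\mu_{\gamma}$.

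The only subtle point — and the step I would be most careful about — is the strictness: one must argue that $\min\{u,v,\delta\}>\gamma$ whenever $u>\gamma$, $v>\gamma$ and $\delta>\gamma$, which is immediate but relies on $\gamma<\delta$ being a strict inequality (part of the standing hypothesis on $\gamma,\delta$), and then that $\mu(x)\vee\gamma>\gamma$ implies $\mu(x)>\gamma$ rather than merely $\mu(x)\ge\gamma$, which holds because $\max\{\mu(x),\gamma\}>\gamma$ is impossible when $\mu(x)\le\gamma$. A minor bookkeeping issue is making sure $\mu_{\gamma}$ is nonempty so that "$\mu_{\gamma}S\cap S\mu_{\gamma}$" is the right object to check; if $\mu_{\gamma}=\phi$ the statement is vacuous (the empty set is conventionally not required to be, or is trivially, a quasi-ideal depending on convention), so I would either assume $\mu_{\gamma}\ne\phi$ or note this case explicitly. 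No use of the left-identity laws or of the special relation $2\delta=1+\gamma$ is needed here; the argument is purely from the product structure and the defining inequality.
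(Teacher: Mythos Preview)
Your proposal is correct and follows essentially the same approach as the paper: take $x\in \mu_{\gamma}S\cap S\mu_{\gamma}$, use the two factorizations to bound $(\mu\circ 1)(x)$ and $(1\circ\mu)(x)$ from below by values strictly greater than $\gamma$, then apply the defining inequality together with $\gamma<\delta$ to conclude $\mu(x)\vee\gamma>\gamma$ and hence $\mu(x)>\gamma$. Your treatment of the strictness step and the nonemptiness caveat is actually more explicit than the paper's own proof.
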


\begin{proof}
To prove the required result, we need to show that $S\mu _{\gamma }\cap \mu
_{\gamma }S\subseteq $ $\mu _{\gamma }.$ Let $x\in S\mu _{\gamma }\cap \mu
_{\gamma }S.$ Then, $x\in S\mu _{\gamma }$ and $x\in \mu _{\gamma }S.$ So, $%
x=sa$ and $x=bt$ for some $a,b\in \mu _{\gamma }$ and $s,t\in S.$ Thus, $\mu
(a)>\gamma $ and $\mu (b)>\gamma .$ Since%
\begin{eqnarray*}
\left( 1\circ \mu \right) (x) &=&\bigvee\limits_{x=yz}\{1(y)\wedge \mu (z)\}
\\
&\geq &\{1(s)\wedge \mu (a)\},\text{ because }x=sa \\
&=&\mu (a).
\end{eqnarray*}%
Similarly, $\left( \mu \circ 1\right) (x)\geq \mu (b).$ Thus,%
\begin{eqnarray*}
\mu (x)\vee \gamma &\geq &\left( \mu \circ 1\right) (x)\wedge (1\circ \mu
)(x)\wedge \delta \\
&\geq &\mu (a)\wedge \mu (b)\wedge \delta \\
&>&\gamma \text{ because }\mu (a)>\gamma ,\text{ }\mu (b)>\gamma .
\end{eqnarray*}%
Which implies that $\mu (x)>\gamma .$ Thus, $x\in \mu _{\gamma }.$ Hence, $%
\mu _{\gamma }$ is a quasi-ideal of $S.$
\end{proof}

\begin{remark}
Every fuzzy quasi-ideal and $\left( \in ,\in \vee q\right) $-fuzzy
quasi-ideal of $S$ is an $\left( \in _{\gamma },\in _{\gamma }\vee q_{\delta
}\right) $-fuzzy quasi-ideal of $S$ but the converse is not true.
\end{remark}

\begin{example}
If we consider the LA-semigroup given in example 4. Then, the fuzzy subset $%
\mu $ defined by $\mu (1)=0,$ $\mu (2)=0.2,$ $\mu (3)=0.3,$ $\mu (4)=0$ is

(i) an $\left( \in _{0.1},\in _{0.1}\vee q_{0.2}\right) $-fuzzy quasi-ideal.

(ii) not an $\left( \in ,\in \vee q\right) $-fuzzy quasi-ideal$.$ Indeed $%
\left( \mu \circ 1\right) (2)=0.3=(1\circ \mu )(2)$ but $\mu (2)=0.2\ngeq
\left( \mu \circ 1\right) (2)\wedge (1\circ \mu )(2)\wedge 0.5$.

(iii) not fuzzy quasi-ideal. Because $\mu (2)=0.2\ngeq \left( \mu \circ
1\right) (2)\wedge (1\circ \mu )(2)=0.3.$
\end{example}

\begin{lemma}
A non-empty subset $Q$ of an LA-semigroup $S$ is a quasi-ideal of $S$ if and
only if the characteristic function $\chi _{Q}$ of $Q$ is an $\left( \in
_{\gamma },\in _{\gamma }\vee q_{\delta }\right) $-fuzzy quasi-ideal of $S.$
\end{lemma}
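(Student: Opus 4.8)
The plan is to prove both directions of the equivalence, mirroring the structure of the earlier characteristic-function theorems (Theorems 4, 15, etc.), but now working with the convolution-based definition of an $\left( \in _{\gamma },\in _{\gamma }\vee q_{\delta }\right) $-fuzzy quasi-ideal rather than a pointwise fuzzy-point condition. For the forward direction, suppose $Q$ is a quasi-ideal of $S$, i.e. $SQ\cap QS\subseteq Q$, and let $x\in S$ be arbitrary. I would split into the case $x\in Q$ and the case $x\notin Q$. If $x\in Q$, then $\chi_{Q}(x)=1$, so $\chi_{Q}(x)\vee\gamma = 1 \geq (\chi_Q\circ 1)(x)\wedge(1\circ\chi_Q)(x)\wedge\delta$ trivially. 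If $x\notin Q$, then $\chi_{Q}(x)=0$ and the inequality $\chi_{Q}(x)\vee\gamma = \gamma \geq (\chi_Q\circ 1)(x)\wedge(1\circ\chi_Q)(x)\wedge\delta$ must be established; it suffices to show that $(\chi_Q\circ 1)(x)\wedge(1\circ\chi_Q)(x)\leq\gamma$, and since both convolution values lie in $\{0,1\}$, the real content is to show they cannot both equal $1$. If $(1\circ\chi_Q)(x)=1$ then there exist $y,z$ with $x=yz$ and $\chi_Q(z)=1$, i.e. $z\in Q$, so $x=yz\in SQ$; symmetrically $(\chi_Q\circ 1)(x)=1$ forces $x\in QS$; hence $x\in SQ\cap QS\subseteq Q$, contradicting $x\notin Q$. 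So at least one of the two convolution values is $0\leq\gamma$, and the inequality holds.

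For the converse, assume $\chi_{Q}$ is an $\left( \in _{\gamma },\in _{\gamma }\vee q_{\delta }\right) $-fuzzy quasi-ideal and let $x\in SQ\cap QS$. Then $x=sa$ for some $s\in S$, $a\in Q$, and $x=bt$ for some $b\in Q$, $t\in S$. As in the computation in the proof of the preceding Theorem (the one showing $\mu_\gamma$ is a quasi-ideal), I compute $(1\circ\chi_Q)(x)=\bigvee_{x=yz}\{1(y)\wedge\chi_Q(z)\}\geq 1(s)\wedge\chi_Q(a)=\chi_Q(a)=1$, so $(1\circ\chi_Q)(x)=1$; similarly $(\chi_Q\circ 1)(x)\geq\chi_Q(b)=1$, so $(\chi_Q\circ 1)(x)=1$. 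The defining inequality then gives $\chi_Q(x)\vee\gamma\geq 1\wedge 1\wedge\delta=\delta>\gamma$, hence $\chi_Q(x)\vee\gamma>\gamma$, which forces $\chi_Q(x)>\gamma\geq 0$; since $\chi_Q$ takes only the values $0$ and $1$, we get $\chi_Q(x)=1$, i.e. $x\in Q$. Therefore $SQ\cap QS\subseteq Q$ and $Q$ is a quasi-ideal.

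The only mildly delicate point — and the natural place to be careful — is the bookkeeping around the convolution product when the element $x$ is not expressible as a product at all: in that case the convolutions are $0$ by definition, so the forward inequality is immediate, and in the converse direction $x\in SQ\cap QS$ guarantees such factorizations exist, so the issue does not arise there. Everything else is the routine observation that a characteristic function is $\{0,1\}$-valued together with the standing hypothesis $\gamma<\delta$ (so that $\delta>\gamma$ and the strict inequality can be extracted). I would present the argument compactly in two short paragraphs, one per direction, referencing the convolution computation already carried out in the proof above to avoid repetition.
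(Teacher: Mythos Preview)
Your proposal is correct and follows essentially the same approach as the paper's proof: both directions proceed by the same case split on whether $x\in Q$, using that the convolution values $(\chi_Q\circ 1)(x)$ and $(1\circ\chi_Q)(x)$ lie in $\{0,1\}$ and detect membership in $QS$ and $SQ$ respectively, together with $\gamma<\delta$ to extract $\chi_Q(x)=1$ in the converse. The only cosmetic difference is that in the forward direction for $x\notin Q$ you argue by contradiction (both convolutions equal $1$ would force $x\in SQ\cap QS\subseteq Q$), whereas the paper states the contrapositive directly ($x\notin Q\Rightarrow x\notin SQ$ or $x\notin QS$, hence one convolution vanishes).
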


\begin{proof}
Suppose that $Q$ is a quasi-ideal of $S$ and $\chi _{Q}$ is the
characteristic function of $Q.$ If $x\notin Q,$ then $x\notin SQ$ or $%
x\notin QS.$ Thus, $\left( 1\circ \chi _{Q}\right) (x)=0$ or $\left( \chi
_{Q}\circ 1\right) (x)=0$. So, $\left( 1\circ \chi _{Q}\right) (x)\wedge
\left( \chi _{Q}\circ 1\right) (x)\wedge \delta =0\leq \chi _{Q}(x)\vee
\gamma .$ If $x\in Q,$ then $\chi _{Q}(x)\vee \gamma =1\vee \gamma =1\geq
\left( 1\circ \chi _{Q}\right) (x)\wedge \left( \chi _{Q}\circ 1\right)
(x)\wedge \delta .$ Thus, $\chi _{Q}$ is an $\left( \in _{\gamma },\in
_{\gamma }\vee q_{\delta }\right) $-fuzzy quasi-ideal of $S.$ Conversely,
assume that $\chi _{Q}$ is an $\left( \in _{\gamma },\in _{\gamma }\vee
q_{\delta }\right) $-fuzzy quasi-ideal of $S.$ Let $x\in QS\cap SQ,$ then
there exists $s,t\in S$ and $a,b\in Q,$ such that, $x=as$ and $x=tb.$ Now,%
\begin{eqnarray*}
\left( \chi _{Q}\circ 1\right) (x) &=&\bigvee\limits_{x=yz}\{\chi
_{Q}(y)\wedge 1(z)\} \\
&\geq &\{\chi _{Q}(a)\wedge 1(s)\}\text{ because }x=as \\
&=&1\wedge 1 \\
&=&1.
\end{eqnarray*}%
So, $\left( \chi _{Q}\circ 1\right) (x)=1.$ Similarly, $\left( 1\circ \chi
_{Q}\right) (x)=1.$ Hence, $\left( \chi _{Q}\right) (x)\vee \gamma \geq
\left( 1\circ \chi _{Q}\right) (x)\wedge \left( \chi _{Q}\circ 1\right)
(x)\wedge \delta =1\wedge 1\wedge \delta =\delta .$ Thus, $\left( \chi
_{Q}\right) (x)\vee \gamma \geq \delta .$ This implies that $\left( \chi
_{Q}\right) (x)\geq \delta $ because $\gamma <\delta .$ So, $\left( \chi
_{Q}\right) (x)=1.$ Hence, $x\in Q.$ Thus, $QS\cap SQ\subseteq Q.$ Hence, $Q$
is quasi-ideal of $S.$
\end{proof}

\begin{theorem}
Every $\left( \in _{\gamma },\in _{\gamma }\vee q_{\delta }\right) $-fuzzy
left(right) ideal $\mu $ of $S$ is an $\left( \in _{\gamma },\in _{\gamma
}\vee q_{\delta }\right) $-fuzzy quasi-ideal of $S.$
\end{theorem}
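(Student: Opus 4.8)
The plan is to deduce this directly from the pointwise characterization of $(\in_\gamma,\in_\gamma\vee q_\delta)$-fuzzy left(right) ideals proved above, together with the definition of the product $\circ$ of fuzzy subsets. Recall that $\mu$ is an $(\in_\gamma,\in_\gamma\vee q_\delta)$-fuzzy left ideal of $S$ precisely when $\mu(sa)\vee\gamma\geq\mu(a)\wedge\delta$ for all $a,s\in S$, and that $\mu$ is an $(\in_\gamma,\in_\gamma\vee q_\delta)$-fuzzy quasi-ideal of $S$ precisely when $\mu(x)\vee\gamma\geq(\mu\circ1)(x)\wedge(1\circ\mu)(x)\wedge\delta$ for all $x\in S$. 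So, assuming $\mu$ is an $(\in_\gamma,\in_\gamma\vee q_\delta)$-fuzzy left ideal, I only need to verify this last inequality at an arbitrary $x\in S$.

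First I would dispose of the case in which $x$ has no factorization $x=yz$ with $y,z\in S$: then $(\mu\circ1)(x)=0=(1\circ\mu)(x)$ by the definition of $\circ$, so the right-hand side is $0$ and the inequality is immediate. Next, suppose $x=yz$ for some $y,z\in S$. For every such factorization the left-ideal inequality gives $\mu(x)\vee\gamma=\mu(yz)\vee\gamma\geq\mu(z)\wedge\delta$. Since the left-hand side is the same for all factorizations, $\mu(x)\vee\gamma$ is an upper bound of the set $\{\mu(z)\wedge\delta:x=yz\}$, hence
\[
\mu(x)\vee\gamma\ \geq\ \bigvee_{x=yz}\bigl(\mu(z)\wedge\delta\bigr)\ =\ \Bigl(\bigvee_{x=yz}\mu(z)\Bigr)\wedge\delta\ =\ (1\circ\mu)(x)\wedge\delta,
\]
where the middle equality uses that $\delta$ is a constant and $\wedge$ distributes over arbitrary $\vee$ in $[0,1]$. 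Therefore $\mu(x)\vee\gamma\geq(1\circ\mu)(x)\wedge\delta\geq(\mu\circ1)(x)\wedge(1\circ\mu)(x)\wedge\delta$, which is exactly the quasi-ideal condition.

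The right-ideal case is handled symmetrically: from $\mu(yz)\vee\gamma\geq\mu(y)\wedge\delta$ for every factorization $x=yz$ one gets $\mu(x)\vee\gamma\geq(\mu\circ1)(x)\wedge\delta\geq(\mu\circ1)(x)\wedge(1\circ\mu)(x)\wedge\delta$. I do not anticipate any real obstacle here; the only points needing a little care are remembering to treat separately the $x$ that are not products, and justifying the passage from ``the inequality holds for each factorization'' to ``the inequality holds with the supremum on the right'', i.e.\ the interchange of $\wedge\delta$ with $\bigvee$. If one prefers to avoid the infinitary distributive law, one can instead split into the subcases $(1\circ\mu)(x)\leq\delta$ and $(1\circ\mu)(x)>\delta$ and argue directly, which is elementary.
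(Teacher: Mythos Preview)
Your proposal is correct and follows essentially the same route as the paper: both arguments use the characterization $\mu(sa)\vee\gamma\geq\mu(a)\wedge\delta$ to bound $(1\circ\mu)(x)\wedge\delta$ above by $\mu(x)\vee\gamma$, and then observe that this already dominates $(\mu\circ1)(x)\wedge(1\circ\mu)(x)\wedge\delta$. Your version is slightly more explicit in handling the case where $x$ has no factorization and in justifying the distributivity $\bigl(\bigvee_{x=yz}\mu(z)\bigr)\wedge\delta=\bigvee_{x=yz}(\mu(z)\wedge\delta)$, but the underlying argument is the same.
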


\begin{proof}
Let $a\in S,$ then%
\begin{equation*}
\left( 1\circ \mu \right) (a)=\bigvee\limits_{a=xy}\{1(x)\wedge \mu
(y)\}=\bigvee\limits_{a=xy}\mu (y).
\end{equation*}%
This implies that%
\begin{eqnarray*}
\left( 1\circ \mu \right) (a)\wedge \delta &=&\left(
\bigvee\limits_{a=xy}\mu (y)\right) \wedge \delta \\
&=&\bigvee\limits_{a=xy}\{\mu (y)\wedge \delta \} \\
&\leq &\bigvee\limits_{a=xy}\{\mu (xy)\vee \gamma \}\text{ (because }\mu 
\text{ is an }\left( \in _{\gamma },\in _{\gamma }\vee q_{\delta }\right) 
\text{-fuzzy left ideal of }S.\text{)} \\
&=&\mu (a)\vee \gamma .
\end{eqnarray*}%
Thus, $\left( 1\circ \mu \right) (a)\wedge \delta \leq \mu (a)\vee \gamma .$
Hence, $\mu (a)\vee \gamma \geq \left( 1\circ \mu \right) (a)\wedge \delta
\geq \left( \mu \circ 1\right) (a)\wedge (1\circ \mu )(a)\wedge \delta .$
Thus, $\mu $ is an $\left( \in _{\gamma },\in _{\gamma }\vee q_{\delta
}\right) $-fuzzy quasi-ideal of $S.$ (Similarly, we can prove for right
ideal).
\end{proof}

\end{document}